\pgfplotsset{compat=1.13}
\newcommand{\mylabel}[2]{#2\def\@currentlabel{#2}\label{#1}}
\newtheorem{theorem}{Theorem}[section] 
\theoremstyle{definition} 
\newtheorem{definition}[theorem]{Definition} 
\newtheorem{lemma}[theorem]{Lemma} 
\theoremstyle{definition}  
\newtheorem{remark}[theorem]{Remark}
\newtheorem{corollary}[theorem]{Corollary} 
\theoremstyle{definition}
\theoremstyle{definition}
\theoremstyle{definition}
\newcommand{\be}{\begin{equation}}
\newcommand{\ee}{\end{equation}}
\newcommand{\bea}{\begin{eqnarray}}
\newcommand{\eea}{\end{eqnarray}}
\newcommand{\beann}{\begin{eqnarray*}}
	\newcommand{\eeann}{\end{eqnarray*}}
\newcommand{\benn}{\begin{equation*}}
\newcommand{\eenn}{\end{equation*}}
\newcommand{\cC}{{\mathcal C}}  
\def\txtd{{\textnormal{d}}}
\def\txte{{\textnormal{e}}}
\def\txti{{\textnormal{i}}}
\def\txtD{{\textnormal{D}}}
\begin{document}
	
\author{Christian Kuehn\thanks{CONTACT Christian Kuehn. 
			Email: ckuehn@ma.tum.de } and Christian M\"unch\thanks{CONTACT Christian M\"unch. Email: christian.muench@ma.tum.de}
		}

\title{Duck Traps: Two-dimensional Critical Manifolds in Planar Systems}	

\maketitle

\begin{abstract}
	In this work we consider two-dimensional critical manifolds in planar fast-slow systems near fold and so-called canard (=``duck'') points. These higher-dimension, and lower-codimension, situation is directly motivated by the case of hysteresis operators limiting onto fast-slow systems as well as by systems with constraints. We use geometric desingularization via blow-up to investigate two situations for the slow flow: generic fold (or jump) points, and canards in one-parameter families. We directly prove that the fold case is analogous to the classical fold involving a one-dimensional critical manifold. However, for the canard case, considerable differences and difficulties appear. Orbits can get trapped in the two-dimensional manifold after a canard-like passage thereby preventing small-amplitude oscillations generated by the singular Hopf bifurcation occurring in the classical canard case, as well as certain jump escapes.
\end{abstract}
	
{{\it Keywords:} Multiple time scale dynamics; fast-slow systems; fold point; canard; piecewise smooth
	system; blow-up method}
	
\section{Introduction}
\label{Sec:Intro}

In this introductory section, we are going to explain the main concepts of fast-slow systems, 
non-hyperbolicity and the blow-up technique. Furthermore, we are going to motivate codimension zero 
critical manifolds as considered in this work arising from hysteresis operators. Then we present 
our main results on an informal level. A typical planar fast-slow system takes the form
\begin{alignat}{2}
\varepsilon \frac{\txtd x}{\txtd\tau} = \varepsilon \dot{x} &= 
f(x,y,\varepsilon),\label{Eq:fastslowIntro_x}\\
\frac{\txtd y}{\txtd\tau} = \dot{y} &= g(x,y,\varepsilon),
\label{Eq:fastslowIntro_y}
\end{alignat}
for $x\in\mathbb{R}^m,y\in\mathbb{R}^n$, with a small parameter $0<\varepsilon\ll 1$ and 
given initial conditions. In the classical case, $f,g$ are assumed to be sufficiently 
smooth functions~\cite{Fenichel4,Jones,KuehnBook}. Later on, we are going to restrict 
attention to the piecewise smooth case. Since $\varepsilon$ is small, one can consider 
first the singular limit $\varepsilon=0$ in \eqref{Eq:fastslowIntro_x}--\eqref{Eq:fastslowIntro_y}, 
which gives the reduced system (or slow subsystem)
\begin{alignat}{2}
0 &= f(x,y,0),\label{Eq:ReducedfastslowIntro_x}\\
\dot{y} &= g(x,y,0),\label{Eq:ReducedfastslowIntro_y}
\end{alignat}
which is a differential-algebraic equation. Here, the critical manifold 
$$\mathcal{C}_0:=\{(x,y)\in \mathbb{R}^{m+n}:\,f(x,y,0)=0\}$$\index{a@$\mathcal{C}_0$} 
generically has codimension $m$, see (c) in Figure~\ref{Fig:blow-up}.
Substituting the fast time scale $t:=\tau/\varepsilon$ 
into~\eqref{Eq:fastslowIntro_x}--\eqref{Eq:fastslowIntro_y} yields the system
\begin{alignat}{2}
x' &= 
f(x,y,\varepsilon),\label{Eq:fastsubIntro_x}\\
y' &= \varepsilon g(x,y,\varepsilon),\label{Eq:fastsubIntro_y}
\end{alignat}
and this time the limit $\varepsilon=0$ takes the form of a parametrized differential 
equation, the layer problem (or fast subsystem)
\begin{alignat}{2}
x' &= f(x,y,0),\label{Eq:layerIntro_x}\\
y' &= 0.\label{Eq:layerIntro_y}
\end{alignat}
Intuitively, we expect that a singular limit of solutions 
for \eqref{Eq:fastslowIntro_x}--\eqref{Eq:fastslowIntro_y}, 
or equivalently \eqref{Eq:fastsubIntro_x}--\eqref{Eq:fastsubIntro_y}, 
as $\varepsilon\rightarrow 0$ should qualitatively behave according to some 
weighted mixture between 
\eqref{Eq:ReducedfastslowIntro_x}--\eqref{Eq:ReducedfastslowIntro_y} 
and \eqref{Eq:layerIntro_x}--\eqref{Eq:layerIntro_y}.
For sufficiently smooth functions $f,g$, this is indeed the case.
In particular, for sufficiently small $\varepsilon>0$ and outside 
$\mathcal{O}(\varepsilon)$-distance of the critical manifold $\{f(x,y,0)=0\}$, 
the solution of \eqref{Eq:fastsubIntro_x}--\eqref{Eq:fastsubIntro_y} is well 
approximated by the layer problem, i.e., the $x$-components evolve fast while 
the $y$-components remain almost constant.
A subset $M\subset\cC_0$ is called normally hyperbolic,
if $\txtD_x f(p,0)$ is a hyperbolic matrix for all 
$p\in M$~\cite{Fenichel4,Jones,KuehnBook}.
In particular, an equilibrium $(x_*,y_*)$ 
of \eqref{Eq:fastsubIntro_x}--\eqref{Eq:fastsubIntro_y} in $\cC_0$ is 
normally hyperbolic if $\txtD_x f(x_*,y_*,0)$ has no eigenvalues with zero real 
parts. With this definition, in $\mathcal{O}(\varepsilon)$-distance of any normally 
hyperbolic compact subset of the critical manifold, Fenichel theory yields the 
existence of a perturbed invariant slow manifold, such that the 
system \eqref{Eq:fastsubIntro_x}--\eqref{Eq:fastsubIntro_y} restricted to this 
manifold behaves similar to the reduced flow \cite{Fenichel4}, see 
also \cite{Jones,KuehnBook,WigginsIM} for Fenichel theory and its 
broader context. Additionally, the slow manifold has the same stability properties 
as the critical manifold, i.e., it is attracting/repelling for solutions 
of \eqref{Eq:fastsubIntro_x}--\eqref{Eq:fastsubIntro_y} for different directions, in 
the same way as the original subset $\cC_0$ was for the layer 
problem \eqref{Eq:layerIntro_x}--\eqref{Eq:layerIntro_y}.\medskip

The situation gets more complicated if the critical manifold contains non-hyperbolic 
points. A very powerful tool to analyze the dynamics close to such points is the 
so-called blow-up technique used in several contexts in 
fast-slow systems~\cite{Dumortier1,DumortierRoussarie,KruSzm3,KuehnBook}.
The idea is to transform the (extended) fast-slow 
system \eqref{Eq:fastsubIntro_x}--\eqref{Eq:fastsubIntro_y} given by
\begin{alignat*}{2}
x' &= f(x,y,\varepsilon),\\
y' &= \varepsilon g(x,y,\varepsilon),\\
\varepsilon' &=0,
\end{alignat*}
in such a way, that a non-hyperbolic equilibrium point $(x_*,y_*,0)$ is blown-up, 
e.g., to a whole sphere. Frequently, the resulting desingularized system contains 
only (partially) hyperbolic equilibria, which improves the situation dynamically, i.e.,
we have gained hyperbolicity. For illustration, let us consider the case $m=1=n$ (as 
applied in this work, see also \cite{KruSzm3,BenoitCallotDienerDiener,DumortierRoussarie}) 
and the polar blow-up of the system 
in case of a canard point. This system depends on an additional parameter $\lambda$, which
we can also append via $\lambda'=0$, and one is interested in the local dynamics around 
the equilibrium $(0,0,0,0)$ (the canard point) for small $\varepsilon$ and upon variation 
of $\lambda$. In particular, the system is blown-up to $(\bar{x},\bar{y},\bar{\varepsilon},
\bar{\lambda},\bar{r})\in\mathbb{S}^2\times[-\mu,\mu]\times[0,\rho]$ by the transformation 
$\Psi$, which is defined by 
\begin{equation}
\label{Blow_up_Psi}
x=\bar{r}\bar{x},\quad y=\bar{r}^2\bar{y},\quad 
\varepsilon=\bar{r}^2\bar{\varepsilon},\quad \lambda=\bar{r}\bar{\lambda}.
\index{b@$\Psi$| {\eqref{Blow_up_Psi}, }}
\end{equation}
Here, for $\bar{r}=0$ and all $\bar{\lambda}\in [-\mu,\mu]$, $\Psi$ maps the sphere
$\mathbb{S}^2=\{\bar{x}^2+\bar{y}^2+\bar{\varepsilon}^2=1\}$ \index{c@$\mathbb{S}^2$} to 
the canard point $(0,0,0,0)$, see (a) in Figure~\ref{Fig:blow-up}.

\begin{figure}
	[htbp]
	\centering
	\subfloat[]
	{
		\begin{overpic}[width=0.3\textwidth]{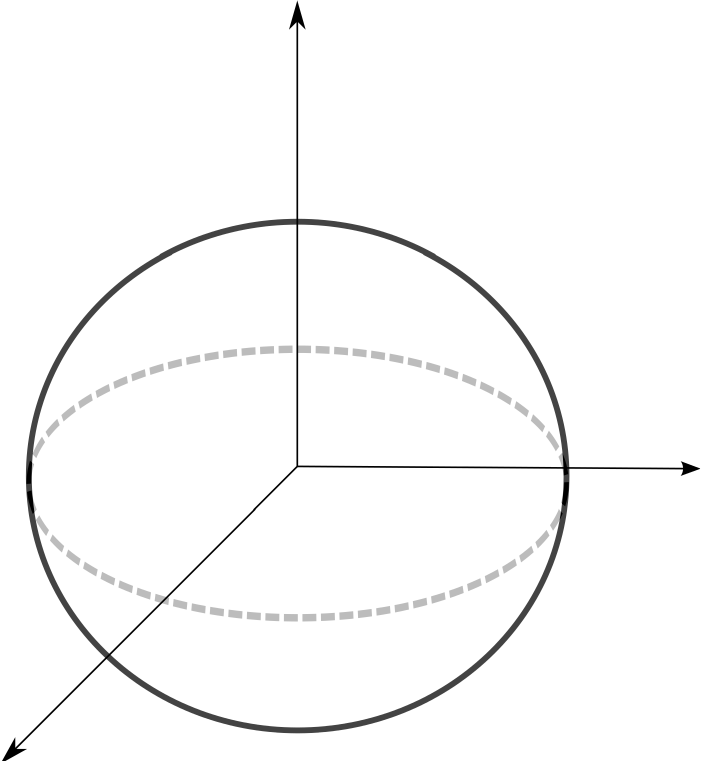}
			\put(42,95){\scalebox{1.0}{$\bar{\varepsilon}$}}
			\put(-5,3){\scalebox{1.0}{$\bar{x}$}}
			\put(88,30){\scalebox{1.0}{$\bar{y}$}}
			\put(68,10){\scalebox{1.0}{$\mathbb{S}^2$}}
		\end{overpic}
	}
	\subfloat[]
	{
		\begin{overpic}[width=0.3\textwidth]{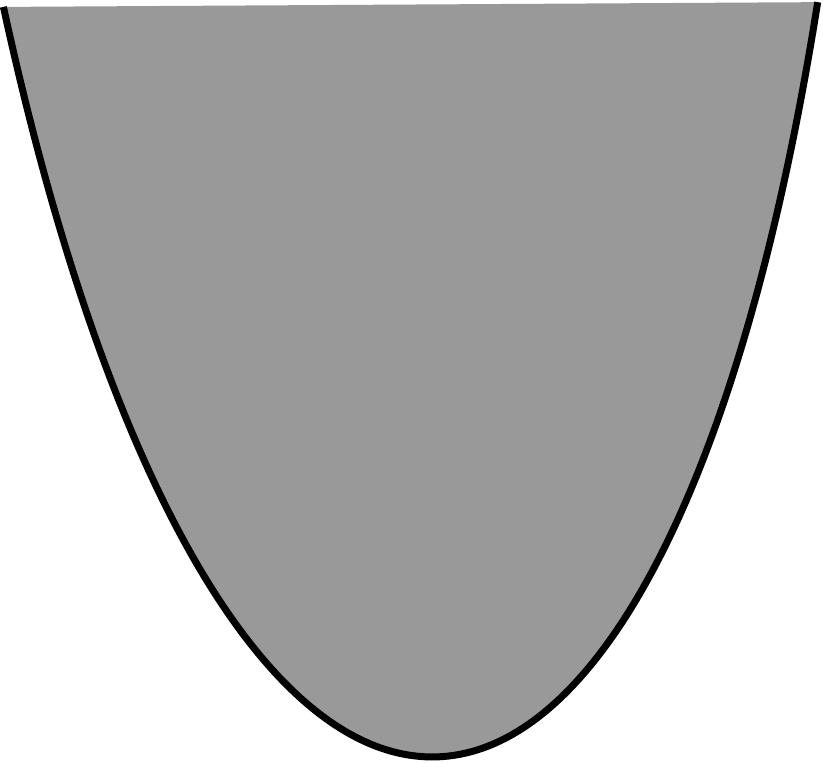}
			\put(50,40){\scalebox{1.0}{$\mathcal{C}_0$}}
		\end{overpic}
	}
	\subfloat[]
	{
		\begin{overpic}[width=0.3\textwidth]{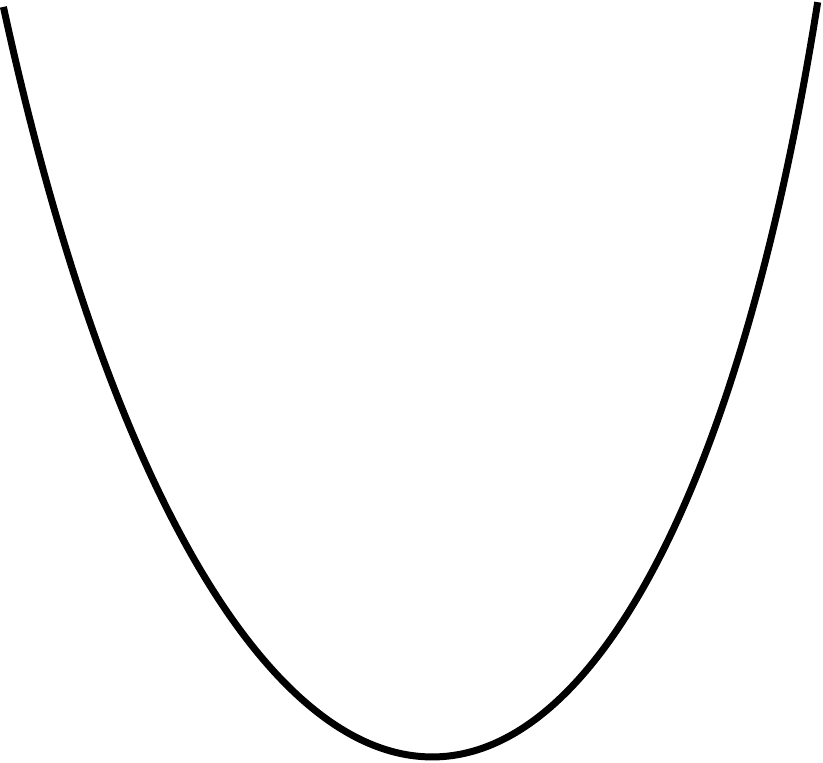}
			\put(10,25){\scalebox{1.0}{$\mathcal{C}_0$}}
		\end{overpic}
	}
	\caption{
		\label{Fig:blow-up}
		(a): Projection to $(\bar{x},\bar{y},\bar{\varepsilon})$-coordinates of the 
		reduced system $\bar{r}=0$ of the weighted polar blow-up $\Psi$. The canard 
		point $(0,0,0,0)$ is blown-up to $\mathbb{S}^2\times[-\mu,\mu]$.
		(b): Example of a critical manifold $\mathcal{C}_0$ (gray) with codimension 
		zero in the plane $m=1=n$. 
		(c): Example of a critical manifold $\mathcal{C}_0$ with codimension one in 
		the plane $m=1=n$.}
\end{figure}	

Typically, the dynamics for the blown-up system is analyzed in different (overlapping) 
directional charts. Fenichel theory can be applied in the regions, which correspond to 
parts far enough away from any non-hyperbolic point 
of \eqref{Eq:fastsubIntro_x}--\eqref{Eq:fastsubIntro_y}, and here the dynamics is 
analyzed in directional charts corresponding to $\bar{y},\bar{x}=\pm 1$, depending on 
the signs of $x,y$ in these areas. For the canard point, we need one chart $K_1$ for 
the direction $\overline{y}=1$. The corresponding transformation map $\Phi_1$ is 
determined by 
\begin{equation}
\label{Blow_up_Phi1}
x=r_1x_1,\quad y=r_1^2,\quad \varepsilon = r_1^2\varepsilon_1,\quad \lambda = r_1\lambda_1,
\index{d@$\Phi_1$| {\eqref{Blow_up_Phi1}, }}
\end{equation}
with domain
$$
V_1= (-x_{1,0},x_{1,0})\times [-\rho,\rho] \times [0,1) \times (-\mu,\mu).
\index{e@$V_1$}
$$
Here, $x_{1,0}>0$ is chosen sufficiently large, and $\rho >0,\mu >0$ are chosen sufficiently 
small. That is, $\varepsilon \in [0,\varepsilon_0)$ with $\varepsilon_0 = \rho^2$.
Chart $K_1$ is necessary to study \eqref{Eq:blowup_x}--\eqref{Eq:blowup_lamb} in sets of 
the form $\{(x,y):\, y\in (\varepsilon,\rho^2],\ x\in (-x_{1,0}\sqrt{y},x_{1,0}\sqrt{y})\}$.
For $\varepsilon\in (0,\rho^2]$, we define 
$$
V_{1,\varepsilon}:=\{(x_1,r_1,\varepsilon_1,\lambda_1)\in V_1:\, r_1^2\varepsilon_1=\varepsilon\}.
\index{f@$V_{1,\varepsilon}$}
$$
The dynamics close to the equilibrium point is usually analyzed with help of a rescaling (or 
classical) chart $K_2$, that is, with the directional chart corresponding to $\bar{\varepsilon}=1$.
In the canard case, the corresponding transformation map $\Phi_2$ is determined by
\begin{equation}
\label{Blow_up_Phi2}
x=r_2x_2,\quad y=r_2^2y_2,\quad \varepsilon = r_2^2,\quad \lambda = r_2\lambda_2,
\index{g@$\Phi_2$| {\eqref{Blow_up_Phi2}, }}
\end{equation}
with domain
$$
V_2= D\times [0,\rho] \times (-\mu,\mu).
\index{h@$V_2$}
$$
Here, $D$ is a sufficiently large disc with center $(0,0)$.
Chart $K_2$ is applied to study the system in a neighbourhood of the origin of 
size $\mathcal{O}(\sqrt{\varepsilon})$ in $x$-direction and $\mathcal{O}(\varepsilon)$ 
in $y$-direction. For $\varepsilon\in (0,\rho^2]$, we define 
$$
V_{2,\varepsilon}:=\{(x_2,y_2,r_2,\lambda_2)\in V_2:\, r_2^2=\varepsilon\}.
\index{i@$V_{2,\varepsilon}$}
$$
The analysis will be carried out in these sets in the blow-up, and it is 
helpful to already introduce two other sets (see also Figure~\ref{Fig:VandVeps}):

\begin{definition}\label{Def:nbhd}
	For $\varepsilon\in (0,\rho^2]$ and $P_{(x,y)}$ the projection to $(x,y)$-coordinates we 
	define the set
	$$
	V_{\varepsilon}:=P_{(x,y)}(\Phi_1(V_{1,\varepsilon}))\cup P_{(x,y)}(\Phi_2(V_{2,\varepsilon})).
	$$
	Moreover, we define $V:=V_{\varepsilon_0}=V_{\rho^2}$.
	\index{j@$V_\varepsilon$ and $V$| {Definition~\ref{Def:nbhd}, }}
\end{definition}

\begin{figure}
	[htbp]
	\centering
	\subfloat[]
	{
		\begin{overpic}[width=0.41\textwidth]{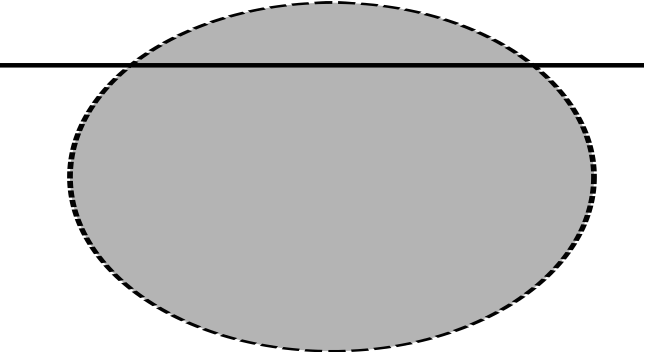}
			\put(85,48){\scalebox{0.7}{$P_{(x,y)}(\Phi_1(V_{1,\rho^2}))$}}
			\put(30,20){\scalebox{0.7}{$P_{(x,y)}(\Phi_2(V_{2,\rho^2}))$}}
		\end{overpic}
	}
	\qquad \qquad
	\subfloat[]
	{
		\begin{overpic}[width=0.41\textwidth]{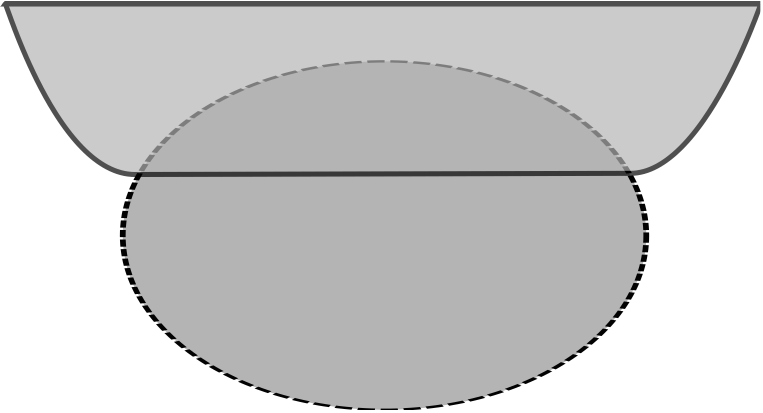}
			\put(50,49){\scalebox{0.7}{$P_{(x,y)}(\Phi_1(V_{1,\varepsilon}))$}}
			\put(35,20){\scalebox{0.7}{$P_{(x,y)}(\Phi_2(V_{2,\varepsilon}))$}}
		\end{overpic}
	}
	\caption{
		\label{Fig:VandVeps}
		(a): Sketch of $V=V_{\rho^2}$. In particular, $V$ is the union of the 
		line $P_{(x,y)}(\Phi_1(V_{1,\rho^2}))=\{(x,y):\, y=\rho^2,\ x\in (-x_{1,0}\sqrt{y},
		x_{1,0}\sqrt{y})\}$, and the ellipse $P_{(x,y)}(\Phi_2(V_{2,\rho^2}))$ (dashed gray) 
		which has width of order $\mathcal{O}(\rho)$ and height of order $\mathcal{O}(\rho^2)$.
		(b): Sketch of $V_{\varepsilon}$ for $\varepsilon\in (0, \rho^2)$. $V_{\varepsilon}$ 
		is the union of $P_{(x,y)}(\Phi_1(V_{1,\rho^2}))=\{(x,y):\, y\in(\varepsilon,\rho^2],
		\ x\in (-x_{1,0}\sqrt{y},x_{1,0}\sqrt{y})\}$ (solid light gray) and the ellipse 
		$P_{(x,y)}(\Phi_2(V_{2,\rho^2}))$ (dashed dark gray) which has width of order 
		$\mathcal{O}(\sqrt{\varepsilon})$ and height of order $\mathcal{O}(\varepsilon)$.
	}
\end{figure}

We also remark that for some problems, it 
requires more than one blow-up until all non-hyperbolic points (arising during 
the iterated blow-ups) have gained enough hyperbolicity. For the results in this work, one 
weighted polar blow-up as outlined above will be sufficient.\medskip

Different from the classical assumption, we analyze fast-slow systems with critical manifold 
of codimension zero for $m=1=n$, i.e. $\mathcal{C}_0$ has dimension two, see (b) in 
Figure~\ref{Fig:blow-up}. This implies that the non-linearity $f$ is only piecewise smooth, yet
our case is substantially different from other piecewise smooth fold 
cases (see e.g.~\cite{Desrochesetal1,KuehnScaleSN,RobertsGlendinning}), where the critical manifold
is less regular but still of dimension one. For our case of a dimension two critical manifold, 
we have to carefully analyze the (local) dynamics of 
system \eqref{Eq:fastsubIntro_x}--\eqref{Eq:fastsubIntro_y} on either side of the particular 
manifold where $f$ is not differentiable. The equilibrium point, which is non-hyperbolic for 
the smooth system, will be located in the separating manifold for the two smooth
regions. It turns out, that the same blow-up as for the smooth system can yield the desired 
results, but interesting additional and novel phenomena are observed.

In particular, the curvature of the function $g$, which determines the slow flow, is crucial 
now. More precisely, if $x\mapsto g(x,y,0)$, $(x,y)\in \mathcal{C}_0$ is not one-to-one, then 
the fast-slow system has several equilibria within $\mathcal{C}_0$. Hence, different from the 
classical case, higher order terms of $f$ and $g$ become relevant for the local analysis around 
the non-hyperbolic equilibrium. Fast-slow systems with critical manifold of codimension zero 
are relevant in several applications. Amongst others, the coupling of (systems) of ordinary 
differential equations (ODEs) or partial differential equations (PDEs) and a hysteresis operator 
are often approximated by the singular limit of fast-slow systems, see 
e.g.~\cite{KuehnMuench,MielkeRoubicek,Mielke3}. A particular subclass of interesting scalar 
hysteresis operators are so-called generalized play operators \cite{BrokateSprekels,Visintin}.
Given two increasing and (piecewise) smooth functions
\benn
L,U:\mathbb{R}\rightarrow \mathbb{R}, \qquad L<U, 
\eenn
the corresponding generalized play operator $\mathcal{P}$ maps time-continuous functions 
$y=y(t)$ to time-continuous functions $\mathcal{P}[y]$. Here, $\mathcal{P}[y]$ remains 
constant while $(y,\mathcal{P}[y])$ is located between the graphs $(y,U(y))$ and $(y,L(y))$. 
Once $\mathcal{P}[y]=L(y)$, so that the right curve $(y,L(y))$ is touched by $(y,\mathcal{P}[y])$ 
in phase space, then $\mathcal{P}[y]$ increases according to $\mathcal{P}[y]=L(y)$ as long 
as $y$ is monotone increasing. Similarly, $\mathcal{P}[y]=U(y)$ holds as long as $y$ is decreasing 
once $(y,\mathcal{P}[y])$ touches the left graph $(y,U(y))$. In other words, $\mathcal{P}[y]$ 
remains constant if located strictly between the graphs of $L$ and $U$, and otherwise moves on 
the graphs, see (a) in Figure~\ref{Fig:play}. 

\begin{figure}
	[htbp]
	\centering
	\subfloat[]
	{
		\begin{overpic}[width=0.4\textwidth]{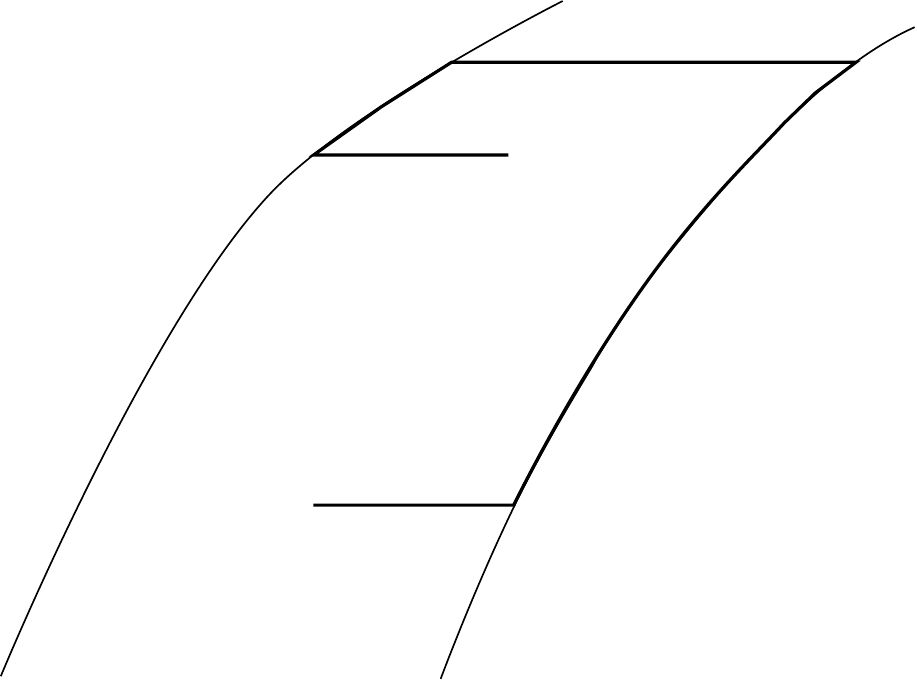}
			\put(4,2){\scalebox{1.0}{$U$}}
			\put(51,2){\scalebox{1.0}{$L$}}
			\put(35,19){
				\begin{tikzpicture}
				\draw[->] (0,0) -- (0.001,0);
				\end{tikzpicture}}
			\put(65,39.7){
				\begin{tikzpicture}
				\draw[->] (0,0) -- (0.001,0.002);
				\end{tikzpicture}}
			\put(65,67.4){
				\begin{tikzpicture}
				\draw[->] (0,0) -- (-0.001,0);
				\end{tikzpicture}}
			\put(38.5,62.5){
				\begin{tikzpicture}
				\draw[->] (0,0) -- (-0.001,-0.0005);
				\end{tikzpicture}}
			\put(42,57.1){
				\begin{tikzpicture}
				\draw[->] (0,0) -- (0.001,0);
				\end{tikzpicture}}
		\end{overpic}
	}
	\subfloat[]
	{
		\begin{overpic}[width=0.4\textwidth]{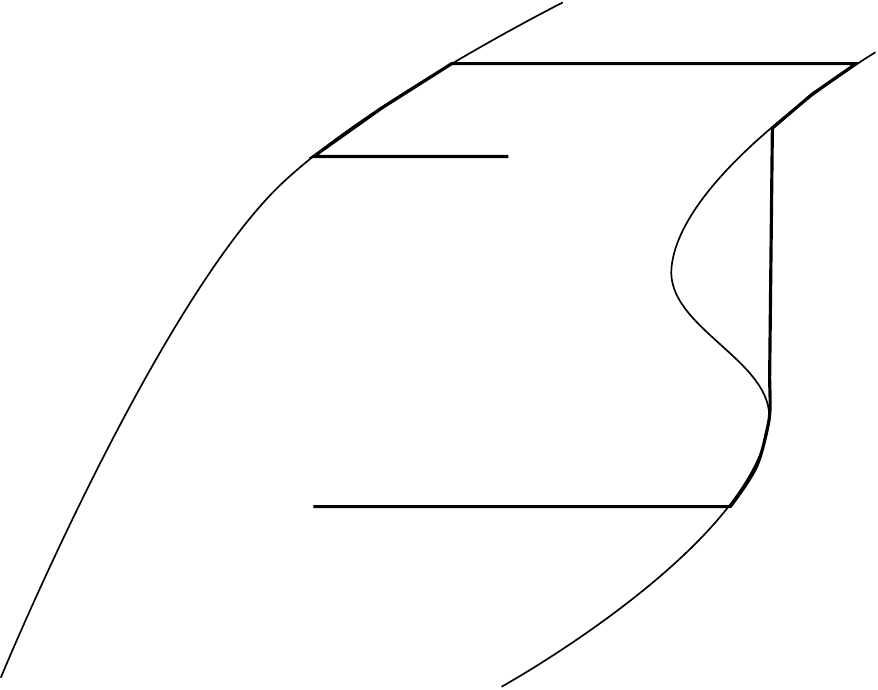}
			\put(4,2){\scalebox{1.0}{$U$}}
			\put(51,2){\scalebox{1.0}{$L$}}
			\put(35,20.58){
				\begin{tikzpicture}
				\draw[->] (0,0) -- (0.001,0);
				\end{tikzpicture}}
			\put(84,26.5){
				\begin{tikzpicture}
				\draw[->] (0,0) -- (0.001,0.003);
				\end{tikzpicture}}
			\put(85,42){
				\begin{tikzpicture}
				\draw[->>] (0,0) -- (0,0.2);
				\end{tikzpicture}}
			\put(89,67.1){
				\begin{tikzpicture}
				\draw[->] (0,0) -- (0.001,0.001);
				\end{tikzpicture}}
			\put(65,71.3){
				\begin{tikzpicture}
				\draw[->] (0,0) -- (-0.001,0);
				\end{tikzpicture}}
			\put(38.5,64.5){
				\begin{tikzpicture}
				\draw[->] (0,0) -- (-0.001,-0.001);
				\end{tikzpicture}}
			\put(42,60.5){
				\begin{tikzpicture}
				\draw[->] (0,0) -- (0.001,0);
				\end{tikzpicture}}
		\end{overpic}
	}
	\caption{
		\label{Fig:play}	
		(a): Typical trajectory $(y,\mathcal{P}[y])$ in phase space of a classical 
		generalized play operator, which is defined by two monotone increasing curves 
		$L$ and $U$.
		(b): Situation if one boundary curve (here $L$) contains a fold point. In 
		particular, $L$ is not monotone increasing. The solution behaves as a trajectory 
		of a classical generalized play operator until it reaches the fold point. Then it 
		jumps up, similar as trajectories of a relay operator, until it hits the graph of 
		$L$ again. Afterward, the solution continues as a trajectory of a classical 
		generalized play operator.
	}
\end{figure}

Finally, $L(y)\leq \mathcal{P}[y] \leq U(y)$ holds at all times. In particular, a 
prescribed fixed initial value $z_0\in \mathbb{R}$, which can be located outside 
of $[L(y(0)),U(y(0))]$, is instantaneously projected to $[L(y(0)),U(y(0))]$ according to 
$$
\mathcal{P}[y](0) = \min\{\max\{L(y(0)), z_0\}, U(y(0))\}.
$$
For the approximation by fast-slow systems, the non-linearity $f$ which determines 
the fast flow $x$ is therefore chosen such that its sign is negative for input 
parameters above the graph $(y,U(y))$ and positive below $(y,L(y))$, so that the 
fast flow is directed towards the area between both graphs. 
Note that $x(0)=z_0\in \mathbb{R}$ can be located outside of $[L(y(0)),U(y(0))]$. 
Choosing $f=0$ between $(y,U(y))$ and $(y,L(y))$, it turns out that the fast variable 
$x$ tends to $\mathcal{P}[y]$ in the singular limit $\varepsilon\rightarrow 0$, 
while $y$ still solves the slow equation. In particular, the two-dimensional area 
between the graphs of $L$ and $U$ is the critical manifold of the approximating 
fast-slow system, i.e., it has codimension zero. Note that in order to obtain interesting 
long-term effects in this setting, one has to allow time-dependent $g$, since 
$\mathcal{P}[y]$ is unchanged while $y$ remains constant. However, before considering 
non-autonomous systems, it is crucial to understand the dynamics of the autonomous 
system \eqref{Eq:fastsubIntro_x}--\eqref{Eq:fastsubIntro_y} first. If one drops the 
assumption that $U$ and $L$ have to be monotone increasing, then a non-hyperbolic 
point such as a fold can be generically located on e.g. $L$ (or $U$).
In this case, with $\varepsilon$ small, some trajectories of the approximating 
fast-slow system, follow $(y,L(y))$ in $(y,x)$-phase space according to the slow 
dynamics up to the fold point and then jump upwards according to the fast dynamics 
until eventually arriving at a point on the graph $(y,L(y))$ again, see (b) in 
Figure~\ref{Fig:play}.

In this work, we restrict to the local dynamics around one boundary curve of the 
critical manifold (i.e., we select $U$ or $L$) which acts attracting from one side. 
In particular, we analyze the local dynamics of the corresponding system if this 
curve contains a non-hyperbolic point, as in (b) in Figure~\ref{Fig:play}.\medskip

The outline of this work is the following.
In Section~\ref{Sec:fold}, we analyze the analogous behaviour to a so-called 
fold singularity in the codimension zero setting. In particular, we extend the critical 
manifold 
$$
\mathcal{C}_{\partial}:=\{(x,y)\in \mathbb{R}^2:\, y=x^2\}
\index{k@$\mathcal{C}_{\partial}$}
$$
to one side by replacing $f(x,y)=-y+x^2$ by zero on one side of $\mathcal{C}_{\partial}$. 
This yields a fast-slow system with critical manifold 
$$
\mathcal{C}_0=\{(x,y)\in \mathbb{R}^2:\, y\geq x^2\},
$$ which has the classical form of a system with a fold singularity, but only from 
one side. In Theorem~\ref{Thm:class_fold} we restate a classical result for the local 
dynamics of the slow flow close to the fold point, see \cite{DumortierRoussarie,KruSzm3,
	KruSzm2,KuehnBook}. In Theorem~\ref{Thm:fold}, we prove that this classical result 
still applies in the codimension zero setting, by proving that the set 
$\mathbb{R}^2\backslash\mathcal{C}_0$ is invariant. In particular, Theorem~\ref{Thm:fold} 
also applies if $f(x,y)$ is not replaced by the zero function in $\mathcal{C}_0$, but by 
any other function $h(x,y)$. The result for $h=0$ is summarized in Corollary~\ref{Cor:canard}.

In Sections~\ref{Sec:canard_class}--\ref{Sec:canard}, we analyze the analogous case to a 
canard singularity in the codimension zero setting, which turns out to be far more difficult.
Again, $f(x,y)=-y+x^2$ is set to zero on one side of $\mathcal{C}_{\partial}$, and the 
resulting fast-slow system has again critical manifold 
$\mathcal{C}_0=\{(x,y)\in \mathbb{R}^2:\, y\geq x^2\}$.
The function $g$ this time takes the form 
$$
g=xg_1- \lambda g_2 + yg_3,\qquad \text{with}\qquad g_1,g_2 = 1+\mathcal{O}(x,y,\lambda),
$$
and we want to analyze the fast-slow system for $\varepsilon>0$ and for different 
parameters $\lambda$. In Section~\ref{Sec:canard_class}, we restate the results for 
the classical planar canard case \cite{KruSzm3,KruSzm2}.
Section~\ref{Sec:canard} contains our main results on the dynamics within the small 
neighbourhood $V$ of the equilibrium point $(0,0)\in\mathcal{C}_{\partial}$ for $\lambda=0$.
We write $p_e=p_e(\lambda)$ for the (perturbed) equilibrium. In Theorem~\ref{Thm:canard1}, 
we prove the existence of two critical values $\lambda_H,\lambda_{c}$ and a branch of 
equilibria in $\mathcal{C}_0$ emanating from $p_e$, such that all trajectories starting 
on the left side of a small neighbourhood $U^{0}=U^{0}(\lambda,\varepsilon)$ around this 
branch leave the critical manifold, re-enter it to the right of $U^{0}$ but close to $p_e$, 
and leave $V$ in $\mathcal{C}_0$, provided that $-\lambda_0<\lambda<\lambda_{H}$.
For $-\lambda_0 <\lambda <\lambda_*$, $\lambda_* \in (\lambda_{H},\lambda_c)$, this 
neighbourhood $U^{0}$ around the graph $(x,u_e(x))$ of roots of $g(\cdot,\cdot,\lambda)$ 
has width $\mathcal{O}(\varepsilon+\sqrt{\varepsilon}\lambda)$ in $V_{2,\varepsilon}$ and 
respectively $\mathcal{O}((u_e(x))^2 + \lambda u_e(x))$ for $(x,u_e(x))\in V_{1,\varepsilon}$. 
For $\lambda_*<\lambda <\lambda_0$, $U^{0}$ it has width $\mathcal{O}(\varepsilon^{3/2}+
{\sqrt{\varepsilon}}\lambda)$ in $V_{2,\varepsilon}$ and respectively $\mathcal{O}((u_e(x))^2 + 
\lambda u_e(x))$ for $(x,u_e(x))\in V_{1,\varepsilon}$. The result follows since $p_e$ is 
attracting in the classical case for all $-\lambda_0<\lambda<\lambda_{H}$. Moreover, we prove 
that a maximal canard solution exists just as in the classical case if and only if 
$\lambda=\lambda_{c}$.\medskip 

In Theorem~\ref{Thm:canard2}, we show that stable half orbits below the parabola 
$\mathcal{C}_{\partial}$ occur as $\lambda$ passes through $\lambda_{H}$.
In particular, trajectories starting left of $U^{0}$ in $\mathcal{C}_0$ which exit 
the critical manifold outside of such a half orbit, are attracted to the half orbit 
from the outside and re-enter $\mathcal{C}_0$ to the right of it before they leave 
$V$ in $\mathcal{C}_0$. Similarly, trajectories which reach the parabola inside the 
half orbit stay there as long as they are located outside of the critical manifold, 
and re-enter $\mathcal{C}_0$ inside the half orbit before they leave $V$ in $\mathcal{C}_0$.
As the periodic orbits in the classical setting, also the half orbits get larger 
as $\lambda>\lambda_{H}$ increases up to a critical value $\lambda_{sc}<\lambda_{c}$, 
which is close to the maximal canard value, see Figure~\ref{Fig:numerics} for a numerical 
example. Yet, note carefully that these (non-maximal) canards essentially get trapped
in $\cC_0$ showing a very significant difference to the classical case.

Finally, in Theorem~\ref{Thm:canard3}, we analyze the behaviour for large values 
$\lambda_{c}<\lambda<\lambda_0$. In particular, we prove the existence of a vertical 
line $P_c$ in $(x,y)$-space, such that all solutions starting to the left of $P_c$ 
leave the neighbourhood $V$ below the critical manifold. All solutions starting 
in $\mathcal{C}_0\backslash U^{0}$ and to the right of $P_c$ leave $V$ in $\mathcal{C}_0$ 
and right of the branch of $U^{0}$, see Figure~\ref{Fig:LargeLambda}. Hence, there 
is again a trapping effect for part of the orbits.\medskip

In summary, our results provide a complete characterization of the local dynamics around 
fold and canard points in planar fast-slow systems $m=1=n$, when the critical manifold 
$\mathcal{C}_0$ has codimension zero. Furthermore, we develop a refinement of the blow-up
near folds applicable to higher-codimension situations, which may also appear in a wide
variety of other contexts, not only for hysteresis operators.

\section{Blow-up technique for the fold}\label{Sec:fold}

\subsection{The classical case}

The classical fast-slow system for the fold normal form is given by 
\index{l@classical fold| {\eqref{Eq:class_fold_x}--\eqref{Eq:class_fold_y}, }}
\begin{alignat}{2}
x' &= 
-y + x^2,\label{Eq:class_fold_x}\\
y' &= -\varepsilon.\label{Eq:class_fold_y}
\end{alignat}
The critical manifold for this system is given by the parabola $$\mathcal{C}_{\partial}:=\{(x,y)\in \mathbb{R}^2: y= x^2\},$$ 
i.e.~by a one-dimensional manifold in a two-dimensional dynamical system.
We also introduce the attracting and repelling branch $$\mathcal{C}_{0}^a:=\mathcal{C}_{\partial}\cap (\mathbb{R}^-\times\mathbb{R}),\qquad
\mathcal{C}_{0}^r:=\mathcal{C}_{\partial}\cap (\mathbb{R}^+\times\mathbb{R}).
\index{m@$\mathcal{C}_{0}^a$ and $\mathcal{C}_{0}^r$}$$
System \eqref{Eq:class_fold_x}--\eqref{Eq:class_fold_y}, and specifically its dynamics around the non-hyperbolic fold point $(0,0)$, has been analyzed in \cite{KruSzm3}.
In particular, a blow-up of the fold was applied to study the local behaviour around this point.
The main result \cite[Theorem~2.1]{KruSzm3} is the following:
Let $\rho>0$ be chosen small enough and consider sections
$$
\Delta_{\mathrm{in}}:=\{(x,\rho^2): x\in J\},\qquad \Delta_{\mathrm{out}}:=\{(\rho,y): y\in \mathbb{R}\}
$$
for some suitable interval $J\subset \mathbb{R}$.
Let $\Pi:\Delta_{\mathrm{in}}\rightarrow \Delta_{\mathrm{out}}$ be the transition map for the flow \eqref{Eq:class_fold_x}--\eqref{Eq:class_fold_y}.

\begin{theorem}\label{Thm:class_fold}
	There exists $\varepsilon_0 > 0$ such that the following assertions hold for $\varepsilon\in (0,\varepsilon_0)$:
	
	\begin{enumerate}
		\item[(T1)] The attracting slow manifold $\mathcal{C}_{\varepsilon}^a$ passes through $\Delta_{\mathrm{out}}$ at a point
		$(\rho,h(\varepsilon))$, where $h(\varepsilon)=\mathcal{O}\left(\varepsilon^{\frac{2}{3}}\right)$.
		\item[(T2)]
		The transition map $\Pi$ is a contraction with contraction rate
		$\mathcal{O}\left(e^{-\frac{c}{\varepsilon}}\right)$, where $c$ is a positive constant.
	\end{enumerate}
\end{theorem}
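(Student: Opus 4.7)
The plan is to use the weighted polar blow-up of the fold point $(0,0,0)$ in the extended system $(x,y,\varepsilon)$, carry out the analysis in two directional charts, and then match the information. For the fold, the relevant weights differ slightly from the canard case introduced earlier in the paper; one sets $\Psi(\bar x,\bar y,\bar\varepsilon,\bar r) = (\bar r\bar x,\bar r^{2}\bar y,\bar r^{3}\bar\varepsilon)$ so that the quasi-homogeneous structure of \eqref{Eq:class_fold_x}--\eqref{Eq:class_fold_y} (degrees $1,2,3$) is respected. I would then introduce the entry chart $K_{1}$ corresponding to $\bar y=1$ via $x = r_{1}x_{1}$, $y = r_{1}^{2}$, $\varepsilon = r_{1}^{3}\varepsilon_{1}$, and the rescaling chart $K_{2}$ corresponding to $\bar\varepsilon=1$ via $x = r_{2}x_{2}$, $y = r_{2}^{2}y_{2}$, $\varepsilon = r_{2}^{3}$.

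In $K_{1}$, after desingularization (dividing out the common factor $r_{1}$ arising from the vector field), one obtains a smooth system whose invariant hyperplanes $\{r_{1}=0\}$ and $\{\varepsilon_{1}=0\}$ carry a partially hyperbolic equilibrium at $(x_{1},r_{1},\varepsilon_{1})=(-1,0,0)$ with one negative eigenvalue $-2$ (coming from $\partial_{x_{1}}(x_{1}^{2}-1)$ at $x_{1}=-1$) and two zero eigenvalues. Center manifold theory then supplies a two-dimensional attracting center manifold that contains the extension of $\mathcal{C}_{0}^{a}$ together with its Fenichel perturbation for $\varepsilon>0$. The first step is to propagate the attracting slow manifold from a section in $K_{1}$ (matching the original $\Delta_{\mathrm{in}}$ after the blow-down) through this center manifold, and to show that the flow towards the attracting branch accumulates an exponentially small transverse distance to $\mathcal{C}_{\varepsilon}^{a}$; integrating the transverse eigenvalue along the slow passage in $K_{1}$ yields a contraction of order $\exp(-c/\varepsilon)$ when translated back to the original time via $\varepsilon = r_{1}^{3}\varepsilon_{1}$, which will give assertion (T2).

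Next, I would change to chart $K_{2}$, using the explicit transition formulas (essentially $r_{1}=r_{2}\varepsilon_{1}^{-1/3}$ and $x_{1}=x_{2}\varepsilon_{1}^{1/3}$) to carry the image of $\mathcal{C}_{\varepsilon}^{a}$ into $K_{2}$ as an initial condition of order one. In $K_{2}$, after the time rescaling $t\mapsto r_{2}t$, the system becomes the classical Riccati problem
\begin{equation*}
x_{2}' = x_{2}^{2} - y_{2},\qquad y_{2}' = -1,
\end{equation*}
which is $r_{2}$-independent and can be analyzed using its reduction to the Airy equation. The key technical step is to identify a distinguished forward orbit $\gamma_{0}$ of this Riccati equation that starts tangent to the attracting branch $\{x_{2}=-\sqrt{y_{2}}\}$ as $y_{2}\to+\infty$ and reaches $x_{2}\to+\infty$ in finite time $y_{2}$, and to show via continuity and transversality that it is precisely this trajectory onto which the image of $\mathcal{C}_{\varepsilon}^{a}$ is mapped. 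Its intersection with the section $\{x_{2}=\rho/r_{2}\}$ (the $K_{2}$-counterpart of $\Delta_{\mathrm{out}}$) happens at a value $y_{2}=\mathcal{O}(1)$; transporting back through $y=r_{2}^{2}y_{2}=\varepsilon^{2/3}y_{2}$ yields $h(\varepsilon) = \mathcal{O}(\varepsilon^{2/3})$, which proves (T1).

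The main obstacle is the analysis in $K_{2}$: one must show that the Riccati flow has a unique admissible connecting orbit $\gamma_{0}$, that the invariant manifold from $K_{1}$ lands on it up to an $\varepsilon$-small error, and that the matching across the overlap $K_{1}\cap K_{2}$ is consistent. Once $\gamma_{0}$ is identified and its passage through the exit section is controlled, (T1) and (T2) follow by a straightforward blow-down argument, since the strong contraction has already been accumulated in $K_{1}$ and nothing is lost in the bounded-time passage through $K_{2}$.
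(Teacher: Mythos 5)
Your plan correctly identifies the right blow-up weights $(1,2,3)$, the entry chart $K_1$ where the $-2$ eigenvalue drives the exponential contraction for (T2), the rescaling chart $K_2$ with the Riccati/Airy reduction, and the existence of a distinguished connecting orbit; all of this matches the Krupa--Szmolyan proof that the paper cites for Theorem~\ref{Thm:class_fold}. However, the paper (following \cite{KruSzm3}) explicitly states that \emph{three} directional charts $K_1$, $K_2$, $K_3$ are needed, and it records the $K_3$ vector field ($x=r_3$, $y=r_3^2 y_3$, $\varepsilon = r_3^3\varepsilon_3$) precisely because it is used again in the proof of Lemma~\ref{Lem:fold}. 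Your sketch omits $K_3$ entirely.

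This is a genuine gap, not a stylistic choice. The exit section $\Delta_{\mathrm{out}}=\{x=\rho\}$ sits at a fixed $\mathcal{O}(1)$ distance from the origin, so in $K_2$-coordinates it becomes $\{x_2 = \rho/r_2 = \rho\varepsilon^{-1/3}\}$, which leaves the bounded disc $D$ on which $K_2$ is defined as $\varepsilon\to 0$. You cannot uniformly track the flow out to an unboundedly receding section inside the rescaling chart; the Riccati orbit blows up in finite time and the behaviour near blow-up is exactly what the exit chart $K_3$ renders hyperbolic (the equilibrium $q_{\mathrm{out}}=(r_3,y_3,\varepsilon_3)=(0,0,0)$ with resonant eigenvalues $1,-2,-3$). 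The $\mathcal{O}(\varepsilon^{2/3})$ estimate in (T1) and the preservation of the exponential contraction across the exit in (T2) are obtained in \cite{KruSzm3} by a center-manifold/normal-form argument at $q_{\mathrm{out}}$ in $K_3$ and a matching $K_2\cap K_3$, not by intersecting the $K_2$ flow with $\{x_2 = \rho\varepsilon^{-1/3}\}$. Your heuristic gives the right scaling for the pure normal form, where the $K_2$ system happens to be $r_2$-independent, but it would fail for the general fold with higher-order terms (which is what \cite[Theorem~2.1]{KruSzm3} actually covers) and it does not justify uniform control of the transition map. You need to add the $K_3$ chart, identify the resonant hyperbolic point there, and show how $\gamma_0$ connects to its stable manifold.
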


This result is shown by an appropriate blow-up transformation near the fold point.
Three directional charts $K_1$ to $K_3$ are necessary to describe the behaviour of trajectories close to the origin.
$K_1$ is determined by
$$
x=r_1x_1,\quad y=r_1^2,\quad \varepsilon=r_1^3\varepsilon_1.
$$
In the chart $K_1$, the desingularized vector field (i.e. divided by $r_1$) reads
\begin{alignat*}{2}
x_1' &= 
-1 + x_1^2 + \frac{1}{2}\varepsilon_1 x_1,\\
r_1' &= -\frac{1}{2}r_1\varepsilon_1,\\
\varepsilon_1' &= \frac{3}{2}\varepsilon_1^2.
\end{alignat*}
Similarly, the rescaling chart $K_2$ is determined by
$$
x=r_2x_2,\quad y=r_2^2y_2,\quad \varepsilon=r_2^3,
$$
and the desingularized vector field (i.e. divided by $r_2$) reads
\begin{alignat*}{2}
x_2' &=
- y_2 + x_2^2,\\
y_2' &= -1,\\
\dot{r}_2 &= 0.
\end{alignat*}
Finally, $K_3$ is determined by
$$
x=r_3,\quad y=r_3^2y_3,\quad \varepsilon=r_3^3\varepsilon_3,
$$ and the desingularized vector field (i.e. divided by $r_3$) reads
\begin{alignat*}{2}
r_3' &= 
r_3 (-y_3 + 1),\\
y_3' &= 
-\varepsilon_3 - 2y_3(-y_3+1),\\
\varepsilon_3' &= 
-3\varepsilon_3 (1-y_3).
\end{alignat*}

\subsection{Two-dimensional critical manifold}

The situation is somewhat different to the classic fold if the critical manifold has codimension zero.
Consider the fast-slow system
\index{n@new fold| {\eqref{Eq:fold_x}--\eqref{Eq:fold_y}, }}
\begin{alignat}{2}
x' &= \left\{\begin{matrix}
-y + x^2&& \text{for } y< x^2,\\
h(x,y)&& \text{for } y\geq x^2,
\end{matrix}\right.\label{Eq:fold_x}\\
y' &= -\varepsilon.\label{Eq:fold_y}
\end{alignat}
For $h(x,y)=0$, the critical manifold is given by $\mathcal{C}_0=\{(x,y)\in \mathbb{R}^2: y\geq x^2\}$.
Moreover, for $h(x,y)=-y+x^2$, the critical manifold is given by $\mathcal{C}_{\partial}=\{(x,y)\in \mathbb{R}^2: y= x^2\}$ as for the classical fold. In this case, system \eqref{Eq:fold_x}--\eqref{Eq:fold_y} has a non-hyperbolic equilibrium at $(0,0)$, see Figure~\ref{fig:01}, and Theorem~\ref{Thm:class_fold} applies.

\begin{figure}
	[htbp]
	\centering
	\begin{overpic}[scale=0.3]{Parabola}
		\put(50,40){\scalebox{1.0}{$\mathcal{C}_0$}}
		\put(10,25){\scalebox{1.0}{$\mathcal{C}_{\partial}$}}
		\put(36,-15){
			\begin{tikzpicture}
			\filldraw (0,0) circle (1pt) node[align=left,   below]{\scalebox{1.0}{$(0,0)$}};
			\end{tikzpicture}
		}
		\put(-40,-5){
			\begin{tikzpicture}
			\draw[->] (-1,0) -- (0,0) node[anchor = north]{$x$};
			\draw[->] (-1,0) -- (-1,1) node[anchor = west]{$y$};
			\end{tikzpicture}
		}
	\end{overpic}
	
	\caption{\\
		\label{fig:01}The two-dimensional set $\mathcal{C}_0$ (gray) depicts the critical manifold for the case $h(x,y)=0$. Its lower boundary $\mathcal{C}_{\partial}$ represents the critical manifold for the classical case $h(x,y)=-y+x^2$.
		The fold point, which will later be a canard point in Sections~\ref{Sec:canard_class}--\ref{Sec:canard}, is located at $(0,0)$.}
\end{figure}

We keep the notation $\mathcal{C}_0$ also for $h(x,y)\neq 0$.
It turns out that Theorem~\ref{Thm:class_fold} applies at least partly to the problem 
\eqref{Eq:fold_x}--\eqref{Eq:fold_y}.

As in the previous section, let $\rho>0$ be chosen small enough and consider sections
$$
\tilde{\Delta}_{\mathrm{in}}:=\{(x,\rho^2): x\in J\},\qquad \tilde{\Delta}_{\mathrm{out}}:=\{(\rho,y): y\in (-\infty,\rho^2)\}
$$
for some suitable interval $J\subset (-\infty,-\rho)$.
Let $\tilde{\Pi}:\tilde{\Delta}_{\mathrm{in}}\rightarrow \tilde{\Delta}_{\mathrm{out}}$ be the transition map for the flow \eqref{Eq:fold_x}--\eqref{Eq:fold_y}.

\begin{theorem}\label{Thm:fold}
	There exists $\varepsilon_0 > 0$ such that the following assertions hold for $\varepsilon\in (0,\varepsilon_0)$:
	
	$\tilde{\Pi}:\tilde{\Delta}_{\mathrm{in}}\rightarrow \tilde{\Delta}_{\mathrm{out}}$ is the restriction of the transition map $\Pi$ from the previous section to the set $\tilde{\Delta}_{\mathrm{in}}$.
	\begin{enumerate}
		\item[(T1)] The attracting slow manifold $\mathcal{C}_{\varepsilon}^a$ passes through $\tilde{\Delta}_{\mathrm{out}}$ at a point
		$(\rho,h(\varepsilon))$, where $h(\varepsilon)=O\left(\varepsilon^{\frac{2}{3}}\right)$.
		\item[(T2)]	The transition map $\Pi$ is a contraction with contraction rate
		$O\left(e^{-\frac{c}{\varepsilon}}\right)$, where $c$ is a positive constant.
	\end{enumerate}
\end{theorem}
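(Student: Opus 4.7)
The plan is to reduce Theorem~\ref{Thm:fold} directly to Theorem~\ref{Thm:class_fold} by showing that the open set $\R^2\setminus\cC_0=\{(x,y):y<x^2\}$ is forward invariant under the flow of \eqref{Eq:fold_x}--\eqref{Eq:fold_y}, and that this invariance is independent of the choice of $h$ inside $\cC_0$. First I would observe that for any initial condition $(x_0,\rho^2)\in\tilde{\Delta}_{\mathrm{in}}$ with $x_0\in J\subset(-\infty,-\rho)$ one has $x_0^2>\rho^2=y_0$, so the orbit starts strictly below the parabola $\cC_{\partial}$.

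The core step is the invariance calculation. Setting $\phi(x,y):=y-x^2$, on the boundary $\cC_{\partial}=\{\phi=0\}$ approached from $\{\phi<0\}$, the lower branch of \eqref{Eq:fold_x} applies, giving $x'=-y+x^2=0$ and $y'=-\varepsilon$. Consequently
\[
\dot\phi\big|_{\cC_{\partial}^{-}} \;=\; y'-2x\,x' \;=\; -\varepsilon \;<\;0.
\]
The one-sided derivative of $\phi$ along the flow from below is thus strictly negative, so no trajectory in $\{\phi<0\}$ can cross into $\cC_0$. Only the values of the vector field on the lower side enter this computation, so the argument is completely insensitive to the choice of $h$ inside $\cC_0$.

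Given forward invariance, the orbit of any point in $\tilde{\Delta}_{\mathrm{in}}$ stays in $\{y<x^2\}$, where \eqref{Eq:fold_x}--\eqref{Eq:fold_y} reduces literally to the classical fold system \eqref{Eq:class_fold_x}--\eqref{Eq:class_fold_y}. Hence $\tilde\Pi$ is the restriction of the classical $\Pi$ to $\tilde{\Delta}_{\mathrm{in}}$, and since $y'=-\varepsilon$ globally, $y(\tau)$ decreases strictly in time, so any first crossing of $\{x=\rho\}$ occurs at some $y<\rho^2$ and lands in $\tilde{\Delta}_{\mathrm{out}}$. Applying Theorem~\ref{Thm:class_fold} then yields (T1) and (T2) for $\tilde\Pi$ verbatim. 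The only ingredient beyond the classical result is the invariance calculation, and the reason it is essentially immediate is that $f$ vanishes on $\cC_{\partial}$ when approached from below, making the vector field in that one-sided limit purely vertical and directed into $\{y<x^2\}$; thus $\cC_{\partial}$ acts as a no-exit barrier for the flow from below, regardless of what is prescribed on the other side.
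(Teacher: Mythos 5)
Your proposal is correct, and it takes a genuinely different route from the paper's proof. The paper proves the invariance of $\R^2\setminus\cC_0$ (its Lemma~\ref{Lem:fold}) by transforming system \eqref{Eq:fold_x}--\eqref{Eq:fold_y} into the three blow-up charts $K_1$, $K_2$, $K_3$ and verifying invariance chart by chart: in $K_1$ one checks that $x_1'\big|_{x_1=\pm 1}=\mp\tfrac12\varepsilon_1$ points outward from $\{|x_1|\le 1\}$, and in $K_2$, $K_3$ the paper cites specific propositions from Krupa--Szmolyan. You instead work entirely in the original $(x,y)$-variables with a Lyapunov-type barrier function $\phi(x,y)=y-x^2$: for $\phi<0$ one has $\dot\phi=-\varepsilon+2x\phi$, whose one-sided limit at $\phi=0$ is $-\varepsilon<0$, so at a putative first crossing time $t^*$ the left-hand derivative of $\phi$ would have to be nonnegative while the flow forces it to be $\le -\varepsilon/2$, a contradiction. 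This is more elementary and self-contained, and it makes transparent why the conclusion is insensitive to $h$, since the computation only ever evaluates the lower branch of \eqref{Eq:fold_x}. The paper's chart-wise formulation costs more machinery but has the advantage of recording the invariance in the blow-up coordinates, which is the language used throughout the rest of the paper (and in particular in the canard sections). One small presentational remark: it is cleanest to frame the one-sided-derivative step explicitly as a first-crossing-time contradiction (using $\dot\phi=-\varepsilon+2x\phi\to -\varepsilon$ as $\phi\to 0^-$ with $x$ bounded on the compact region under consideration), rather than merely evaluating $\dot\phi$ ``on the boundary'', since on $\cC_\partial$ itself the upper branch of the vector field is the one that formally applies.
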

In order to prove Theorem~\ref{Thm:fold} it is enough to show the following lemma:

\begin{lemma}\label{Lem:fold}
	System \eqref{Eq:fold_x}--\eqref{Eq:fold_y}, transformed into the different charts $K_1$--$K_3$, leaves the set $\mathbb{R}^2\backslash \mathcal{C}_0$ invariant. In particular, solution trajectories which start in $\mathbb{R}^2\backslash \mathcal{C}_0$ never enter $\mathcal{C}_0$.
\end{lemma}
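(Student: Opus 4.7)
My plan is to exhibit a scalar barrier function whose strict sublevel set is exactly $\mathbb{R}^2\setminus\mathcal{C}_0$, and to verify that the appropriate directional derivative is strictly inward on the barrier, both in the original coordinates and in each of the three blow-up charts. The natural choice is $F(x,y):=y-x^2$, for which $\mathbb{R}^2\setminus\mathcal{C}_0=\{F<0\}$ and $\mathcal{C}_{\partial}=\{F=0\}$. On the ``below'' side, where \eqref{Eq:fold_x} reads $x'=-y+x^2$, the chain rule gives
\[
\dot F=\dot y-2x\dot x=-\varepsilon-2x(-y+x^2),
\]
which on $\mathcal{C}_{\partial}$ reduces to $\dot F=-\varepsilon<0$. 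A standard first-crossing argument, namely that a trajectory with $F(t_0)<0$ first hitting $\{F=0\}$ at some time $t^*$ would require $\dot F(t^*)\ge 0$ and contradict $\dot F(t^*)=-\varepsilon$, proves invariance directly in the original coordinates.

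Next, I would carry out the analogous calculation directly in each chart, using the desingularized vector fields listed above. In $K_1$, the preimage of $\{y<x^2\}$ is $\{|x_1|>1\}$, and evaluation of the $x_1$-equation on the boundary gives $x_1'|_{x_1=1}=\varepsilon_1/2\ge 0$ and $x_1'|_{x_1=-1}=-\varepsilon_1/2\le 0$, so both components of $\{|x_1|=1\}$ are pushed outward into $\{|x_1|>1\}$. In $K_2$, where the preimage of the boundary is $\{y_2=x_2^2\}$, substitution yields $\frac{d}{dt}(y_2-x_2^2)=-1$ on this set. In $K_3$, where the boundary is $\{y_3=1\}$, the $y_3$-equation gives $y_3'|_{y_3=1}=-\varepsilon_3\le 0$. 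In each chart the first-crossing argument then closes the invariance statement.

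The one delicate point, which I expect to be the main (though modest) obstacle, is the blow-up locus itself, i.e.\ the sphere $\bar r=0$, which in the charts amounts to $\varepsilon_1=0$ (with $r_1=0$) in $K_1$, $r_2=0$ in $K_2$, and $\varepsilon_3=0$ (with $r_3=0$) in $K_3$. There the strict inward sign above degenerates to equality and the first-crossing argument does not apply verbatim. I would handle this by observing that in the singular limit the relevant boundary components are themselves invariant sets of the desingularized flow: $x_1=\pm 1$ is invariant under $x_1'=x_1^2-1$ in $K_1$, the parabola $\{y_2=x_2^2\}$ is invariant under $(x_2'\,,y_2')=(x_2^2-y_2,-1)$ restricted to $r_2=0$, and $y_3=1$ is invariant under $y_3'=2y_3(y_3-1)$ in $K_3$. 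Hence no trajectory in $\mathbb{R}^2\setminus\mathcal{C}_0$ can cross these sets even in the singular limit. Once these degenerate cases are absorbed, the chart-wise statements combine to the lemma, and Theorem~\ref{Thm:fold} then follows by restricting Theorem~\ref{Thm:class_fold} to the forward-invariant region $\{F<0\}$.
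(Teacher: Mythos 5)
Your proof is correct and follows essentially the same route as the paper: establish, chart by chart in $K_1$--$K_3$, that the boundary $\mathcal{C}_\partial$ can only be crossed from $\mathcal{C}_0$ into $\mathbb{R}^2\setminus\mathcal{C}_0$, so that a trajectory started below never sees the part of the vector field governed by $h$. The difference is one of explicitness: where the paper disposes of $K_1$ with the terse remark that $\varepsilon_1'>0$ keeps $\{1<|x_1|\}$ invariant, and delegates the $K_2$ and $K_3$ invariance to cited propositions of Krupa--Szmolyan, you carry out the boundary-derivative computations directly and package them via the barrier function $F=y-x^2$, which makes the argument self-contained and arguably cleaner.

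One small inaccuracy, confined to a paragraph of your proof that is in fact unnecessary: in $K_2$ you assert that the parabola $\{y_2=x_2^2\}$ is invariant on $\{r_2=0\}$. It is not; your own computation gives $\tfrac{d}{dt}(y_2-x_2^2)\equiv -1$ there, so trajectories cross transversally with $\dot y_2=-1$. The inward sign in $K_2$ therefore never degenerates and the first-crossing argument applies without modification. More broadly, since the lemma concerns a fixed $\varepsilon>0$ and the chart relations $\varepsilon=r_1^3\varepsilon_1=r_2^3=r_3^3\varepsilon_3$ then force $\varepsilon_1>0$, $r_2>0$ and $\varepsilon_3>0$ along any actual trajectory, the degenerate configurations you worry about never arise; the discussion of the blow-up locus can be dropped without loss.
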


\begin{proof}
	We show that the dynamics of trajectories starting in the left $(x,y)$-half plane, and close to but outside of $\mathcal{C}_0$, is independent of the particular vector field $h(x,y)$.
	In particular, for $\varepsilon>0$, each trajectory $(x_\varepsilon,y_\varepsilon)$ starting in $\mathbb{R}^2\backslash \mathcal{C}_0$ remains there.
	Since the blow-up technique in this case delivers the same results as for problem \eqref{Eq:fold_x}--\eqref{Eq:fold_y} with $h(x,y)=-y+x^2$ which has been analyzed in \cite{KruSzm3}, we only have to 
	transform \eqref{Eq:fold_x}--\eqref{Eq:fold_y} with general $h$ into the single charts and make sure that the critical set $\mathcal{C}_0$ remains untouched.
	In chart $K_1$, the desingularized vector field (i.e. divided by $r_1$) reads
	\begin{alignat}{2}
	x_1' &= \left\{\begin{matrix}
	-1 + x_1^2 + \frac{1}{2}\varepsilon_1 x_1,&& \text{for } 1< |x_1|,\\
	\frac{1}{r_1}h(r_1x_1,r_1^2) + \frac{1}{2}\varepsilon_1 x_1,&& \text{for } 1\geq |x_1|,
	\end{matrix}\right.\label{Eq:fold_x_1}\\
	r_1' &= -\frac{1}{2}r_1\varepsilon_1,\label{Eq:fold_r_1}\\
	\varepsilon_1' &= \frac{3}{2}\varepsilon_1^2.\label{Eq:fold_eps_1}
	\end{alignat}
	In chart $K_2$, the desingularized vector field (i.e. divided by $r_2$) reads
	\begin{alignat}{2}
	x_2' &= \left\{\begin{matrix}
	- y_2 + x_2^2,&& \text{for } y_2< x_2^2,\\
	\frac{1}{r_2}h(r_2x_2,r_2^2y_2),&& \text{for } y_2\geq x_2^2,
	\end{matrix}\right.\label{Eq:fold_x_2}\\
	y_2' &= -1,\label{Eq:fold_y_2}\\
	\dot{r}_2 &= 0.\label{Eq:fold_r_2}
	\end{alignat}
	Finally, in chart $K_3$, the desingularized vector field (i.e. divided by $r_3$) reads
	\begin{alignat}{2}
	r_3' &= \left\{\begin{matrix}
	r_3 (-y_3 + 1),&& \text{for } y_3< 1,\\
	\frac{1}{r_3}h(r_3,r_3^2y_3),&& \text{for } y_3\geq 1,
	\end{matrix}\right.\label{Eq:fold_r_3}\\
	y_3' &= \left\{\begin{matrix}
	-\varepsilon_3 - 2y_3(-y_3+1),&& \text{for } y_3< 1,\\
	-\varepsilon_3 - 2y_3\frac{1}{r_3^2}h(r_3,r_3^2y_3),&& \text{for } y_3\geq 1,
	\end{matrix}\right.\label{Eq:fold_y_3}\\
	\varepsilon_3' &= \left\{\begin{matrix}
	-3\varepsilon_3 (1-y_3),&& \text{for } y_3< 1,\\
	-3\varepsilon_3\frac{1}{r_3^2}h(r_3,r_3^2y_3),&& \text{for } y_3\geq 1.
	\end{matrix}\right.\label{Eq:fold_eps_3}
	\end{alignat}
	Note that the functions $\frac{1}{r_1}h(r_1x_1,r_1^2)$, $\frac{1}{r_2}h(r_2x_2,r_2^2y_2)$ and $\frac{1}{r_3^2}h(r_3,r_3^2y_3)$ are in general not bounded for $r_3\rightarrow 0$.
	Nevertheless, we are only interested in the regions $\{1<|x_1|\}$, $\{y_2<x_2^2\}$ and $\{y_3<1\}$. In particular, we prove that no trajectory starting outside the blown-up sets of $\mathcal{C}_0$, i.e. outside the sets $\{1\geq|x_1|\}$, $\{y_2\geq x_2^2\}$ and $\{y_3\geq 1\}$, reaches them.
	Hence, we do not have to consider the particular dynamics in these latter cases.
	For $h(x,y)=0$, there is no problem at all.
	Since $\varepsilon_1'>0$, the set $\{1<|x_1|\}$ is invariant under \eqref{Eq:fold_x_1}--\eqref{Eq:fold_eps_1}.
	Transformed into the chart $K_2$, the set $\{1<|x_1|\}$ is given by $\{y_2<x_2^2\}$.
	By \cite[Proposition~2.3~2,5 and Proposition~2.6~5]{KruSzm3} each trajectory starting in the set $\{y_2<x_2^2\}$ remains in this set.
	By \cite[Proposition~2.3~1 and 5]{KruSzm3}, the $y_2$-component of each such trajectory is asymptotic to some $y_r<0$ and $x_2$ gets positive at some time.
	The transformation $y_3=y_2x_2^{-2}<0$ \cite[Lemma~2.2]{KruSzm3} implies that we arrive in the set $\{y_3<1\}$.
	Finally, \cite[Proposition~2.11]{KruSzm3} proves that we remain in this set.
	Hence, the main result \cite[Theorem~2.1]{KruSzm3} applies for all solutions $(x_\varepsilon,y_\varepsilon)$ of \eqref{Eq:fold_x}--\eqref{Eq:fold_y} which start in $(\mathbb{R}^-\times \mathbb{R})\backslash \mathcal{C}_0$.
\end{proof}

Note that Lemma~\ref{Lem:fold} determines the whole flow near the fold point for the particular case $h(x,y) = 0$.
Indeed, for $h(x,y)=0$, each trajectory starting at a point $(x_\varepsilon(0),y_\varepsilon(0)) \in \mathcal{C}_0\cap (\mathbb{R}^-\times \mathbb{R})$ enters the set $(\mathbb{R}^-\times \mathbb{R})\backslash \mathcal{C}_0$ at $(x_\varepsilon(0),x_\varepsilon(0)^2)$ and then remains in this set.
Also for starting points $(x_\varepsilon(0),y_\varepsilon(0)) \in \mathcal{C}_0\cap (\mathbb{R}^+\times \mathbb{R})$, the trajectory $(x_\varepsilon,y_\varepsilon)$ leaves the set $\mathcal{C}_0$ at $(x_\varepsilon(0),x_\varepsilon(0)^2)$ and, since it does not pass close to the fold point, classical theory can be applied for the further analysis.
In particular, from $(x_\varepsilon(0),x_\varepsilon(0)^2)$, in first order approximation, the trajectory continues horizontally to the right on the line $\{(x,x_\varepsilon(0)^2):\ x\geq x_\varepsilon(0)\}$.
We summarize this special case in the following corollary:

\begin{corollary}\label{Cor:canard}
	Let the assumptions of Theorem~\ref{Thm:fold} hold and fix $\varepsilon\in (0,\varepsilon_0]$.
	Consider system \eqref{Eq:fold_x}--\eqref{Eq:fold_y} with $h(x,y)=0$.
	
	Each solution with $(x_\varepsilon(0),y_\varepsilon(0)) \in \mathcal{C}_0$ moves vertically downwards in $(x,y)$-phase space and enters the invariant set $\mathbb{R}^2\backslash \mathcal{C}_0$ at $(x_\varepsilon(0),x_\varepsilon(0)^2)$. 
	
	For $x_\varepsilon(0)<0$, the trajectory then stays $\mathcal{O}(\varepsilon)$-close to the set $\mathcal{C}_{\partial}$ until it reaches a point in $\Delta_{\mathrm{in}}$, from where Theorem~\ref{Thm:fold} applies.
	
	For $x_\varepsilon(0)>0$, from $(x_\varepsilon(0),x_\varepsilon(0)^2)$, the trajectory is approximated to leading order by the fast subsystem and continues toward the right in $(x,y)$-phase space.
\end{corollary}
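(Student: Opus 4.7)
The plan is to piece together three observations, each of which rests on results already established in the paper. First, I would handle the motion inside $\mathcal{C}_0$: since $h(x,y)=0$, the system \eqref{Eq:fold_x}--\eqref{Eq:fold_y} reduces there to $x'=0$, $y'=-\varepsilon$, so trajectories with initial data in $\mathcal{C}_0$ are vertical downward rays. Such a ray leaves $\mathcal{C}_0$ precisely when $y$ meets the parabola, i.e.\ at the point $(x_\varepsilon(0),x_\varepsilon(0)^2)$; once it crosses, Lemma~\ref{Lem:fold} guarantees it never re-enters $\mathcal{C}_0$. This establishes the first assertion and supplies the initial data for the two remaining cases.

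For $x_\varepsilon(0)<0$, say in a compact subinterval of $(-\infty,-\rho)$, the exit point lies on the attracting branch $\mathcal{C}_0^a$. Throughout $\mathbb{R}^2\setminus\mathcal{C}_0$ the system coincides with the smooth classical fold \eqref{Eq:class_fold_x}--\eqref{Eq:class_fold_y}, so classical Fenichel theory supplies an attracting slow manifold $\mathcal{C}_\varepsilon^a$ that is $\mathcal{O}(\varepsilon)$-close to $\mathcal{C}_\partial$ away from the fold point and attracts nearby orbits exponentially fast in the fast time. After an $\mathcal{O}(\varepsilon|\log\varepsilon|)$ transient the trajectory is trapped in an $\mathcal{O}(\varepsilon)$-tube around $\mathcal{C}_\varepsilon^a$, from which the slow drift $\dot y=-\varepsilon$ carries it to $\tilde\Delta_{\mathrm{in}}$; Theorem~\ref{Thm:fold} then takes over.

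For $x_\varepsilon(0)>0$ the exit point $(x_0,x_0^2)$ lies on the repelling branch, and $x=x_0=\sqrt{y}$ is a repelling equilibrium of the layer problem $x'=x^2-y$ at the instant of exit. As soon as the slow drift lowers $y$ below $x_0^2$, $x'=x^2-y$ becomes strictly positive at $x=x_0$, so $x$ is expelled to the right. Quantitatively, one may freeze $y$ at a value slightly below $x_0^2$ and compare the true solution with the corresponding fast-subsystem solution via a Gronwall estimate on the fast time scale; this confirms that $y$ stays $\mathcal{O}(\varepsilon)$-close to $x_0^2$ on times of order $\mathcal{O}(1)$ while $x$ is governed to leading order by the fast subsystem and exits any compact neighbourhood to the right. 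The main obstacle is precisely the degenerate initial velocity in this third case: since $x'=0$ at the exit point, escape is driven entirely by the $\varepsilon$-small slow perturbation, and one must rule out the trajectory lingering near the parabola long enough to feel the non-hyperbolic fold. Because $x_0>0$ is bounded away from zero, however, the repelling branch is normally hyperbolic at $(x_0,x_0^2)$, so standard slow-manifold arguments suffice and no further blow-up is required.
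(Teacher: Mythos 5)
Your argument is correct and follows essentially the same route the paper takes in the discussion preceding the corollary: inside $\mathcal{C}_0$ the system degenerates to vertical free fall $x'=0$, $y'=-\varepsilon$, Lemma~\ref{Lem:fold} gives forward invariance of $\mathbb{R}^2\setminus\mathcal{C}_0$, the negative branch is handled by Fenichel theory and Theorem~\ref{Thm:fold}, and the positive branch by normal hyperbolicity of $\mathcal{C}_0^r$ away from the fold. One quantitative imprecision in the third case is worth flagging: the exit point $(x_0,x_0^2)$ is an equilibrium of the layer problem, so the ejection is driven entirely by the $\mathcal{O}(\varepsilon)$ drop in $y$, and the fast-time escape transient is $\mathcal{O}(\log(1/\varepsilon))$ rather than $\mathcal{O}(1)$; correspondingly $y$ deviates from $x_0^2$ by $\mathcal{O}(\varepsilon\log(1/\varepsilon))$ during the ejection, not $\mathcal{O}(\varepsilon)$. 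Since this is still $o(1)$ the leading-order picture asserted in the corollary is unchanged, but a Gronwall comparison on a fixed $\mathcal{O}(1)$ fast-time interval, as you suggest, would not by itself show that $x$ leaves a compact neighbourhood.
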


\section{The classical canard case}\label{Sec:canard_class}

The normal form of a canard point is given by \cite{KuehnBook,KruSzm3,KruSzm2}
\index{o@classical canard| {\eqref{Eq:class_blowup_x}--\eqref{Eq:class_blowup_lamb}, }}
\begin{alignat}{2}
x' &=
-y + x^2,\label{Eq:class_blowup_x}\\
y' &= \varepsilon g(x,y,\lambda),\label{Eq:class_blowup_y}\\
\varepsilon' &= 0 ,\label{Eq:class_blowup_eps}\\
\lambda' &= 0,\label{Eq:class_blowup_lamb}
\end{alignat}
where $g=xg_1- \lambda g_2 + yg_3$ is $\mathrm{C}^r$-smooth for $r\geq 3$, $g_i=g_i(x,y,\lambda)$ for $i=1,2,3$, and with $g_1,g_2 = 1+\mathcal{O}(x,y,\lambda)$.
We denote $a_1=\partial_x g_1(0,0,0), a_2=g_3(0,0,0)$ and assume $a_1,a_2>0$.
Moreover, $\varepsilon\in (0,\varepsilon_0]$ and $\lambda\in [-\lambda_0,\lambda_0]$ for certain values $\varepsilon_0,\lambda_0 >0$.
As for the classical fold, all points of the one-dimensional critical manifold $\mathcal{C}_{\partial}=\{(x,y):\, y= x^2\}$ are normally hyperbolic, except for the canard point at the origin. While classical Fenichel theory can be applied near the hyperbolic subset of the critical manifold in order to determine the behaviour of the slow flow of \eqref{Eq:class_blowup_x}--\eqref{Eq:class_blowup_lamb}, the blow-up \eqref{Blow_up_Psi} of the origin with directional charts \eqref{Blow_up_Phi1}--\eqref{Blow_up_Phi2} yields the corresponding results for the flow close to the canard point. The main results are the following:
\begin{theorem}\label{Thm:class_canard1}\cite[Theorems~3.1-3.2]{KruSzm2}
	Suppose $\varepsilon_0$, $\lambda_0$ are sufficiently small and consider $V=V_{\varepsilon_0}$ as defined in Definition~\ref{Def:nbhd}.
	Then for all fixed $\varepsilon\in (0,\varepsilon_0]$, system \eqref{Eq:class_blowup_x}--\eqref{Eq:class_blowup_lamb} has exactly one equilibrium $p_e\in V$\index{p@$p_e$| {Theorem~\ref{Thm:class_canard1}, }}, which converges to the canard point as $(\varepsilon,\lambda)\rightarrow 0$.
	Furthermore, there exists a curve $\lambda_H(\sqrt{\varepsilon}) = -\frac{a_2}{2}\varepsilon + \mathcal{O}(\varepsilon^{\frac{3}{2}})$\index{q@$\lambda_H(\sqrt{\varepsilon})$| {Theorem~\ref{Thm:class_canard1}, }} such that $p_e$ is stable for $\lambda<\lambda_H(\sqrt{\varepsilon})$ and loses stability through a Hopf bifurcation as $\lambda$ passes through $\lambda_H(\sqrt{\varepsilon})$.
	Moreover, there exists a function smooth in $\sqrt{\varepsilon}$ given by
	$$\lambda_c(\sqrt{\varepsilon})=-\left(\frac{a_2}{2} + \frac{-2a_1 -2a_2}{8}\right)\varepsilon + \mathcal{O}(\varepsilon^{3/2})=\frac{a_1 - a_2}{4}\varepsilon + \mathcal{O}(\varepsilon^{3/2}),$$\index{r@$\lambda_c(\sqrt{\varepsilon})$| {Theorem~\ref{Thm:class_canard1}, }}
	such that the attracting slow manifold $\mathcal{C}_{\varepsilon}^a$ connects to the repelling slow manifold $\mathcal{C}_{\varepsilon}^r$ if and only if $\lambda=\lambda_c(\sqrt{\varepsilon})$.\index{s@$\mathcal{C}_{\varepsilon}^a$ and $\mathcal{C}_{\varepsilon}^r$| {Theorem~\ref{Thm:class_canard1}, }}
\end{theorem}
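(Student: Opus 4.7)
The plan is to follow the blow-up strategy of \cite{KruSzm2,KruSzm3} using the weighted blow-up \eqref{Blow_up_Psi} together with the directional charts $K_1,K_2$. An equilibrium for fixed $\varepsilon>0$ must satisfy $y=x^2$ and
\[
F(x,\lambda) := x\,g_1(x,x^2,\lambda) - \lambda\,g_2(x,x^2,\lambda) + x^2\,g_3(x,x^2,\lambda) = 0;
\]
since $F(0,0)=0$ and $\partial_x F(0,0)=g_1(0,0,0)=1$, the implicit function theorem produces a unique smooth branch $x_e(\lambda)=\lambda+\mathcal{O}(\lambda^2)$ and hence a unique $p_e=(x_e,x_e^2)\in V$, which converges to the canard point as $(\varepsilon,\lambda)\to 0$. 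For the Hopf bifurcation the Jacobian reads
\[
J=\begin{pmatrix} 2x_e & -1\\ \varepsilon\,\partial_x g(p_e,\lambda) & \varepsilon\,\partial_y g(p_e,\lambda)\end{pmatrix},
\]
and using $\partial_x g(0,0,0)=g_1(0,0,0)=1$ together with $\partial_y g(0,0,0)=g_3(0,0,0)=a_2$ one obtains $\det J=\varepsilon+\mathcal{O}(\varepsilon\lambda)>0$ and $\operatorname{tr} J = 2x_e(\lambda)+\varepsilon a_2 + \mathcal{O}(\varepsilon\lambda+\lambda^2)$. A second implicit function theorem argument applied to $\operatorname{tr} J=0$, expanded in $\sqrt{\varepsilon}$, yields $\lambda_H(\sqrt{\varepsilon})=-\tfrac{a_2}{2}\varepsilon+\mathcal{O}(\varepsilon^{3/2})$; stability for $\lambda<\lambda_H$ follows from $\operatorname{tr} J<0$ there, and transversality of the eigenvalue crossing together with $\det J>0$ confirms that the crossing is a genuine Hopf bifurcation.

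For the maximal canard value $\lambda_c$ the analysis must be carried out in the rescaling chart $K_2$. Fenichel theory extends $\mathcal{C}_{\varepsilon}^{a}$ and $\mathcal{C}_{\varepsilon}^{r}$ through $K_1$ up to $r_1=0$, and the chart transition $K_1\to K_2$ carries them into $K_2$, where after desingularization the flow becomes
\[
x_2' = -y_2+x_2^2,\qquad y_2' = x_2-\lambda_2+\mathcal{O}(r_2),
\]
with $r_2=\sqrt{\varepsilon}$ and $\lambda_2=\lambda/\sqrt{\varepsilon}$. At $r_2=0,\ \lambda_2=0$ this system admits an explicit canard solution connecting the attracting and repelling branches of $\mathcal{C}_{\partial}$, and a Melnikov-type integral along this solution measures the signed distance between the continuations of $\mathcal{C}_{\varepsilon}^{a}$ and $\mathcal{C}_{\varepsilon}^{r}$ along a transverse section. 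The integral is monotone and linear to leading order in $\lambda_2$, so it vanishes at a unique value which in the original parameters translates to $\lambda_c(\sqrt{\varepsilon})=\tfrac{a_1-a_2}{4}\varepsilon+\mathcal{O}(\varepsilon^{3/2})$, the coefficient being obtained by evaluating the contributions of $\partial_x g_1$ and $g_3$ along the singular canard.

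The first two parts reduce to standard implicit function theorem computations once the blow-up is in place. The main obstacle is the maximal canard: one has to match the extensions of $\mathcal{C}_{\varepsilon}^{a,r}$ through the chart transition $K_1\to K_2$, control the $\mathcal{O}(r_2)$ perturbation uniformly on a semi-infinite piece of the singular canard, and compute the Melnikov integral explicitly to extract the coefficient $(a_1-a_2)/4$. This is the technical heart of \cite{KruSzm2,KruSzm3} and requires a careful normal-form preparation of the vector field along the singular canard solution before the integral can be evaluated in closed form.
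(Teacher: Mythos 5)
The paper does not prove Theorem~\ref{Thm:class_canard1}; it simply quotes the result from Krupa--Szmolyan (\cite[Theorems~3.1--3.2]{KruSzm2}), so there is no in-text proof to compare against. Your sketch correctly reproduces the standard Krupa--Szmolyan strategy for that reference: the implicit function theorem applied to $g(x,x^2,\lambda)=0$ giving the unique equilibrium branch, the trace/determinant computation of the $2\times 2$ Jacobian locating $\lambda_H(\sqrt{\varepsilon})=-\tfrac{a_2}{2}\varepsilon+\mathcal{O}(\varepsilon^{3/2})$, and the blow-up to chart $K_2$ combined with a Melnikov-type distance computation along the singular canard to extract $\lambda_c(\sqrt{\varepsilon})$. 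This matches the argument in the cited work, and the parts you flag as the technical heart (uniform control of the $\mathcal{O}(r_2)$ remainder along the singular canard and the explicit evaluation of the Melnikov integral to obtain the coefficient $(a_1-a_2)/4$) are indeed the substantial steps in \cite{KruSzm2} that your outline correctly defers to.
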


\begin{theorem}\label{Thm:class_canard2}\cite[Theorems~4.1]{KruSzm2}
	Suppose $\varepsilon_0$, $\lambda_0$ are sufficiently small and consider $V=V_{\varepsilon_0}$ as defined in Definition~\ref{Def:nbhd}.
	Fix $\varepsilon\in (0,\varepsilon_0]$. Then the following statements hold:
	
	(i)		For $\lambda\in (-\lambda_0,\lambda_H(\sqrt{\varepsilon})]$ all orbits starting in $V$ converge to $p_e$ or leave $V$.
	
	(ii)	There exists a curve $\lambda=\lambda_{sc}(\sqrt{\varepsilon})$\index{t@$\lambda_{sc}(\sqrt{\varepsilon})$| {Theorem~\ref{Thm:class_canard2}, }} and a constant $K>0$, with
	$$
	0<\lambda_{c}(\sqrt{\varepsilon}) - \lambda_{sc}(\sqrt{\varepsilon}) = \mathcal{O}(e^{-\frac{K}{\varepsilon}}),
	$$
	such that for each $\lambda\in (\lambda_{H}(\sqrt{\varepsilon}),\lambda_{sc}(\sqrt{\varepsilon}))$ the system \eqref{Eq:class_blowup_x}--\eqref{Eq:class_blowup_lamb} has a unique attracting limit cycle $\Gamma_{(\lambda,\varepsilon)}$\index{u@$\Gamma_{(\lambda,\varepsilon)}$| {Theorem~\ref{Thm:class_canard2}, }} contained in $V$. All orbits starting in $V$, except for $p_e$, either leave $V$ or are attracted to $\Gamma_{(\lambda,\varepsilon)}$.
	
	(iii)	For $\lambda\in (\lambda_{sc},\lambda_0]$ all orbits starting in $V$, except for $p_e$, leave $V$.
\end{theorem}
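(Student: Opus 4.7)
The plan is to combine the geometric desingularization through $\Psi$ and the charts $K_1, K_2$ with classical planar bifurcation analysis and a slow divergence (way-in/way-out) argument. For part~(i), I would observe that for $\lambda < \lambda_H(\sqrt{\varepsilon})$ the equilibrium $p_e$ is linearly asymptotically stable by Theorem~\ref{Thm:class_canard1}, and verify that the Andronov--Hopf bifurcation at $\lambda = \lambda_H(\sqrt{\varepsilon})$ is supercritical via a first Lyapunov coefficient computation performed in the rescaled coordinates of $K_2$, where incipient Hopf cycles carry amplitude $\mathcal{O}(\sqrt{\varepsilon})$; under the sign conditions on $g$, the coefficient is negative, so $p_e$ still attracts at $\lambda = \lambda_H(\sqrt{\varepsilon})$. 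Since the flow is planar, Poincaré--Bendixson then forces every orbit in $V$ either to converge to $p_e$ or to exit $V$, as long as no limit cycle sits in $V$; this is secured by analysing the Poincaré return map on a transverse section through $V$ and excluding closed orbits via the way-in/way-out balance whenever the contraction along $\mathcal{C}_{\varepsilon}^a$ dominates.

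For part~(ii), I would lift the flow into the rescaling chart $K_2$, where at $r_2 = 0$ the system reduces to the Riccati-type equation $x_2' = -y_2 + x_2^2$, $y_2' = -1$; this integrable skeleton generates a one-parameter family of canard candidates. The slow manifolds $\mathcal{C}_{\varepsilon}^a$ and $\mathcal{C}_{\varepsilon}^r$ extend as invariant curves into $K_2$, and their signed distance at a convenient cross-section defines a smooth splitting function $d(\lambda, \sqrt{\varepsilon})$ with $d(\lambda_c(\sqrt{\varepsilon}), \sqrt{\varepsilon}) = 0$ and $\partial_\lambda d \neq 0$. For $\lambda$ slightly above $\lambda_H(\sqrt{\varepsilon})$, a trajectory starting near $p_e$ spirals outward, joins $\mathcal{C}_{\varepsilon}^a$, follows it toward the fold, transitions along $\mathcal{C}_{\varepsilon}^r$ for a distance controlled by $d$, then jumps and returns; the first-return map on a cross-section $\Sigma \subset V$ is strongly contracting transversely to the canard trajectory, and its unique fixed point is the cycle $\Gamma_{(\lambda,\varepsilon)}$.

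The sharp estimate $\lambda_c(\sqrt{\varepsilon}) - \lambda_{sc}(\sqrt{\varepsilon}) = \mathcal{O}(e^{-K/\varepsilon})$ is the main technical obstacle. I would derive it from the slow divergence (Pontryagin) integral: a trajectory riding $\mathcal{C}_{\varepsilon}^a$ from $y = \rho^2$ down to the fold accumulates contraction of order $\exp(-I^a/\varepsilon)$, while a canard segment along $\mathcal{C}_{\varepsilon}^r$ expands at rate $\exp(+I^r(\lambda)/\varepsilon)$, with $I^r(\lambda_c) = I^a$ expressing the existence of the maximal canard. The canard limit cycle fits inside $V$ precisely while $I^a - I^r(\lambda) \geq 0$, and since $I^r$ depends smoothly and monotonically on $\lambda$, the saturation value $\lambda_{sc}$ lies exponentially close to $\lambda_c$. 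Part~(iii) is then immediate: for $\lambda > \lambda_{sc}(\sqrt{\varepsilon})$ the expansion along $\mathcal{C}_{\varepsilon}^r$ overwhelms the contraction along $\mathcal{C}_{\varepsilon}^a$, so the would-be periodic orbit exits $V$, leaving $p_e$ as the only bounded recurrent object. The principal difficulty throughout is the exponentially sharp control of the Poincaré map and of $d$ via asymptotic matching between the charts $K_1$ and $K_2$, realised through carefully constructed invariant wedges in the blow-up coordinates.
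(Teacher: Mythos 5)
The statement you are asked to prove is Theorem~\ref{Thm:class_canard2}, which the paper does not prove at all: it is quoted verbatim, with citation, from Krupa and Szmolyan~\cite{KruSzm2} as background material. The paper then builds on it (Theorems~\ref{Thm:canard1}--\ref{Thm:canard3}) for the codimension-zero case, but the classical result itself is taken as given. So there is no in-paper proof to compare against; the relevant comparison is to the strategy in the cited reference, and your sketch does track it faithfully: desingularization via the rescaling chart $K_2$, Hopf analysis of $p_e$, the slow-manifold splitting function vanishing at $\lambda_c$, uniqueness of the canard cycle through strong contraction of the return map, and the slow divergence (way-in/way-out) integral producing the exponentially thin band $\lambda_c - \lambda_{sc} = \mathcal{O}(e^{-K/\varepsilon})$.

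Two points where your sketch is imprecise and would need more care if written out. First, supercriticality of the Hopf bifurcation at $\lambda_H(\sqrt{\varepsilon})$ is not automatic in the Krupa--Szmolyan normal form; their general setup allows both signs of the relevant coefficient, and it is precisely the restriction here to $g = xg_1 - \lambda g_2 + yg_3$ with $g_1, g_2 = 1 + \mathcal{O}(\cdot)$ and $a_1, a_2 > 0$ that pins the sign and makes the bifurcation supercritical; you should actually carry out the first Lyapunov coefficient computation (in $K_2$) rather than gesture at ``the sign conditions on $g$.'' Second, your explanation of part~(iii) (``expansion overwhelms contraction, so the would-be periodic orbit exits $V$'') conflates two things: for $\lambda \in (\lambda_{sc}, \lambda_c)$ a canard cycle does still exist for the flow, but during the canard explosion it grows beyond the fixed neighbourhood $V$, while for $\lambda > \lambda_c$ the maximal canard connection is broken and there is no bounded cycle at all. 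Both sub-cases give the same conclusion for $V$, but the mechanisms are distinct and worth separating in a complete argument.
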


\section{Introduction to the canard case}\label{Sec:canard}

As for the fold, we now consider a two-dimensional critical manifold. While the dynamics for the fold remained relatively simple, more involved phenomena appear in the canard case.
Consider the following fast-slow system:
\index{v@new canard| {\eqref{Eq:blowup_x}--\eqref{Eq:blowup_lamb}, }}
\begin{alignat}{2}
x' &= \left\{\begin{matrix}
-y + x^2&& \text{for } y< x^2,\\
0&& \text{for } y\geq x^2,
\end{matrix}\right.\label{Eq:blowup_x}\\
y' &= \varepsilon g(x,y,\lambda),\label{Eq:blowup_y}\\
\varepsilon' &= 0 ,\label{Eq:blowup_eps}\\
\lambda' &= 0,\label{Eq:blowup_lamb}
\end{alignat}
where again $g=xg_1- \lambda g_2 + yg_3$ is $\mathrm{C}^r$-smooth for $r\geq 3$, $g_i=g_i(x,y,\lambda)$ for $i=1,\ldots,3$, and with $g_1,g_2 = 1+\mathcal{O}(x,y,\lambda)$.
Also in this case we denote $a_1=\partial_x g_1(0,0,0), a_2=g_3(0,0,0)$ and assume $a_1,a_2>0$.
Again, $\varepsilon\in (0,\varepsilon_0]$ and $\lambda\in [-\lambda_0,\lambda_0]$ for certain values $\varepsilon_0,\lambda_0 >0$.
The critical manifold is given by the two-dimensional set $\mathcal{C}_0=\{(x,y):\, y\geq x^2\}$, with lower boundary $\mathcal{C}_{\partial}=\{(x,y):\, y= x^2\}$, see Figure~\ref{fig:02}.

\begin{figure}
	[htbp]
	\centering
	\begin{overpic}[scale=0.3]{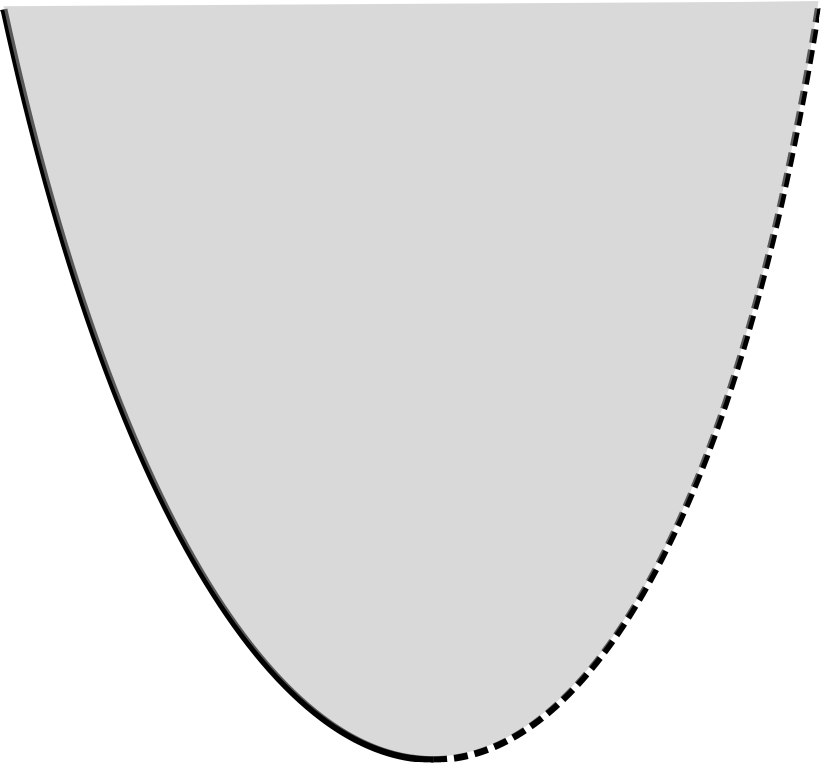}
		\put(50,40){\scalebox{1.0}{$\mathcal{C}_0$}}
		\put(8,25){\scalebox{1.0}{$\mathcal{C}_{0}^a$}}
		\put(86,25){\scalebox{1.0}{$\mathcal{C}_{0}^r$}}
		\put(36,-15){
			\begin{tikzpicture}
			\filldraw (0,0) circle (1pt) node[align=left,   below]{\scalebox{1.0}{$(0,0)$}};
			\end{tikzpicture}
		}
		\put(-40,-5){
			\begin{tikzpicture}
			\draw[->] (-1,0) -- (0,0) node[anchor = north]{$x$};
			\draw[->] (-1,0) -- (-1,1) node[anchor = west]{$y$};
			\end{tikzpicture}
		}
	\end{overpic}
	
	\caption{\\
		\label{fig:02}The two-dimensional critical manifold $\mathcal{C}_0=\{(x,y)\in \mathbb{R}^2: y\geq x^2\}$ is depicted in gray. Its lower boundary $\mathcal{C}_{\partial}$ is divided into the attracting branch $\mathcal{C}_{0}^a=\{(x,y)\in \mathbb{R}^-\times \mathbb{R}: y= x^2\}$ (solid black) and the repelling branch $\mathcal{C}_{0}^r=\{(x,y)\in \mathbb{R}^+\times \mathbb{R}: y= x^2\}$ (dashed black).
		The canard point is located at $(0,0)$.
	}
\end{figure}

\begin{definition}
	In the following, we will frequently compare the relative position of two geometric sets $A,B\subset\mathbb{R}^2$ in the plane. 
	We say that set $A$ is (locally) located to the left of set $B$ if for any $c\in \mathbb{R}$:
	$$
	a\leq b,\quad \forall a\in A\cap \{(x,c):\, x\in \mathbb{R}\},\, b\in B\cap \{(x,c):\, x\in \mathbb{R}\}.
	$$
	In this case, $B$ is (locally) located to the right of $A$. We say that $A$ is totally located to the left of $B$ if there exists $x_{\mathrm{sep}}$ such that $\{(x_{\mathrm{sep}},y):\, y\in \mathbb{R}\}$ is locally located to the right of $A$ and to the left of $B$.
	Similarly, we say that $A$ lies (locally) below $B$ if for any $c\in \mathbb{R}$:
	$$
	a\leq b,\quad \forall a\in A\cap \{(c,y):\, y\in \mathbb{R}\},\, b\in B\cap \{(c,y):\, y\in \mathbb{R}\}.
	$$
	In this case, $B$ is (locally) located above $A$. We say that $A$ is totally located below $B$ if there exists $y_{\mathrm{sep}}$ such that $\{(x,y_{\mathrm{sep}}):\, x\in \mathbb{R}\}$ is locally located above $A$ and below $B$.
	If not further specified, we always mean the local definition, see Figure~\ref{Fig:RelativePosition}.
\end{definition}

\begin{figure}[H]
	\centering
	
	\subfloat[]
	{\begin{tikzpicture}[scale=0.90]
		\draw[->] (0,0) -- (4,0) coordinate (x axis);
		\draw[->] (0,0) -- (0,3) coordinate (y axis);
		\draw (1,0.8) circle [radius=0.5] node[] {$A$};
		\draw (3,2.2) circle [radius=0.5] node[] {$B$};
		\draw[dashed] (0.1,1.5) -- (4,1.5) node[anchor=north, pos=1]{$C$};
		\draw[dashed] (2,0.1) -- (2,3) node[anchor=east, pos=1]{$D$};
		\end{tikzpicture}}
	\subfloat[]
	{\begin{tikzpicture}[scale=0.90]
		\draw[->] (0,0) -- (4,0) coordinate (x axis);
		\draw[->] (0,0) -- (0,3) coordinate (y axis);
		\draw (1,1.2) circle [radius=0.5] node[] {$A$};
		\draw (3,1.8) circle [radius=0.5] node[] {$B$};
		\draw[dashed] (0.1,1.5) -- (4,1.5) node[anchor=north, pos=1]{$C$};
		\draw[dashed] (2,0.1) -- (2,3) node[anchor=east, pos=1]{$D$};
		\end{tikzpicture}}
	\subfloat[]
	{\begin{tikzpicture}[scale=0.90]
		\draw[->] (0,0) -- (4,0) coordinate (x axis);
		\draw[->] (0,0) -- (0,3) coordinate (y axis);
		\draw (2,1.2) circle [radius=0.5] node[] {$B$};
		\draw (2.2,2.2) arc (80:280:10mm) node[anchor=west, pos=0]{$A$};
		\draw[dashed] (0.1,1.5) -- (4,1.5) node[anchor=north, pos=1]{$C$};
		\draw[dashed] (1.8,0.1) -- (1.8,3) node[anchor=east, pos=1]{$D$};
		\end{tikzpicture}}
	\caption{\label{Fig:RelativePosition}(a): Set $A$ is locally located to the left and to the right of set $B$, since there exists no horizontal line which intersects both sets, i.e. the dashed horizontal line $C$ separates $A$ and $B$. Similarly, $A$ is locally located above and below $B$, since there exists a separating vertical line $D$ (dashed). However, totally, set $A$ is located to the left of set $B$ but not to the right, since $A$ lies locally on the left side of the separating line $D$, while $B$ lies locally on the right side of $D$. Similarly, set $A$ is totally located below set $B$ but not above.
		(b): Set $A$ is locally located to the left of set $B$ but not to the right, since for example the dashed line $C$ intersects $A$ and $B$ and the $x$-component of all intersection points with $A$ are strictly smaller than those with $B$.
		At the same time, $A$ is still also totally located to the left of set $B$ and not to the right, since $A$ lies locally on the left side of the separating dashed line $D$, while $B$ lies locally on the right side of $D$.
		As in (a), $A$ is locally below and above $B$, since the vertical line $D$ separates both sets.
		However, totally, $A$ lies neither above nor below $B$, because there exists no horizontal line which lies locally above (below) $A$ and below (above) $B$.
		(c): $A$ lies locally to the left of $B$ but not totally. At the same time, $A$ lies neither locally nor totally below or above $B$.}
\end{figure}
For fixed $\varepsilon$ and all $\lambda\in [-\lambda_0,\lambda_0]$, in contrast to the unique equilibrium $p_e$ in Theorem~\ref{Thm:class_canard1}, it turns out that system \eqref{Eq:blowup_x}--\eqref{Eq:blowup_lamb} has a unique curve $\Gamma_e=\Gamma_{e,\lambda}$ of
equilibria emanating from $p_e$. This curve can locally be written as the graph of a function $(x,u_e(x))$.
Since $x\mapsto u_e(x)$ is not necessarily one-to-one, the concrete curvature of $g$ around $\Gamma_e$ is crucial for the behaviour of system \eqref{Eq:blowup_x}--\eqref{Eq:blowup_lamb} close to $\Gamma_e$. Therefore, we consider a small neighbourhood $U^{0}=U^{0}(\varepsilon,\lambda)$ around $\Gamma_e$.
We will show the existence of some $\lambda_*\in (\lambda_{H},\lambda_{sc})$ of the following kind: For $\lambda<\lambda_*$, $U^{0}$ can be chosen such that in $V_{2,\varepsilon}$, the left boundary of $U^{0}$ has distance of order $\mathcal{O}(\varepsilon +\sqrt{\varepsilon}\lambda)$ to $\Gamma_e$, while the right boundary of $U^{0}$ has distance of order $\mathcal{O}(\varepsilon^{3/2}+{\sqrt{\varepsilon}}\lambda)$ to $\Gamma_e$.
For $\lambda>\lambda_*$, $U^{0}$ has width of order $\mathcal{O}(\varepsilon^{3/2}+{\sqrt{\varepsilon}}\lambda)$ in $V_{2,\varepsilon}$.
For all $\lambda\in [-\lambda_0,\lambda_0]$, $U^{0}$ has width of order $\mathcal{O}((u_e(x))^2 + \lambda u_e(x))$ for $(x,u_e(x))\in V_{1,\varepsilon}$, see Section~\ref{Sec:Domains} for the concrete definition.
We will only consider the dynamics outside of $U^{0}$.
It will be crucial in several proofs that the boundaries of $U^{0}$ are right-curved.
If this is the case, we call $U^{0}$ right-curved.
We also write $U_0^{-},U_0^{+}$ for the left and right parts of $(V\cap \mathcal{C}_0)\backslash U^{0}$, see Section~\ref{Sec:Domains} for the concrete definition and compare Figure~\ref{Fig:LargeLambda}.
Furthermore, the slow manifold is not longer given by a one-dimensional set, but has codimension zero just as the critical manifold. For this reason, we denote by $\mathcal{C}_{\varepsilon}^a$ and $\mathcal{C}_{\varepsilon}^r$ those parts of the attracting and repelling branches of the slow manifold of \eqref{Eq:class_blowup_x}--\eqref{Eq:class_blowup_lamb} which are located below $\mathcal{C}_0$.

The adaption of Theorems~\ref{Thm:class_canard1}--\ref{Thm:class_canard2} to the system \eqref{Eq:blowup_x}--\eqref{Eq:blowup_lamb} is the following, see also Figure~\ref{Fig:numerics}:
\begin{theorem}\label{Thm:canard1}
	Suppose $\varepsilon_0$, $\lambda_0$ are sufficiently small and consider $V=V_{\varepsilon_0}$ as defined in Definition~\ref{Def:nbhd}.
	Then for all fixed $\varepsilon\in (0,\varepsilon_0]$, system \eqref{Eq:class_blowup_x}--\eqref{Eq:class_blowup_lamb} has exactly one equilibrium $p_e\in V\cap\mathcal{C}_{\partial}$, which converges to the canard point as $(\varepsilon,\lambda)\rightarrow 0$.
	Moreover, there exists a unique curve of equilibria $\Gamma_e=\Gamma_{e,\lambda}=\{(x,u_e(x))\}$\index{w@$\Gamma_e=\Gamma_{e,\lambda}$| {Theorem~\ref{Thm:canard1}, }} \index{x@$x\mapsto u_e(x)$| {Theorem~\ref{Thm:canard1}, }} in $V\cap \mathcal{C}_0$, emanating from $p_e$, together with a right-curved neighbourhood $U^{0}=U^{0}(\varepsilon,\lambda)$ around $\Gamma_e$. There exists a curve $\lambda_H(\sqrt{\varepsilon}) = -\frac{a_2}{2}\varepsilon + \mathcal{O}(\varepsilon^{\frac{3}{2}})$ such that for each value $\lambda\in (-\lambda_0,\lambda_H(\sqrt{\varepsilon})]$ all trajectories starting in $U_0^{-}\cup U_0^{+}$ leave $V$ in $U_0^{+}$.
	In particular, all solutions starting in $U_0^{-}$ leave $\mathcal{C}_0$, surround $\Gamma_e$ close to $p_e$, return to $\mathcal{C}_0$ in $U_0^{+}$ and leave $V$ in $U_0^{+}$.
	
	There exists a smooth function 
	$$\lambda_c(\sqrt{\varepsilon})=-\left(\frac{a_2}{2} + \frac{-2a_1 -2a_2}{8}\right)\varepsilon + \mathcal{O}(\varepsilon^{3/2})=\frac{a_1 - a_2}{4}\varepsilon + \mathcal{O}(\varepsilon^{3/2}),$$
	such that $\mathcal{C}_{\varepsilon}^a$ connects to $\mathcal{C}_{\varepsilon}^r$ if and only if $\lambda=\lambda_c(\sqrt{\varepsilon})$. Moreover, $\mathcal{C}_{\varepsilon}^a$ and $\mathcal{C}_{\varepsilon}^r$ do not intersect $\mathcal{C}_0$ for $\lambda=\lambda_c(\sqrt{\varepsilon})$.
	In particular, maximal canard solutions exist if and only if $\lambda=\lambda_c(\sqrt{\varepsilon})$, and those solutions are located below $\mathcal{C}_0$ in $V$.
\end{theorem}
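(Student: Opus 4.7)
The plan is to leverage the piecewise structure of \eqref{Eq:blowup_x}--\eqref{Eq:blowup_lamb}: in $\{y<x^2\}$ the system coincides with the classical canard normal form \eqref{Eq:class_blowup_x}--\eqref{Eq:class_blowup_lamb}, so Theorem~\ref{Thm:class_canard1} applies verbatim to any trajectory that stays below $\mathcal{C}_{\partial}$; in the interior of $\mathcal{C}_0$ the condition $f\equiv 0$ makes the flow purely vertical with $y' = \varepsilon g(x,y,\lambda)$.

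I first classify equilibria. The open region $\{y<x^2\}$ has no equilibrium because $f=0$ there would force $y=x^2$, a contradiction. Equilibria in $\{y>x^2\}$ are the zeros of $g(x,y,\lambda)$, and since $\partial_y g(0,0,0)=a_2>0$, the implicit function theorem yields a unique smooth graph $y=u_e(x)$, giving the curve $\Gamma_e$; extending this graph down to $\mathcal{C}_{\partial}$ produces a boundary point that must coincide with the classical equilibrium $p_e$ from Theorem~\ref{Thm:class_canard1}, in particular inheriting the asymptotics $p_e\to 0$ as $(\varepsilon,\lambda)\to 0$. The neighbourhood $U^{0}$ is then built chart-by-chart around $\Gamma_e$ with the widths prescribed in the statement, and its right-curvedness follows from computing the curvature of the level sets $\{g=\mathrm{const}\}$ in $K_1$ and $K_2$ using the expansions of $g$ and $u_e$.

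Because $\partial_x g=a_1+\mathcal{O}(x,y,\lambda)$ with $a_1>0$, one has $g<0$ strictly on $U_0^{-}$ and $g>0$ strictly on $U_0^{+}$, so that orbits in $\mathcal{C}_0\cap U_0^{-}$ move vertically downward and exit through $\mathcal{C}_{\partial}$, while orbits in $\mathcal{C}_0\cap U_0^{+}$ move vertically upward and leave $V$ inside $U_0^{+}$. Fix $\lambda\in(-\lambda_0,\lambda_H(\sqrt{\varepsilon})]$ and an orbit starting in $U_0^{-}$; it enters $\{y<x^2\}$ at some $(x_0,x_0^2)$ with $x_0$ to the left of $p_e$. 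Below $\mathcal{C}_{\partial}$ the classical dynamics take over and by Theorem~\ref{Thm:class_canard1} together with part~(i) of Theorem~\ref{Thm:class_canard2}, $p_e$ is stable, so the orbit stays in a neighbourhood of $p_e$ and surrounds $\Gamma_e$ from below. Using $x'>0$ strictly below $\mathcal{C}_{\partial}$, the orbit is swept to the right and guided past the fold by the attracting Fenichel manifold $\mathcal{C}_\varepsilon^a$; a Poincar\'e-section argument on the boundary of $U^0$, together with the strict sign of $g$ on $U_0^{+}$, forces the next transversal re-entry into $\mathcal{C}_0$ to occur strictly to the right of $U^0$, i.e.~inside $U_0^{+}$, and the previous observation finishes the claim. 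The existence, asymptotic expansion, and canard-connection property of $\lambda_c(\sqrt{\varepsilon})$ then follow immediately from Theorem~\ref{Thm:class_canard1} applied to the classical dynamics restricted to $\{y<x^2\}$.

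The main obstacle is the final statement that at $\lambda=\lambda_c(\sqrt{\varepsilon})$ the connected maximal canard lies strictly in $\{y<x^2\}$ throughout $V$. I would handle this in the rescaling chart $K_2$, where the maximal canard is a heteroclinic orbit of the desingularized classical system and the slow manifolds $\mathcal{C}_\varepsilon^{a,r}$ admit an explicit Riccati expansion around $\lambda_c$ available from \cite{KruSzm2}. A careful comparison gives $y_2<x_2^2$ along the entire heteroclinic; matching back through $K_1$ via Fenichel theory on the normally hyperbolic part of $\mathcal{C}_{\partial}$ propagates the strict inequality to all of $V$, and an invariance argument in the spirit of Lemma~\ref{Lem:fold} confirms that the maximal canard of the piecewise system cannot leak into $\mathcal{C}_0$.
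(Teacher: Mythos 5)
Your overall strategy matches the paper's: exploit that the vector field agrees with the classical canard system on $\{y<x^2\}$, flows purely vertically on $\mathcal C_0$, classify equilibria via the implicit function theorem applied to $g$, and then reduce the maximal-canard separation to the explicit parabolic orbit $\gamma_{c,2}$ in the rescaling chart $K_2$ plus a Fenichel/expansion argument in $K_1$ (which the paper isolates in Lemma~\ref{Lem:Slow_man}). Those parts are sound, modulo a small slip: $\partial_x g(0,0,0)=g_1(0,0,0)=1$, not $a_1$; the constant $a_1=\partial_x g_1(0,0,0)$ only enters at the next order. This does not change your conclusion about the sign of $g$ on $U_0^\pm$, since $1>0$ as well.

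The genuine gap is in the claim that an orbit dipping below $\mathcal C_\partial$ from $U_0^-$ ``re-enters $\mathcal C_0$ strictly to the right of $U^0$.'' You state that ``a Poincar\'e-section argument on the boundary of $U^0$, together with the strict sign of $g$ on $U_0^+$, forces'' this, but that is precisely the hardest quantitative step of the theorem and your proposal does not supply it. The difficulty is that the re-entry point after a half-turn around the (stable, for $\lambda\le\lambda_H$) spiral equilibrium $p_e$ is \emph{closer} to $p_e$ than the entry point, and whether it lands to the right of $U^0$ or inside $U^0$ depends on a competition between the distance $c$ from which the orbit started, the spiral contraction $e^{(\lambda_2+r_2 a_2/2)\pi/k}$, and the higher-order $\mathcal O(r_2^2+r_2|\lambda_2|+\lambda_2^2)$ corrections. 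The paper resolves this by an explicit linearization of \eqref{Eq:blowup_x_2}--\eqref{Eq:blowup_y_2} at $p_e$, computing eigenvalues $\lambda_2+\tfrac{r_2a_2}{2}\pm ik$ and eigenvectors, tracking the real solution over the half-period $t\approx\pi/k$, and showing that the entry at $x_2(0)=\lambda_2-c$ returns to $\mathcal C_\partial$ at $(\lambda_2+c,(\lambda_2+c)^2)+\mathcal O(2)$; this only lands in $U_0^+$ because $U^0$ is chosen \emph{asymmetric}, with left width $\mathcal O(\sqrt{\varepsilon}+|\lambda|)$ but right width only $\mathcal O(\varepsilon+|\lambda|)$ in chart $K_2$ (and one must also verify $c\ne\mathcal O(2)$ to keep the linearization honest). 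This is the content of Lemma~\ref{Lem:slow_flow}(i), and without an argument of this precision your Poincar\'e-map assertion is unjustified: the return map could a priori send a point from $U_0^-$ into the interior of $U^0$, where the theorem makes no claim, or even back into $U_0^-$ for repeated drops. Supplying this linearization, and explicitly justifying why the asymmetric widths are the ones that make the return map land in $U_0^+$, is what is needed to close the proof.
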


Theorem~\ref{Thm:class_canard2} claims the existence of limit cycles $\Gamma_{(\lambda,\varepsilon)}$ for each $\lambda\in (\lambda_{H}(\sqrt{\varepsilon}),\lambda_{sc}(\sqrt{\varepsilon}))$. 
In system \eqref{Eq:blowup_x}--\eqref{Eq:blowup_lamb}, the same orbits are observed but only below $\mathcal{C}_0$.
Therefore, we write $\tilde{\Gamma}_{(\lambda,\varepsilon)}:= (\mathbb{R}^2\backslash\mathcal{C}_0)\cap\Gamma_{(\lambda,\varepsilon)}$
\index{y@$\tilde{\Gamma}_{(\lambda,\varepsilon)}$| {after Theorem~\ref{Thm:canard1}, }} for the part of $\Gamma_{(\lambda,\varepsilon)}$ which is located below $\mathcal{C}_0$.
Since $x'=0$ in $\mathcal{C}_0$, each limit cycle of the classical canard system perturbs to the half orbit $\tilde{\Gamma}_{(\lambda,\varepsilon)}$ and its vertical extensions.
More precisely, consider the left and right intersection points $p_-=(p_-^x,p_-^y)$ and $p_+=(p_+^x,p_+^y)$ in $\Gamma_{(\lambda,\varepsilon)}\cap \mathcal{C}_{\partial}$, i.e. $p_-^x<p_+^x$.
Moreover, denote by
\begin{equation}\label{Def:P_-,P_+}
P_-:= \{(p_-^x,y):\, y\in \mathbb{R}\},\qquad P_+:= \{(p_+^x,y):\, y\in \mathbb{R}\}\index{z@$p_-, p_+, P_-, P_+$| {\eqref{Def:P_-,P_+}, }}
\end{equation}
the vertical extensions of $\Gamma_{(\lambda,\varepsilon)}$.
Then in system \eqref{Eq:blowup_x}--\eqref{Eq:blowup_lamb}, the limit cycle $\Gamma_{(\lambda,\varepsilon)}$ can be rediscovered in the form $$\tilde{\Gamma}_{(\lambda,\varepsilon)}\cup [\mathcal{C}_0\cap (P_-\cup P_+)].$$
The canard explosion for system \eqref{Eq:blowup_x}--\eqref{Eq:blowup_lamb} looks as follows, see also Figure~\ref{Fig:numerics}:

\begin{theorem}\label{Thm:canard2}
	Suppose $\varepsilon_0$, $\lambda_0$ are sufficiently small and consider $V=V_{\varepsilon_0}$ as defined in Definition~\ref{Def:nbhd}. Fix $\varepsilon\in (0,\varepsilon_0]$ and consider the notation from Theorem~\ref{Thm:canard1}. Then the following statements hold:
	
	(i)		For $\lambda\in (-\lambda_0,\lambda_H(\sqrt{\varepsilon})]$ all trajectories starting in $U_0^{-}\cup U_0^{+}$ leave $V$ in $U_0^{+}$.
	Moreover, all solutions starting in $U_0^{-}$ leave $\mathcal{C}_0$, surround $\Gamma_e$ close to $p_e$, return to $\mathcal{C}_0$ in $U_0^{+}$ and leave $V$ in $U_0^{+}$.
	
	(ii)	There exists a curve $\lambda=\lambda_{sc}(\sqrt{\varepsilon})$ and a constant $K>0$, with
	$$
	0<\lambda_{c}(\sqrt{\varepsilon}) - \lambda_{sc}(\sqrt{\varepsilon}) = \mathcal{O}(e^{-\frac{K}{\varepsilon}}),
	$$
	such that for each $\lambda\in (\lambda_{H}(\sqrt{\varepsilon}),\lambda_{sc}(\sqrt{\varepsilon}))$, system \eqref{Eq:blowup_x}--\eqref{Eq:blowup_lamb} has a unique attracting half cycle $\tilde{\Gamma}_{(\lambda,\varepsilon)}$ contained in $V\backslash\mathcal{C}_0$ and extended into $\mathcal{C}_0$ by $\mathcal{C}_0\cap P_-$ and $\mathcal{C}_0\cap P_+$. Moreover, all solutions of \eqref{Eq:blowup_x}--\eqref{Eq:blowup_lamb} starting in $V\backslash U^{0}$ and to the left of $P_-$ or below $\tilde{\Gamma}_{(\lambda,\varepsilon)}$ leave $V$ to the total right of $P_+$. All trajectories starting in $U_0^{-}$ and to the left of $P_-$ leave $V$ to the total right of $P_+$ and in $U_0^{+}$. Finally, all solutions starting in $V\backslash U^{0}$, between $P_-$ and $P_+$ and above $\tilde{\Gamma}_{(\lambda,\varepsilon)}$ leave $V$ in $U_0^{+}$ and to the total left of $P_+$.
	
	(iii)	For $\lambda\in (\lambda_{sc},\lambda_0]$, all orbits starting in $V\backslash U^{0}$, leave $V$.
	
	(iv) There exists some $\lambda_*\in (\lambda_{H}(\sqrt{\varepsilon}),\lambda_{sc}(\sqrt{\varepsilon}))$ such that for $\lambda<\lambda_*$, $U^{0}\cap V_{2,\varepsilon}$ has width $\mathcal{O}(\varepsilon +\sqrt{\varepsilon}\lambda)$, while
	for $\lambda>\lambda_*$, $U^{0}\cap V_{2,\varepsilon}$ has width of order $\mathcal{O}(\varepsilon^{3/2}+{\sqrt{\varepsilon}}\lambda)$.
	For all $\lambda\in [-\lambda_0,\lambda_0]$, $U^{0}\cap V_{1,\varepsilon}$ has width of order $\mathcal{O}((u_e(x))^2 + \lambda u_e(x))$ for $(x,u_e(x))\in V_{1,\varepsilon}$.
\end{theorem}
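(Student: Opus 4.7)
My plan is to leverage two structural observations. First, the vector field of \eqref{Eq:blowup_x}--\eqref{Eq:blowup_lamb} agrees with the classical canard system \eqref{Eq:class_blowup_x}--\eqref{Eq:class_blowup_lamb} on the open set $\{y<x^2\}=\mathbb{R}^2\setminus\mathcal{C}_0$, so Theorems~\ref{Thm:class_canard1}--\ref{Thm:class_canard2} (and Theorem~\ref{Thm:canard1}) are directly available there. Second, on $\mathcal{C}_0$ the fast equation collapses to $x'=0$, so every trajectory moves purely vertically with sign equal to $\operatorname{sgn}\,g(x,y,\lambda)$. Writing $g=xg_1-\lambda g_2+yg_3$ and using $a_1,a_2>0$ together with the implicit-function description $\Gamma_e=\{(x,u_e(x))\}$ of the zero set of $g$, I would first verify that $U^{0}$ can be chosen so that $g<0$ on $U_0^{-}$ and $g>0$ on $U_0^{+}$. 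Hence inside $U_0^{-}$ an orbit flows straight down and exits $\mathcal{C}_0$ through $\mathcal{C}_\partial$, while inside $U_0^{+}$ it flows straight up and exits $V$ through its upper boundary. This is the geometric skeleton on which everything is built.

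With this skeleton, parts (i) and (iii) reduce to a matching argument. For (i) ($\lambda\le\lambda_H$), an orbit starting in $U_0^{-}$ descends, crosses $\mathcal{C}_\partial$, and enters the classical regime; by Theorem~\ref{Thm:class_canard2}(i) together with the stability of $p_e$ from Theorem~\ref{Thm:class_canard1}, the orbit is attracted to $p_e$ and hence must re-cross $\mathcal{C}_\partial$ upward by spiraling around $p_e$. A sign check on $y'$ at the re-entry point shows the orbit lands in $U_0^{+}$ and is then carried upward out of $V$. Part (iii) follows analogously: for $\lambda>\lambda_{sc}$, Theorem~\ref{Thm:class_canard2}(iii) says every classical orbit (except $p_e$) leaves $V$, and the vertical argument in $\mathcal{C}_0$ handles any intermediate excursion into the critical manifold.

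Part (ii) is the most delicate. The classical limit cycle $\Gamma_{(\lambda,\varepsilon)}$ intersects $\mathcal{C}_\partial$ transversally at $p_\pm$, and its portion below $\mathcal{C}_\partial$ is exactly $\tilde{\Gamma}_{(\lambda,\varepsilon)}$. I would transfer the classical attraction as follows: a trajectory below $\mathcal{C}_0$ is attracted to $\Gamma_{(\lambda,\varepsilon)}$ and therefore to $\tilde{\Gamma}_{(\lambda,\varepsilon)}$; upon reaching $\mathcal{C}_\partial$ it enters $\mathcal{C}_0$ and is pushed vertically with $x$ frozen, so the exit $x$-coordinate of an orbit equals the $x$-value of its final crossing of $\mathcal{C}_\partial$, which is controlled by the attraction to $\Gamma_{(\lambda,\varepsilon)}$. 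The comparison of this exit $x$-value with $p_\pm^x$ yields the statements about exit direction relative to $P_\pm$, and the uniqueness of $\tilde{\Gamma}_{(\lambda,\varepsilon)}$ together with the value of $\lambda_{sc}(\sqrt{\varepsilon})$ are inherited directly from Theorem~\ref{Thm:class_canard2}(ii).

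Part (iv) is a quantitative estimate: the width of $U^{0}$ should be the level set $\{|g|<\delta\}$ for a threshold $\delta=\delta(\varepsilon,\lambda)$ chosen so that the vertical flow $\dot y=\varepsilon g$ is strong enough to eject an orbit from $U^{0}$ faster than the classical recurrence brings it back. In chart $K_2$ one has to leading order $g=\sqrt{\varepsilon}(x_2-\lambda_2)+\varepsilon a_2 y_2+\text{h.o.t.}$, so thresholds are linear in $(x_2,y_2)$ with slopes $\sqrt{\varepsilon}$ and $\varepsilon$, producing widths of order $\varepsilon+\sqrt{\varepsilon}\lambda$ or $\varepsilon^{3/2}+\sqrt{\varepsilon}\lambda$ depending on whether $\delta$ is dominated by the $\varepsilon$ or the $\varepsilon^{3/2}$ contribution; the transitional value $\lambda_*$ is then identified as the value at which these two contributions become comparable. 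In chart $K_1$ the pullback of the same threshold under $x=r_1 x_1,y=r_1^2$ yields the claimed $\mathcal{O}\bigl((u_e(x))^2+\lambda u_e(x)\bigr)$. I expect the main obstacle to be the simultaneous calibration of $\delta$: it must be small enough for (ii) to have content (so $U^{0}$ does not swallow $\tilde{\Gamma}_{(\lambda,\varepsilon)}$ or its vertical extensions) and large enough to dominate higher-order terms uniformly across both charts, with the correct matching of widths at the common boundary of $V_{1,\varepsilon}$ and $V_{2,\varepsilon}$.
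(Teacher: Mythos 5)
Your overall architecture matches the paper's: use the classical canard theorems in the region $\{y<x^2\}$ where the vector field coincides with the smooth one, exploit the purely vertical flow $x'=0$, $y'=\varepsilon g$ inside $\mathcal{C}_0$, and glue the two pictures along $\mathcal{C}_\partial$. Parts (ii)--(iii) of your sketch are in the same spirit as the paper's proof, which also transfers the attraction to $\Gamma_{(\lambda,\varepsilon)}$ to trajectories below $\mathcal{C}_0$ and then tracks the frozen $x$-coordinate upward through $\mathcal{C}_0$.

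However, there is a genuine gap where you write that ``a sign check on $y'$ at the re-entry point shows the orbit lands in $U_0^{+}$.'' This is where the entire difficulty of the theorem is concentrated and it is not a mere sign check. You need to know the actual $x$-coordinate at which a trajectory that exits $\mathcal{C}_0$ through $U_0^{-}$ near $\Gamma_e$ crosses $\mathcal{C}_\partial$ going upward again, and you then need that coordinate to sit strictly to the right of $U^{0}$. The paper establishes this in Lemma~\ref{Lem:slow_flow}(i) by linearizing \eqref{Eq:blowup_x_2}--\eqref{Eq:blowup_y_2} at $p_e$ in chart $K_2$, computing the complex eigenvalue $\mu=\lambda_2+\tfrac{r_2a_2}{2}+\mathrm{i}k$, and following the rotating solution for a half-turn ($t\approx\pi/k$) to show that an initial condition at horizontal distance $c$ to the left of $\Gamma_e$ returns to the parabola at distance $c+\mathcal{O}(r_2^2+|r_2\lambda_2|+\lambda_2^2)$ to the right. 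One must then calibrate the constants $C_5,C_6,C_7$ to ensure that the error term does not push the re-entry point back into $U^{0}$ and that higher-order terms of the full (nonlinear) system stay subordinate during the half-turn. None of this is captured by checking the sign of $y'$ at some unspecified point.

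This same gap undermines your treatment of (iv). You propose a single threshold $\delta$ on $|g|$ to delimit $U^{0}$, but the essential feature of the paper's construction (the set $\tilde{U}_2^2$ in \eqref{Def:tildeU_2^2}) is that for $\lambda<\lambda_*$ the neighbourhood is \emph{asymmetric}: in chart $K_2$ the left boundary sits at distance $\mathcal{O}(\sqrt{\varepsilon}+|\lambda|)$ from $\Gamma_e$ while the right boundary sits at the much smaller distance $\mathcal{O}(\varepsilon+|\lambda|)$. This asymmetry is not an artifact of the definition; it is forced by the half-turn estimate above. If you took a symmetric threshold at the larger scale, orbits starting just to the left of $U^{0}$ would re-enter inside $U^{0}$ rather than in $U_0^{+}$, and statement (i) would fail. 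If you took a symmetric threshold at the smaller scale, you could not guarantee $U^{0}$ contains the full zero set of $g$ once the $\mathcal{O}(r_2^2)$ corrections are accounted for, which would break the sign dichotomy $g<0$ on $U_0^{-}$, $g>0$ on $U_0^{+}$ that your whole argument rests on. To repair the proposal you would need to reproduce the quantitative rotation-and-re-entry estimate and then build the two widths (and the breakpoint $\lambda_*$ where $p_+$ leaves $P_{(x,y)}\Phi_2(U_2^2)$) directly from it; you should also note the role of Lemma~\ref{Lem:Slow_man}, which guarantees that $\mathcal{C}_\varepsilon^a,\mathcal{C}_\varepsilon^r$ lie strictly below $\mathcal{C}_0$ in $V\setminus V_{2,\varepsilon}$ and is used to justify the change-of-chart in (ii).
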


\begin{remark}
	Note that the adaption of Theorem~\ref{Thm:class_canard2}.iii to system \eqref{Eq:blowup_x}--\eqref{Eq:blowup_lamb} is more involved. Indeed, different phenomena can appear.
	One first naive idea for the corresponding behaviour would be the following:
	
	(iii)	For $\lambda\in (\lambda_{c},\lambda_0]$ all orbits starting in $V\backslash U^{0}$ leave $V$ in $V\backslash\mathcal{C}_0$ and to the total right of 
	$\{(p_e^x,y):\, y\in \mathbb{R}\}$.
	
	This would reflect the fact that in the classical canard case, for $\lambda\in (\lambda_{sc},\lambda_0]$, trajectories eventually end up on the right of the repelling slow manifold $\mathcal{C}_{\varepsilon}^r$ and are being repelled fast towards the right in $(x,y)$-phase space.
	However, it can happen that some solutions first spiral around the equilibrium $p_e$ before this happens.
	In system \eqref{Eq:blowup_x}--\eqref{Eq:blowup_lamb}, such solutions enter $U_0^{+}$ already before finishing the first half spiral and continue vertically upwards in $U_0^{+}$ until they leave $V$.
\end{remark}

An extension of Theorem~\ref{Thm:canard2}.iii is the following, see Figure~\ref{Fig:LargeLambda}
\begin{theorem}\label{Thm:canard3}
	Suppose $\varepsilon_0$, $\lambda_0$ are sufficiently small and consider $V=V_{\varepsilon_0}$ as defined in Definition~\ref{Def:nbhd}. Fix $\varepsilon\in (0,\varepsilon_0]$ and consider the notation from Theorems~\ref{Thm:canard1}--\ref{Thm:canard2}.
	
	For $\lambda\in (\lambda_{c},\lambda_0]$, there exists a vertical line $P_c=P_{c,\lambda}=\{(p_c^x,y):\, y\in \mathbb{R}\}$\index{za@$P_c=P_{c,\lambda}$| {Theorem~\ref{Thm:canard3} ,}} such that the following holds true: 
	All solutions starting in $V\backslash U^{0}$ and to the left of $P_c$ leave $V$ in $V\backslash\mathcal{C}_0$.
	All orbits starting in $\mathcal{C}_0\backslash U^{0}$ and to the right of $P_c$ leave $V$ in $U_0^{+}$.
	
	Note that the set of solutions starting in $V\backslash U^{0}$ and to the left of $P_c$ can be empty, since $P_c$ can be located to the left of $V$.
\end{theorem}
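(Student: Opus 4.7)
The plan is to leverage the classical canard analysis from Theorems~\ref{Thm:class_canard1}--\ref{Thm:class_canard2} together with the structural feature of~\eqref{Eq:blowup_x}--\eqref{Eq:blowup_lamb} that the flow coincides with the classical normal form below $\mathcal{C}_0$, while within $\mathcal{C}_0$ one has $x'=0$ and $y'=\varepsilon g$, with $g<0$ to the left of $\Gamma_e$ and $g>0$ to its right (since $\partial_x g(0,0,0)=1>0$). First I would define $P_c$ using the classical blow-up: for $\lambda\in(\lambda_c,\lambda_0]$, the attracting slow manifold $\mathcal{C}_\varepsilon^a$ sits strictly above $\mathcal{C}_\varepsilon^r$ past the canard region, so its extension (read off from the chart $K_2$ picture of~\eqref{Eq:class_blowup_x}--\eqref{Eq:class_blowup_lamb}) crosses $\mathcal{C}_\partial$ from below at a unique point $(p_c^x,(p_c^x)^2)$ whenever this crossing falls within the $x$-range of $V$, and I set $P_c:=\{(p_c^x,y):y\in\mathbb{R}\}$; for $\lambda$ large enough that $\mathcal{C}_\varepsilon^a$ exits $V$ through the right or bottom before meeting $\mathcal{C}_\partial$, I place $P_c$ to the left of $V$, making the first claim vacuous.

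For the first claim, orbits starting below $\mathcal{C}_0$ and to the left of $P_c$ evolve under the classical flow, and the definition of $p_c^x$ combined with $x'=-y+x^2>0$ below $\mathcal{C}_0$ forbids them from crossing $\mathcal{C}_\partial$ upward inside $V$, so they exit in $V\setminus\mathcal{C}_0$. Orbits starting in $U_0^{-}$ (which lies to the left of $\Gamma_e$, and hence to the left of $P_c$ whenever $P_c$ sits at or to the right of $U^{0}$) descend vertically because $g<0$ there, cross $\mathcal{C}_\partial$, and reduce to the previous case. For the second claim, orbits starting in $U_0^{+}$ satisfy $g>0$, move straight up, and leave $V$ through the top in $U_0^{+}$. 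Orbits starting in $U_0^{-}$ to the right of $P_c$—a situation possible only when $P_c$ lies to the left of $U^{0}$—first descend into $V\setminus\mathcal{C}_0$; by the choice of $P_c$ and the exponential attraction toward $\mathcal{C}_\varepsilon^a$, the classical flow deflects them back up through $\mathcal{C}_\partial$ before they leave $V$. The right-curvedness of $U^{0}$ together with the width bound from Theorem~\ref{Thm:canard2}(iv) should then yield that the re-entry into $\mathcal{C}_0$ happens strictly in $U_0^{+}$, after which vertical upward motion carries these orbits out of $V$ in $U_0^{+}$.

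The main technical obstacle is precisely this last subcase: showing that trajectories which leave $U_0^{-}$, traverse the canard region below $\mathcal{C}_0$, and eventually return to $\mathcal{C}_\partial$ from below do so strictly to the right of $U^{0}$. Controlling this return map requires a quantitative comparison, carried out in the charts $K_1$ and $K_2$, between the deflection amplitude of $\mathcal{C}_\varepsilon^a$ for $\lambda>\lambda_c$ and the $\mathcal{O}(\varepsilon^{3/2}+\sqrt{\varepsilon}\lambda)$ width of $U^{0}\cap V_{2,\varepsilon}$ guaranteed by Theorem~\ref{Thm:canard2}(iv). I expect this step to reuse the chart estimates already developed for the proof of Theorem~\ref{Thm:canard2}, in particular the exponential contraction toward $\mathcal{C}_\varepsilon^a$ coupled with the $K_2$-chart description of the fast deflection past the canard region.
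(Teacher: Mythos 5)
Your proposal starts from a wrong definition of $P_c$, and this is a genuine gap rather than a stylistic difference. You set $P_c$ at the point where (the extension of) $\mathcal{C}_{\varepsilon}^a$ crosses $\mathcal{C}_{\partial}$ from below, arguing from the ordering of $\mathcal{C}_{\varepsilon}^a$ and $\mathcal{C}_{\varepsilon}^r$ past the canard region. But the paper establishes (citing Lemma~\ref{Lem:Slow_man}, Lemma~\ref{Lem:slow_flow} and the maximal-canard part of the proof of Theorem~\ref{Thm:canard1}) that for $\lambda\in(\lambda_c,\lambda_0]$ the attracting branch $\mathcal{C}_{\varepsilon}^a$ does \emph{not} meet $\mathcal{C}_{\partial}$ anywhere in $V$; it stays in $V\backslash\mathcal{C}_0$ and exits $V$ without returning to the parabola. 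So the crossing you use to anchor $P_c$ does not exist inside $V$, and your fallback of pushing $P_c$ to the left of $V$ when no crossing is found cannot be right either, because the theorem requires that to the right of $P_c$ all orbits from $\mathcal{C}_0\setminus U^0$ re-enter $U_0^+$; this fails for initial points in $U_0^-$ far to the left, which track $\mathcal{C}_{\varepsilon}^a$ and escape below $\mathcal{C}_0$. The object that actually defines $P_c$ is the \emph{repelling} slow manifold $\mathcal{C}_{\varepsilon}^r$ traced in \emph{backward} time: it spirals around $p_e$ and eventually leaves $V$, producing a unique point $(p_c^x,p_c^y)\in U_0^-\cap\mathcal{C}_{\partial}$ separating orbits that, after dropping below $\mathcal{C}_0$, escape below (to the left of $P_c$) from those that are caught by the spiral and re-enter $\mathcal{C}_0$ in $U_0^+$ (to the right of $P_c$).

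A secondary but related misconception: you place $P_c$ either at/right of $U^0$ or to the left of $U^0$, and infer that ``$U_0^-$ to the right of $P_c$'' is only possible when $P_c$ sits left of $U^0$. In fact $(p_c^x,p_c^y)$ lies \emph{inside} $U_0^-\cap\mathcal{C}_{\partial}$, so $U_0^-$ is split by $P_c$ into two nonempty pieces in the generic case; your case analysis never treats this configuration, yet it is the one Figure~\ref{Fig:LargeLambda} depicts and the one the proof has to handle. Finally, the ``main technical obstacle'' you flag — controlling where the return to $\mathcal{C}_{\partial}$ lands — is in the paper dispatched by appealing to Lemma~\ref{Lem:slow_flow}\,(i) and the right-curvedness of $U^0$ rather than by redoing the $K_2$ chart estimates; you are not wrong that estimates are involved, but rebuilding them misses that the relevant statement is already available as a lemma and is cited directly.
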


\begin{remark}
	Note that in Theorem~\ref{Thm:canard3}, among all trajectories starting to the right of $P_c$, besides orbits which leave $V$ in $U_0^{+}$, some solutions might leave $V$ in $V\backslash\mathcal{C}_0$. This is why we have to restrict to orbits starting in $\mathcal{C}_0\backslash U^{0}$, rather than considering initial conditions in the larger set $V\backslash U^{0}$.
	
	Indeed, some orbits starting in $V\backslash\mathcal{C}_0$ and to the right of $P_c$ correspond to orbits which spiral around $p_e$ in the classical case since they are trapped by $\mathcal{C}_{\varepsilon}^r$, and some others are located below and to the right of $\mathcal{C}_{\varepsilon}^r$ and consequently move constantly to the right in $(x,y)$-phase space.
	In \eqref{Eq:blowup_x}--\eqref{Eq:blowup_lamb}, orbits of the first kind leave $V$ in $U_0^{+}$, while orbits of the second kind leave $V$ in $V\backslash\mathcal{C}_0$, see Figure~\ref{Fig:LargeLambda}.
\end{remark}

\begin{figure}[H]
	\centering	
	\subfloat[]
	{	
}
\end{figure}
\begin{figure}
	\ContinuedFloat
	\centering
	\caption{In this numerical example, $g=x(1+x)^{a_1} - g_2\lambda + g_3y$, with $a_1=1=g_2$, $g_3=a_2=0.9$ and $\varepsilon=0.01$.
		In particular, $\lambda_H \approx \frac{-a_2}{2}\varepsilon = -4.5\cdot 10^{-3}=:\lambda_{H,\mathrm{appr}}$ and $\lambda_c \approx \frac{a_1-a_2}{4}\varepsilon = 2.5\cdot 10^{-4}=:\lambda_{c,\mathrm{appr}}$. The value of $\lambda$ increases along the pictures (a)-(f). In (a), $\lambda=1.5\cdot\lambda_{H,\mathrm{appr}}<\lambda_H$, and we have an attracting equilibrium in the classical canard case (dashed line), while the trajectory enters $\mathcal{C}_0$ in $U^+$ in the non-smooth case (solid line).
		In (b),(c), $\lambda=0.5\cdot\lambda_{H,\mathrm{appr}}>\lambda_H$, and we have a small attracting periodic orbit in the classical canard case (dashed line), while the trajectory enters $\mathcal{C}_0$ in $U^+$ in the non-smooth case (solid line). In (b), the trajectory for the non-smooth system starts above the exterior of the periodic orbit, while it starts above the interior of the periodic orbit in (c). As stated in Theorem~\ref{Thm:canard2}, the trajectory leaves $V$ to the right of the periodic orbit in (b), and the solution in (c) reenters $\mathcal{C}_0$ inside the orbit, i.e. it leaves $V$ between $P_-,P_+$.
		In (d)-(e), $\lambda=0.2\cdot\lambda_{c,\mathrm{appr}}<\lambda_{sc}$, and we have a larger attracting periodic orbit in the classical canard case (dashed line), while the trajectory again reenters $\mathcal{C}_0$ in $U^+$ in the non-smooth case, but further to the right (solid line).
		Moreover, we observe the same relative positioning of the solution to the non-smooth system and the periodic orbit as in $(b),(c)$.
		In (e), $\lambda=\lambda_{c,\mathrm{appr}}$, and we obtain trajectories close to the maximal canard solution in the classical canard case (dashed line), as well as in the non-smooth case (solid line), once those solutions cross the parabola $\mathcal{C}_{\partial}$.}\label{Fig:numerics}
\end{figure}
\begin{figure}[H]
	\centering
	\begin{overpic}[width=0.4\textwidth]{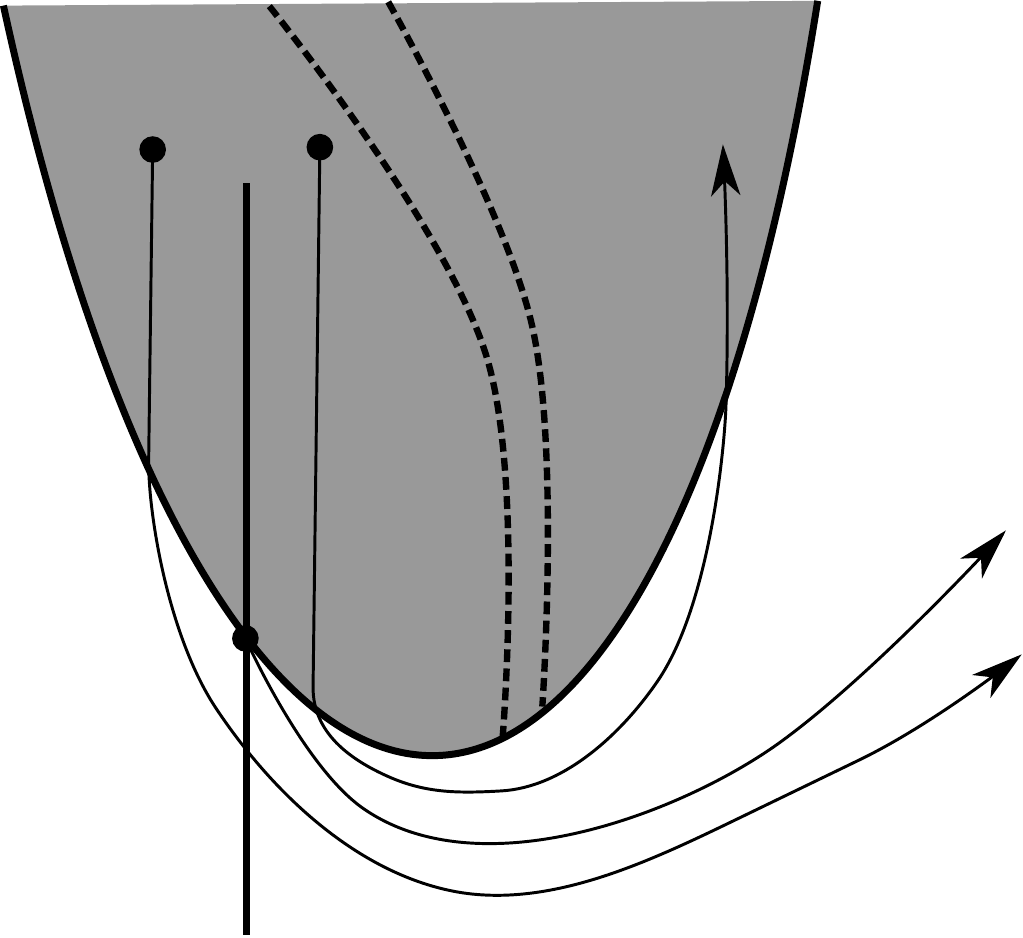}
		\put(-7,83){\scalebox{1.0}{$\mathcal{C}_{\partial}$}}
		\put(15,0){\scalebox{1.0}{$P_c$}}
		\put(32,84){\scalebox{0.8}{$U_0^{0}$}}
		\put(15,83){\scalebox{1.0}{$U_0^{-}$}}
		\put(55,83){\scalebox{1.0}{$U_0^{+}$}}
		\put(-40,-5){
			\begin{tikzpicture}
			\draw[->] (-1,0) -- (0,0) node[anchor = north]{$x$};
			\draw[->] (-1,0) -- (-1,1) node[anchor = west]{$y$};
			\end{tikzpicture}
		}
	\end{overpic}
	\caption{\label{Fig:LargeLambda}
		For $\lambda\in (\lambda_{c}(\sqrt{\varepsilon}),\lambda_0]$, all solutions starting in $P_c\cap \mathcal{C}_0$ leave $V$ in $V\backslash\mathcal{C}_0$.
		In particular, this holds for the trajectory which starts at $(p_c^x,p_c^y)=P_c\cap \mathcal{C}_{\partial}$ (starting point in the middle).
		Trajectories starting in $U_0^{-}$ (gray area left to the dashed lines) and to the left of $P_c$ (left starting point) also leave $V$ in $V\backslash\mathcal{C}_0$.
		Trajectories starting in $U_0^{-}$ and to the right of $P_c$ (right starting point) leave $V$ in $U_0^{+}$ (gray area right of the dashed lines). The set $\{g=0\}\cap \mathcal{C}_0$ is contained in $U_0^{0}$ (gray area between the dashed lines).
	}
\end{figure}
As for Theorems~\ref{Thm:class_canard1}--\ref{Thm:class_canard2}, we will show Theorems~\ref{Thm:canard1}--\ref{Thm:canard3} with help of the weighted polar blow-up transformation $\Psi$ as defined in \eqref{Blow_up_Psi} and the directional charts $K_1$ and $K_2$ according to \eqref{Blow_up_Phi1}--\eqref{Blow_up_Phi2}. In particular, the desingularized vector field in chart $K_1$ is given by
\index{zb@new canard in $K_1$| {\eqref{Eq:blowup_x_1}--\eqref{Eq:blowup_lamb_1}, }}
\begin{alignat}{2}
x_1' &= \left\{\begin{matrix}
-1 + x_1^2 - \frac{1}{2}\varepsilon_1 x_1 F(r_1,x_1,\varepsilon_1,\lambda_1),&& \text{for } 1< |x_1|,\\
-\frac{1}{2}\varepsilon_1 x_1 F(r_1,x_1,\varepsilon_1,\lambda_1),&& \text{for } 1\geq |x_1|,
\end{matrix}\right.\label{Eq:blowup_x_1}\\
r_1' &= \frac{1}{2}r_1\varepsilon_1 F(r_1,x_1,\varepsilon_1,\lambda_1),\label{Eq:blowup_r_1}\\
\varepsilon_1' &= -\varepsilon_1^2 F(r_1,x_1,\varepsilon_1,\lambda_1) ,\label{Eq:blowup_eps_1}\\
\lambda_1' &= -\frac{1}{2}\lambda_1\varepsilon_1 F(r_1,x_1,\varepsilon_1,\lambda_1),\label{Eq:blowup_lamb_1}
\end{alignat}
where $F(r_1,x_1,\varepsilon_1,\lambda_1)= x_1 - \lambda_1 + r_1(a_1x_1^2 + a_2) + \mathcal{O}(r_1(|r_1|+|\lambda_1|))$.
\index{zcb@$F(r_1,x_1,\varepsilon_1,\lambda_1)$| {\eqref{Eq:blowup_x_1}--\eqref{Eq:blowup_lamb_1}, }}
Similarly, the desingularized vector field in the rescaling chart $K_2$ is given by
\index{zd@new canard in $K_2$| {\eqref{Eq:blowup_x_2}--\eqref{Eq:blowup_y_2}, }}
\begin{alignat}{2}
x_2' &= \left\{\begin{matrix}
- y_2 + x_2^2,&& \text{for } y_2< x_2^2,\\
0,&& \text{for } y_2\geq x_2^2,
\end{matrix}\right.\label{Eq:blowup_x_2}\\
y_2' &= x_2 - \lambda_2 + r_2 G(x_2,y_2) + \mathcal{O}(r_2(|\lambda_2| + r_2)),\label{Eq:blowup_y_2}
\end{alignat}
where $G(x_2,y_2) = a_1x_2^2 + a_2y_2$.
\index{ze@$G(x_2,y_2)$| {\eqref{Eq:blowup_x_2}--\eqref{Eq:blowup_y_2}, }}

\subsection{First chart}
We analyze system \eqref{Eq:blowup_x_1}--\eqref{Eq:blowup_lamb_1} for $\lambda_1\in (-\mu,\mu)$ with $\mu>0$ small.
In particular, for fixed $\varepsilon$, we prove that trajectories starting in $V_{1,\varepsilon}$ (see Section~\ref{Sec:Intro}) and to the left of the set $U_{0,1}^{1}=\Phi_1^{-1}(U_0^{0},\varepsilon,\lambda)\cap V_1$ reach the domain $V_{2,\varepsilon}$ of transformation $\Phi_2$ in finite time. On the other hand, solutions of system \eqref{Eq:blowup_x_1}--\eqref{Eq:blowup_lamb_1} which start to the right of $U_{0,1}^{1}$ move away from the canard point and from $V_{2,\varepsilon}$.
Since system \eqref{Eq:blowup_x_1}--\eqref{Eq:blowup_lamb_1} is only piecewise smooth, we have to split $V_{1,\varepsilon}$ into the sets $\{x_1<-1\}$, $\{-1\leq x_1<0\}$, $\{0<x_1\leq 1\}$ and $\{1< x_1\}$, and study the dynamics in each of these sets separately.
Note that the hyperplanes $\{r_1=0\}$, $\{\varepsilon_1=0\}$ and $\{\lambda_1=0\}$ are invariant. Moreover, the line $l_1=\{(x_1,0,0,0):x_1\in \mathbb{R}\}$ contains the segment of equilibria $\{(x_1,0,0,0):x_1\in [-1,1]\}$, with endpoints $p_a=(-1,0,0,0)$ and $p_r=(1,0,0,0)$\index{zf@$p_a, p_r$}.
For the flow on the line $l_1$, $p_a$ is attracting from the direction $x_1<-1$ and $p_r$ is repelling from $x_1>1$.
The blown-up left branch of $\mathcal{C}_{\partial}$ in chart $K_1$ is given by $\mathcal{C}_{0,1}^a=\{x_1=-1, r_1\geq 0\}$. Similarly, the right branch is given by $\mathcal{C}_{0,1}^r=\{x_1=1, r_1\geq 0\}$ \index{zg@$\mathcal{C}_{0,1}^a$ and $\mathcal{C}_{0,1}^r$}.

\begin{figure}
	[htbp]
	\centering
	
	\caption{\\
		\label{Fig:03}Projection to $\lambda_1=\varepsilon_1=0$. Critical manifold $\mathcal{C}_{0,1}$ (gray) in chart $K_1$ with blown-up attracting branch $\mathcal{C}_{0,1}^a$ and blown-up repelling branch $\mathcal{C}_{0,1}^r$. Invariant line $l_1$ (dashed) with segment of equilibria $[p_a,p_r]$, where $p_a$ is attracting from the left and $p_r$ is repelling towards the right.
		The set $\{F=0\}\cap \mathcal{C}_{0,1}$ is located in $U_0^{1}$ (area between the dashed curves).}
\end{figure}

In particular, the endpoints of $\mathcal{C}_{0,1}^a, \mathcal{C}_{0,1}^r$ are given by $p_a$ and $p_r$ respectively. 
In the invariant subset $\{\varepsilon_1=0=\lambda_1\}$, the sets $\mathcal{C}_{0,1}^a, \mathcal{C}_{0,1}^r$ correspond to equilibria on the boundary of $\mathcal{C}_{0,1}$ of the reduced system 
\begin{alignat*}{2}
x_1' &= \left\{\begin{matrix}
-1 + x_1^2,&& \text{for } 1< |x_1|,\\
0,&& \text{for } 1\geq |x_1|,
\end{matrix}\right.\\
r_1' &= 0.
\end{alignat*}
The other equilibria of this reduced system are given by the set $\{(x_1,r_1,0,0): |x_1|<1, r_1\geq 0 \}$, i.e. by the projection of $\mathrm{int}(\mathcal{C}_{0,1})$ to $\{\varepsilon_1=0=\lambda_1\}$, see Figure~\ref{Fig:03}.
In the invariant subset $\{r_1=0=\lambda_1\}$, the reduced system reads 
\begin{alignat*}{2}
x_1' &= \left\{\begin{matrix}
-1 + x_1^2 - \frac{1}{2}\varepsilon_1x_1^2,&& \text{for } 1< |x_1|,\\
- \frac{1}{2}\varepsilon_1x_1^2,&& \text{for } 1\geq |x_1|,
\end{matrix}\right.\\
\varepsilon_1' &= - \varepsilon_1^2x_1.
\end{alignat*}
The equilibria of this system are given by $$\{(0,0,\varepsilon_1,0): \varepsilon_1\ge 0 \}\cup\{(x_1,0,0,0): |x_1|\leq 1 \},$$
and $p_a,p_r$ are equilibria on the boundary of this set.
Since $\lambda_1,r_1$ are small, $F(r_1,x_1,\varepsilon_1,\lambda_1)= x_1 - \lambda_1 + r_1(a_1x_1^2 + a_2) + \mathcal{O}(|r_1|(|r_1|+|\lambda_1|))\neq 0$ for $|x_1|$ close to one.
Therefore, $F\neq 0$ in the vicinity of $p_a,p_r$. In particular, $F<0$ close to $p_a$ and $F>0$ close to $p_r$.
Consider the smooth subsystems
\begin{alignat*}{2}
\left\{
\begin{matrix}
x_1' = -1 + x_1^2 - \frac{1}{2}\varepsilon_1x_1^2,\\
\varepsilon_1' = - \varepsilon_1^2x_1,
\end{matrix}
\right.
\qquad
\left\{
\begin{matrix}
x_1' = - \frac{1}{2}\varepsilon_1x_1^2,\\
\varepsilon_1' = - \varepsilon_1^2x_1.
\end{matrix}\right.
\end{alignat*}
For $p\in \{(0,0,\varepsilon_1,0), \varepsilon_1\ge 0 \}\cup \{(x_1,0,0,0), |x_1|<1 \}$ we only have to consider the second subsystem since those equilibrium points are contained in the interior of the critical manifold.
For all of these $p$, zero is the only eigenvalue of the (linearized) second subsystem.
For the boundary equilibria $p_a$ and $p_r$, zero is the only eigenvalue of the (linearized) second subsystem, which corresponds to perturbations in directions $x_1>-1$ and $x_1<1$ respectively.
For the (linearized) first subsystem, i.e. for directions $x_1<-1$ and $x_1>1$ in the full system, both $p_a$ and $p_r$ have a triple zero eigenvalue with eigenvectors $(0,1,0,0),(0,0,0,1)$, and $(1,0,-2,0)$ for $p_a$ respectively $(1,0,2,0)$ for $p_r$.
Moreover, in the direction $x_1<-1$, $p_a$ has the eigenvalue $-2$, while $p_r$ has the eigenvalue $2$ in the direction $x_1>1$, both with eigenvector $(1,0,0,0)$.
In the full system \eqref{Eq:blowup_x_1}--\eqref{Eq:blowup_lamb_1} also note that $x=r_1x_1$ remains constant in $\mathcal{C}_{0,1}=\{|x_1|\leq 1\}$ as a consequence of the second equation in \eqref{Eq:blowup_x}.
Similarly, $\varepsilon=r_1^2\varepsilon_1$ and $\lambda=r_1\lambda_1$ remain constant because of \eqref{Eq:blowup_eps} and \eqref{Eq:blowup_lamb} respectively.

We can find some $C_1,C_2>0$ such that 
\begin{equation}\label{Def:U_1^1}
\begin{split}
U_{1}^{1}:= \{&(x_1,r_1 + \tilde{r}_1,\lambda_1 + \tilde{\lambda}_1,\varepsilon_1):\ x_1 - \lambda_1 + r_1(a_1x_1^2 + a_2)=0,\\
& |\tilde{r}_1| \leq C_1 r_1^2,\ |\tilde{\lambda}_1\tilde{r}_1|\leq C_2|\lambda|
\}\cap V_1
\end{split}
\index{zh@$U_1^1$| {\eqref{Def:U_1^1}, }}
\end{equation}
contains the set $\{F(x_1,r_1,\lambda_1,\varepsilon_1)=0\}\cap V_1$ of roots of $F$.
For $\lambda_1 \in [-\mu,\mu]$, $r_1 \in [-\rho,\rho]$ with $\mu,\rho$ small enough, the function
$x_1\mapsto -\frac{x_1-\lambda_1}{a_1x_1^2 + a_2}$ is decreasing for $x_1\in [-C_3,C_3]$ provided that $C_3 \in (0,1)$ is small enough, while $-\frac{x_1-\lambda_1}{a_1x_1^2 + a_2}> \rho$ for $|x_1|\in [C_3,x_{0,1}]$.
Moreover, $-\frac{x_1-\lambda_1}{a_1x_1^2 + a_2} >0$ for $x_1<-\lambda_1$.
We introduce 
\begin{equation}\label{Def:U_1^+-}
\begin{split}
U_{1}^{-}&:=\{F(x_1,r_1,\lambda_1,\varepsilon_1)<0\}\backslash U_{1}^{1},\\ \quad U_{1}^{+}&:=\{F(x_1,r_1,\lambda_1,\varepsilon_1)>0\}\backslash U_{1}^{1},\quad
U_{0,1}^{1}:=U_{1}^{1}\cap\mathcal{C}_{0,1}, \text{ and}\\
\quad U_{0,1}^{-}&:= U_{1}^{-}\cap\mathcal{C}_{0,1},\quad U_{0,1}^{+}:= U_{1}^{+}\cap\mathcal{C}_{0,1}.
\end{split}
\index{zi@$U_{1}^{-}, U_{1}^{+}, U_{0,1}^{1}, U_{0,1}^{-}, U_{0,1}^{+}$| {\eqref{Def:U_1^+-} ,}}
\end{equation}

Then the following lemma holds, see Figure~\ref{Fig:FirstChart3dim}:
\begin{lemma}\label{Lem:chart1}
	For $\rho,\mu$ small enough and $\varepsilon=r_1^2 \varepsilon_1$, $\lambda = r_1\lambda_1$, we obtain the following types of trajectories for system \eqref{Eq:blowup_x_1}--\eqref{Eq:blowup_lamb_1}:
	\begin{enumerate}
		\item[(i)] For $(x_1(0),r_1(0),\lambda_1(0),\varepsilon_1(0))\in U_{1}^{-}\backslash \mathcal{C}_{0,1}$, $x_1$ eventually reaches a small ball around $-1$ with $x_1<-1$ and hence remains in $U_{1}^{1}\backslash \mathcal{C}_{0,1}$. Moreover,
		$$
		|r_1|'<0, \quad \varepsilon_1'>0, \quad \lambda_1'\mathrm{sgn}(\lambda_1(0)) >0.
		$$
		
		\item[(ii)] For $(x_1(0),r_1(0),\lambda_1(0),\varepsilon_1(0))\in U_{0,1}^{-}$ and $-1 \leq x_1(0) <0$:
		$$
		x_1\downarrow -1,\quad r_1\rightarrow -x_1(0)r_1(0), \quad \varepsilon_1\uparrow \frac{\varepsilon}{r_1(0)^2x_1(0)^2}, \quad \lambda_1\rightarrow -\frac{\lambda}{r_1(0)x_1(0)},
		$$
		as time increases.
		Moreover, these values are attained at a finite time $\tilde{t}_1>0$.
		
		\item[(iii)] For $(x_1(0),r_1(0),\lambda_1(0),\varepsilon_1(0))\in U_{0,1}^{+}$, $r_1(0)\neq 0$ and $x_1(0) \leq 1$:
		$$
		x_1\downarrow \frac{r_1(0)x_1(0)}{\rho},\quad |r_1|\uparrow \rho, \quad \varepsilon_1\downarrow \frac{\varepsilon}{\rho^2}, \quad \lambda_1\rightarrow \frac{\lambda}{\rho},\quad \text{as }t_1 \text{ increases}.
		$$
		
		\item[(iv)] For $(x_1(0),r_1(0),\lambda_1(0),\varepsilon_1(0))\in U_{1}^{+}\backslash \mathcal{C}_{0,1}$:
		$$
		x_1\uparrow x_{1,0},\quad |r_1|\uparrow \rho, \quad \varepsilon_1\downarrow \frac{\varepsilon}{\rho^2}, \quad \lambda_1\rightarrow \frac{\lambda}{\rho},\quad \text{as }t_1 \text{ increases}.
		$$
	\end{enumerate}
\end{lemma}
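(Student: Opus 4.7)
The plan is to treat each of the four cases by combining two ingredients: conservation laws that hold inside $\mathcal{C}_{0,1}$, and direct sign analysis of the right-hand sides of \eqref{Eq:blowup_x_1}--\eqref{Eq:blowup_lamb_1} using the expansion of $F$. First, since $x'=0$ on $\mathcal{C}_0$ and $\varepsilon'=\lambda'=0$ always, the quantities
\[
x = r_1 x_1, \qquad \varepsilon = r_1^2 \varepsilon_1, \qquad \lambda = r_1 \lambda_1
\]
are conserved along any trajectory segment lying in $\mathcal{C}_{0,1}$. Combined with the constraint $|x_1|\leq 1$, this reduces the flow inside $\mathcal{C}_{0,1}$ to a one-parameter family of effectively one-dimensional trajectories, and all four asymptotic values claimed in (ii) and (iii) are immediate algebraic consequences of these conservation laws once monotonicity of $x_1$ (equivalently, of $r_1$) is established.

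For cases (ii) and (iii) I would extract monotonicity directly from $x_1'=-\tfrac{1}{2}\varepsilon_1 x_1 F$ and $r_1'=\tfrac{1}{2} r_1 \varepsilon_1 F$, whose signs are determined by the signs of $x_1$, $r_1$, and $F$. As long as the trajectory remains in $\mathcal{C}_{0,1}$ away from $U_{0,1}^{1}$, the function $F$ is bounded away from zero; and since $\varepsilon_1=\varepsilon/r_1^2$ stays bounded below by $\varepsilon_1(0)>0$ (indeed, $\varepsilon_1$ can only grow when $F<0$), one obtains a uniform lower bound $|x_1'|\geq c>0$ in (ii) and $|r_1'|\geq c>0$ in (iii). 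These bounds yield the finite-time arrival at $x_1=-1$ in (ii) and at $|r_1|=\rho$ in (iii). The behaviour of $\varepsilon_1$ and $\lambda_1$ is then read off from the conservation laws.

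For cases (i) and (iv), which start outside $\mathcal{C}_{0,1}$, I would use the dominant term $-1+x_1^2$ in the branch $|x_1|>1$ of \eqref{Eq:blowup_x_1}. For $\varepsilon_1$, $r_1$, $|\lambda_1|$ sufficiently small the perturbation $-\tfrac{1}{2}\varepsilon_1 x_1 F$ is dominated, so $\mathrm{sgn}(x_1')=\mathrm{sgn}(x_1^2-1)>0$. Hence $x_1$ strictly increases, pushing the trajectory toward $-1$ from below in (i) and toward the outer boundary $x_{1,0}$ in (iv). The direction of $r_1'$, $\varepsilon_1'$, $\lambda_1'$, and in particular $\mathrm{sgn}(\lambda_1')=\mathrm{sgn}(\lambda_1(0))$ in (i), follow from \eqref{Eq:blowup_r_1}--\eqref{Eq:blowup_lamb_1} together with the preserved sign of $F$ along these short arcs.

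The main obstacle is the matching in case (i) at the switching surface $\{x_1=-1\}$: as $x_1\uparrow -1$ from below, the limit of the smooth equation $x_1'=-1+x_1^2-\tfrac{1}{2}\varepsilon_1 x_1 F$ agrees with the limit of the interior branch $x_1'=-\tfrac{1}{2}\varepsilon_1 x_1 F$, and both equal $\tfrac{1}{2}\varepsilon_1 F$ which is negative at $x_1=-1$ since $F\approx -1$ there. Thus the switching surface is attracting from both sides, and a Filippov-type argument is needed to show that the trajectory actually settles inside the thin set $U_{1}^{1}\setminus \mathcal{C}_{0,1}$ (whose width in $x_1$ is controlled by $|r_1|$) rather than crossing it repeatedly or sliding into $\mathcal{C}_{0,1}$. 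This requires a careful look at the higher-order terms in $F$ and at the simultaneous decrease of $r_1$. Once this matching at $x_1=-1$ is understood, the remaining monotonicity statements for $r_1$, $\varepsilon_1$, $\lambda_1$ in (i) are immediate from the sign analysis above.
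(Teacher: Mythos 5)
Your core strategy --- conservation of $x=r_1x_1$, $\varepsilon=r_1^2\varepsilon_1$, $\lambda=r_1\lambda_1$ while the flow is inside $\mathcal{C}_{0,1}$, combined with the elementary sign analysis of $x_1'$, $r_1'$, $\varepsilon_1'$, $\lambda_1'$ driven by the signs of $x_1$, $r_1$, $\varepsilon_1$ and $F$ --- is exactly the paper's argument, and for (ii)--(iv) what you wrote coincides with the published proof up to phrasing.

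The problem is your final paragraph, where you declare a ``main obstacle'' at $\{x_1=-1\}$ in case~(i). This is not a genuine gap, and your own computation already dissolves it. You correctly observe that both one-sided limits of $x_1'$ at $x_1=-1$ give $\tfrac12\varepsilon_1F$, so the vector field is \emph{continuous} across $\{x_1=-1\}$; there is nothing Filippov-like to analyze, no sliding set, no crossing-rule ambiguity. You then conclude that ``the switching surface is attracting from both sides,'' which is false: from the interior ($-1<x_1<0$) one has $x_1'=-\tfrac12\varepsilon_1x_1F<0$ so trajectories do reach $x_1=-1$ in finite time, but from the exterior side, writing $x_1=-1-\delta$ with $\delta>0$ small gives $x_1'\approx 2\delta-\tfrac12\varepsilon_1$, which is \emph{negative} for $\delta$ small and $\varepsilon_1>0$. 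So near the boundary the outer branch pushes trajectories \emph{away} from $\{x_1=-1\}$, not toward it. What actually controls case~(i) --- and this is what the paper invokes via the linearization at $p_a$ (eigenvalue $-2$ in the $x_1<-1$ direction) together with $F<0$ near $p_a$ --- is the attracting, normally hyperbolic slow manifold of the outer vector field, located at $x_1\approx -1-\tfrac14\varepsilon_1+\cdots<-1$. Trajectories starting in $U_1^{-}\setminus\mathcal{C}_{0,1}$ are drawn onto this manifold, hence stay strictly in $\{x_1<-1\}$ and never reach $\mathcal{C}_{0,1}$; this is the classical chart-$K_1$ picture of \cite{KruSzm3}, applicable verbatim because the vector field outside $\mathcal{C}_{0,1}$ agrees with the smooth one. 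In short: drop the Filippov discussion, replace ``attracting from both sides'' with ``the outer flow has an attracting slow manifold strictly inside $\{x_1<-1\}$,'' and what remains is the paper's proof.
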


\begin{figure}
	[htbp]
	\centering
	\caption{
		\label{Fig:FirstChart3dim}
		Dynamics in Chart $K_1$. The gray area depicts the projection of $\mathcal{C}_{0,1}$ to $\{\varepsilon_1=0\}\cup\{r_1=0\}$.
		The trajectories from left to right correspond to the cases (i)--(iv) in Lemma~\ref{Lem:chart1}.
	}
\end{figure}

\begin{proof}
	Statement~i follows directly from the discussion above. For Statement~ii, note that $x_1\rightarrow -1$ as time increases because of the second equation in \eqref{Eq:blowup_x_1} and since $F<0$, $x_1<0$ and $\varepsilon_1>0$ are bounded away from zero. Consequently, $x_1=-1$ at a finite time $\tilde{t}_1>0$. Moreover,  $x=r_1x_1$, $\varepsilon=r_1^2\varepsilon_1$ and $\lambda=r_1\lambda_1$ remain constant as discussed above. This implies $r_1\rightarrow -x_1(0)r_1(0)$, $\varepsilon_1\rightarrow \frac{\varepsilon_1(0)r_1(0)^2}{r_1(0)^2x_1(0)^2}=\frac{\varepsilon}{r_1(0)^2x_1(0)^2}$ and $\lambda_1\rightarrow -\frac{\lambda_1(0)r_1(0)}{r_1(0)x_1(0)}=-\frac{\lambda}{r_1(0)x_1(0)}$ for $t_1\nearrow \tilde{t}_1$.
	Statements~iii,iv are obtained similarly.
\end{proof}

\subsection{Second chart}
In this subsection, we study the chart $K_2$, i.e. \eqref{Eq:blowup_x_2}--\eqref{Eq:blowup_y_2}:
\begin{alignat*}{2}
x_2' &= \left\{\begin{matrix}
- y_2 + x_2^2,&& \text{for } y_2< x_2^2,\\
0,&& \text{for } y_2\geq x_2^2,
\end{matrix}\right.\\
y_2' &= x_2 - \lambda_2 + r_2 (a_1x_2^2 + a_2y_2) + \mathcal{O}(r_2(|\lambda_2| + r_2)).
\end{alignat*}
While the dynamics restricted to the critical manifold $\mathcal{C}_{0,2}=\{y_2\geq x_2^2\}$ \index{zj@$\mathcal{C}_{0,2}$} remains considerably easy, it turns out that solutions of \eqref{Eq:blowup_x_2}--\eqref{Eq:blowup_y_2} are obtained as small perturbations of constants of motion of the function $H(x_2,y_2)=\frac{1}{2}\exp(-2y_2)\left(y_2-x_2^2+\frac{1}{2}\right)$\index{zk@$H(x_2,y_2)=\frac{1}{2}e^{-2y_2}\left(y_2-x_2^2+\frac{1}{2}\right)$}.
To see this, consider the invariant subset $\{r_2=0=\lambda_2\}$ and the reduced system
\begin{alignat}{2}
x_2' &= \left\{\begin{matrix}
- y_2 + x_2^2,&& \text{for } y_2< x_2^2,\\
0,&& \text{for } y_2\geq x_2^2,
\end{matrix}\right.\label{Eq:const_of_mo1}\\
y_2' &= x_2.\label{Eq:const_of_mo2}
\end{alignat}

\begin{lemma}\label{Lem1:chart2}
	For system \eqref{Eq:const_of_mo1}--\eqref{Eq:const_of_mo2}, we obtain the following:
	\begin{enumerate}
		\item[(i)] For initial conditions of the form $x_2(0)=-c<0$ and $y_2(0)>x_2(0)^2$, $(x_2,y_2)$ reaches the point $(-c,c^2)$ after finite time with $x_2'=0$, $y_2'=x_2=-c$.
		From $(-c,c^2)$, $(x_2,y_2)$ continues to the point $(c,c^2)$ as a constant of motion of $H(x_2,y_2)=\frac{1}{4}\exp(-2c)$.
		Afterwards, $y_2\rightarrow \infty$ as $t_2\rightarrow\infty$ at speed $c>0$, while $x_2=c$ remains constant.
		
		\item[(ii)] For initial conditions of the form $x_2(0)=c>0$ and $y_2(0)>x_2(0)^2$, $y_2\rightarrow \infty$ as $t_2\rightarrow\infty$ at speed $c>0$, while $x_2=c$ remains constant.
		
		\item[(iii)] The set $\mathbb{R}^2\backslash\mathcal{C}_{0,2}=\{y_2\leq x_2^2\}$ is invariant for all solutions of \eqref{Eq:const_of_mo1}--\eqref{Eq:const_of_mo2} with $H(x_2(0),y_2(0))<0$. In particular, each such solution is a constant of motion of $H$ with $x_2\rightarrow \infty$ as $t_2\rightarrow\infty$.
		
		\item[(iv)] All equilibria of \eqref{Eq:const_of_mo1}--\eqref{Eq:const_of_mo2} are given by the half line $\{(0,y):\,\, y\in \mathbb{R}^+\}\subset \mathcal{C}_{0,2}$.
		
		\item[(v)] The solution $\gamma_{c,2}(t_2)= \left(\frac{1}{2}t_2,\frac{1}{4}t_2^2 - \frac{1}{2}\right)$, $t_2\in \mathbb{R}$, is a constant of motion for $H(x_2(0),y_2(0))=0$
		\index{zl@$\gamma_{c,2}(t_2)$| {Lemma~\ref{Lem1:chart2}, }}.
	\end{enumerate}
\end{lemma}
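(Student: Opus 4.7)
The whole analysis hinges on $H$ being a first integral of the smooth subsystem $x_2' = x_2^2 - y_2$, $y_2' = x_2$ on the region $\{y_2 < x_2^2\}$. I would start by computing $\partial_{x_2}H = -x_2\,e^{-2y_2}$ and $\partial_{y_2}H = e^{-2y_2}(x_2^2 - y_2)$, from which $\dot H = (x_2^2-y_2)(-x_2\,e^{-2y_2}) + x_2\,e^{-2y_2}(x_2^2-y_2) = 0$. I would also record the symmetry $H(-x_2,y_2) = H(x_2,y_2)$, which will drive (i).

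Claims (ii), (iv), (v) are then almost immediate from the piecewise structure. For (ii), in $\mathcal{C}_{0,2}$ one has $x_2' \equiv 0$ and $y_2' = x_2$, so a starting point with $x_2(0) = c > 0$ keeps $x_2 \equiv c$ while $y_2$ grows linearly with slope $c$. For (iv), equilibria inside $\mathcal{C}_{0,2}$ require $x_2=0$ and $y_2 \ge 0$, while in $\{y_2 < x_2^2\}$ the equations $x_2=0$ and $x_2^2 - y_2 = 0$ force $y_2=0$, contradicting $y_2 < x_2^2 = 0$. For (v) I would substitute $\gamma_{c,2}(t_2)=(t_2/2,\,t_2^2/4-1/2)$ directly into both components and into $H$.

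For (i) I would split the orbit into three stages. While still in $\mathcal{C}_{0,2}$ with $x_2 \equiv -c$ and $y_2' = -c$, the orbit descends linearly and reaches $(-c,c^2)$ at time $(y_2(0)-c^2)/c$. At that instant $y_2' = -c < 0$ pushes the orbit strictly into $\{y_2 < x_2^2\}$, where $H$ is conserved at the value $\tfrac{1}{4}e^{-2c^2}$ and $x_2' = x_2^2 - y_2 > 0$, so $x_2$ is strictly monotone. Combining $\partial_{y_2}H > 0$ on $\{y_2 < x_2^2\}$, the fact that $H \to -\infty$ as $|y_2|\to\infty$ or $|x_2|\to\infty$ along any vertical slice, and the $x_2$-symmetry, the level set $\{H=\tfrac{1}{4}e^{-2c^2}\}$ is a simple closed curve meeting the parabola exactly at $(\pm c,c^2)$, and $x_2$-monotonicity forces the orbit to traverse the lower arc from $(-c,c^2)$ to $(c,c^2)$. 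A local expansion $H - \tfrac{1}{4}e^{-2c^2} = -c\,e^{-2c^2}(x_2-c) + O(2)$ near $(c,c^2)$ shows that the level set crosses the parabola transversally, so the arrival time is finite. At $(c,c^2)$ one has $y_2' = c > 0$, which re-enters $\mathcal{C}_{0,2}$, and (ii) finishes the analysis.

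Finally, (iii) follows from the observation that $H(x_2(0),y_2(0)) < 0$ is equivalent to $y_2(0) < x_2(0)^2 - 1/2$. Since $H$ is conserved along the smooth flow, the orbit stays in $\{y_2 < x_2^2 - 1/2\}\subset \mathbb{R}^2\setminus\mathcal{C}_{0,2}$ for all time, hence $\mathcal{C}_{0,2}$ is never reached, and the uniform lower bound $x_2' = x_2^2 - y_2 > 1/2$ provides both global existence and $x_2 \to +\infty$. The only genuinely delicate point I anticipate is the closed-loop argument for (i), namely ruling out escape to infinity along the level set before the return to the parabola; the sign of $\partial_{y_2}H$ on $\{y_2<x_2^2\}$ (giving a graph representation of each arc) together with the behaviour of $H$ at infinity is what settles it.
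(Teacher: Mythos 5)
Your argument follows essentially the same route as the paper's proof: both identify $H$ as a first integral of the smooth subsystem on $\{y_2<x_2^2\}$, read the traversal from $(-c,c^2)$ to $(c,c^2)$ off the geometry of the level set $\{H=\tfrac14 e^{-2c^2}\}$ (the paper simply asserts these level sets are periodic orbits of the extended smooth flow, whereas you argue via the graph representation coming from $\partial_{y_2}H>0$ on $\{y_2<x_2^2\}$), and dispose of (ii)--(v) by direct computation. One small slip to fix: $H\to 0^+$, not $-\infty$, as $y_2\to+\infty$ at fixed $x_2$; compactness of the positive level set is instead secured by $H\to-\infty$ as $|x_2|\to\infty$ together with the bound $y_2>x_2^2-\tfrac12$ forced by $H>0$.
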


\begin{proof}
	If $(x_2(0),y_2(0))\in \{y_2<x_2^2\}$, then $(x_2,y_2)$ solves the system
	\begin{alignat*}{2}
	x_2' &=
	- y_2 + x_2^2,\\
	y_2' &= x_2.
	\end{alignat*}
	In particular, $(x_2,y_2)$ is a constant of motion for the function $H(x_2,y_2)=\frac{1}{2}\exp(-2y_2)\left(y_2-x_2^2+\frac{1}{2}\right)=h$, until eventually $y_2=x_2^2$.
	Note that trajectories with $h\leq 0$ satisfy $y_2\leq x_2^2 - \frac{1}{2} <x_2^2$ at all times.
	Morover, solutions with $h<0$ correspond to unbounded solutions, which proves (iii). To see (v), note that the solution for $h=0$ is given by 
	$$
	\gamma_{c,2}(t_2)=(x_{c,2}(t_2),y_{c,2}(t_2)) = \left(\frac{1}{2}t_2,\frac{1}{4}t_2^2 - \frac{1}{2}\right),\qquad t_2\in \mathbb{R}.
	$$
	Moreover, $\frac{1}{4}$ is the global maximum of $H$ and $H(x_2,y_2)=h\in \left(0,\frac{1}{4}\right]$ is satisfied for $y_2=x_2^2=c\geq 0$ if and only of
	$$
	h= \frac{1}{2}\exp(-2c)\left(\frac{1}{2}\right) \Leftrightarrow c=c(h):= -\frac{1}{2}\log(4h) = -\log(2\sqrt{h}).
	$$
	Here, $h=\frac{1}{4}$ corresponds to the equilibrium $(0,0)$.
	Each trajectory with $h\in \left(0,\frac{1}{4}\right)$ and starting with $y_2< x_2^2$ approaches the point $(\sqrt{c(h)},c(h))$. 
	In fact, for $y_2< x_2^2$ and $x_2<0$, there hold $y_2'<0$ and $x_2'>0$, but since all constants of motion for $H$ with $h\in \left(0,\frac{1}{4}\right)$ correspond to periodic orbits, $x_2'=0=- y_2 + x_2^2$ must hold at a certain time. This condition is only satisfied for the points $(\pm \sqrt{c(h)}, c(h))$, but $(x_2^2,y_2)$ is moving away from the point $(-\sqrt{c(h)},c(h))$ if the initial condition satisfies $y_2< x_2^2$. This proves (ii), and (i) follows because $x_2'=0$, $y_2'<0$ hold for $y_2>x_2^2$, $x_2<0$. Statement~iv follows directly from \eqref{Eq:const_of_mo1}--\eqref{Eq:const_of_mo2}.
\end{proof}

With $\varepsilon=r_2^2, \lambda=r_2\lambda_2$, we can find $C_5>0$ such that the set
\begin{equation}\label{Def:U_2}
\begin{split}
U_{2}^{2}:= \{&(x_2,y_2+\tilde{y}_2,r_2,\lambda_2):\
x_2 - \lambda_2 + r_2(a_1x_2^2 + a_2y_2) = 0,\\
& \tilde{x}_2\in (-C_5(\varepsilon + 
|\lambda|), 	C_5(\varepsilon + 
|\lambda|)) \}\cap V_2
\end{split}
\index{zm@$U_{2}^{2}$| {\eqref{Def:U_2}, }}
\end{equation}
contains the set $\{g_2(x_2,y_2,r_2,\lambda_2)=0\}\cap V_2$ of roots of $g_2$, where 
$$g_2(x_2,y_2,r_2,\lambda_2)=x_2 - \lambda_2 + r_2(a_1x_2^2 + a_2y_2) + \mathcal{O}(r_2(|\lambda_2| + r_2))
$$ is the function on the right side of \eqref{Eq:blowup_y_2}.
The function $x_2\rightarrow - \frac{x_2- \lambda_2 + r_2a_1x_2^2}{a_2}$ is right-curved and decreasing for $x_2>-\frac{1}{2a_1r_2}$. For $r_2\in [-\rho,\rho]$ and $\rho$ small enough, $x_2>-\frac{1}{2a_1r_2}$ holds for all $(x_2,y_2)\in D$.
Hence, for fixed $r_2,\lambda_2$, the projection $P_{(x_2,y_2)}(U_{2}^{2})$ into $(x_2,y_2)$-space has right-curved and decreasing boundaries.
This implies that the projection $P_{(x_2,y_2)}(U_{2}^{2})$ of $U_{2}^{2}$ into $(x_2,y_2)$-space splits $\mathbb{R}^2$ into the lower set $P_{(x_2,y_2)}(U_{2}^{-})$, the separating set $P_{(x_2,y_2)}(U_{2}^{2})$ and the upper set $P_{(x_2,y_2)}(U_{2}^{+})$, where we define

\begin{equation}\label{Def:U_2^+-}
\begin{split}
U_{2}^{-}&:=\{g_2(x_2,y_2,r_2,\lambda_2)<0\}\backslash U_{2}^{2},\\ U_{2}^{+}&:=\{g_2(x_2,y_2,r_2,\lambda_2)>0\}\backslash U_{2}^{2}, \text{ and}\\
U_{0,2}^{-}&:=U_{2}^{-}\cap \mathcal{C}_0,\quad U_{0,2}^{+}:=U_{2}^{+}\cap \mathcal{C}_0.
\end{split}
\index{zn@$U_{2}^{-}, U_{2}^{+}, U_{0,2}^{-}, U_{0,2}^{+}$| {\eqref{Def:U_2^+-}, }}
\end{equation}

\subsection{Separation into Domains}\label{Sec:Domains}
With the notation of \eqref{Def:U_1^1},\eqref{Def:U_1^+-}, \eqref{Def:U_2} and \eqref{Def:U_2^+-} we define 
\begin{equation}\label{Def: U^1}
U^{1}:=P_{(x,y)}\Phi_1(U_{1}^{1}).\index{zo@$U^{1}$| {\eqref{Def: U^1} ,}}
\end{equation}
For $\lambda<\lambda_{H}$ we define 
\begin{equation}\label{Def:tildeU_2^2}
\begin{split}
\tilde{U}_{2}^{2}:=\{&(x_2,y_2+\tilde{y}_2,r_2,\lambda_2):\
x_2 - \lambda_2 + r_2(a_1x_2^2 + a_2y_2) = 0,\\
& \tilde{x}_2\in (-C_6(\sqrt{\varepsilon} + |\lambda|), 	C_5(\varepsilon + 
|\lambda|)) \}\cap V_2
\end{split}\index{zp@$\tilde{U}_{2}^{2}$| {\eqref{Def:tildeU_2^2}, }}
\end{equation}
for some appropriate $C_6>0$. For $\tilde{\Gamma}_{\lambda,\varepsilon}, p_-, p_+$ as introduced before Theorem~\ref{Thm:canard2}, we can find some $\lambda_*>\lambda_{H}$ such that $p_+\in P_{(x,y)}\Phi_2(U_{2}^{2})$ for $\lambda\in (\lambda_{H},\lambda_*)$ and $p_+\notin P_{(x,y)}\Phi_2(U_{2}^{2})$ for $\lambda\in [\lambda_*,\lambda_{sc}]$. We define
\index{zq@$U^{2}$| {\eqref{U^2} ,}}
\begin{equation}\label{U^2}
\begin{split}
U^{2}&:= P_{(x,y)}\Phi_2(\tilde{U}_{2}^{2}),
\quad \text{for }\lambda\in [-\lambda_0,\lambda_*),\\
U^{2}&:=
P_{(x,y)}\Phi_2(U_{2}^{2}),
\quad \text{for }\lambda\in [\lambda_*,\lambda_0].
\end{split}
\end{equation}
Finally we set
\begin{equation}\label{U^0}
U^{0}:=U^{1}\cup U^{2}.\index{zr@$U^0$| {\eqref{U^0}, }}
\end{equation}
Moreover, we set
$$U^{-}:= P_{(x,y)}(\Phi_1(U_{1}^{-}))\cup [(P_{(x,y)}\Phi_2(U_{2}^{-}))\backslash U^{2}],$$
$$U^{+}:= P_{(x,y)}(\Phi_1(U_{1}^{+}))\cup [(P_{(x,y)}\Phi_2(U_{2}^{+}))\backslash U^{2}],
$$
and 
$$U_0^{-}:= U^{-}\cap \mathcal{C}_0,\quad U_0^{+}:= U^{+}\cap \mathcal{C}_0.$$

\subsection{Proofs of the main results}

Before we prove the main results, we study the relative positioning of $\mathcal{C}_{\varepsilon}^a$, $\mathcal{C}_{\varepsilon}^r$ and $\mathcal{C}_0$.
\begin{lemma}\label{Lem:slow_flow}
	Consider the situation as in Theorems~\ref{Thm:canard1}--\ref{Thm:canard3}
	and fix $\varepsilon\in (0,\varepsilon_0]$.
	Then the following statements hold for $\lambda\in (-\lambda_0,\lambda_{sc}(\sqrt{\varepsilon}))$:
	\begin{enumerate}
		\item[(i)] Each trajectory starting in $U^{-}\backslash \mathcal{C}_0$ can only enter $\mathcal{C}_0$ in $U_0^{+}$.
		
		\item[(ii)] The slow flow corresponding to $\mathcal{C}_{\varepsilon}^a$ enters $V$ at $(p_{\varepsilon}^{a,x},p_{\varepsilon}^{a,y})\in \mathcal{C}_{\varepsilon}^a\cap\partial V$ with $p_{\varepsilon}^{a,x} < p_{e}^x$ and there holds $(p_{\varepsilon}^{a,x},p_{\varepsilon}^{a,y})\in \mathbb{R}^2\backslash\mathcal{C}_0$.
		The set $\mathcal{C}_{\varepsilon}^r\cap U^{-}$, where the corresponding slow flow leaves $V$, is
		located below $\mathcal{C}_{\varepsilon}^a$.
		
		\item[(iii)] The slow flow corresponding to $\mathcal{C}_{\varepsilon}^a$ enters $\mathcal{C}_0$ at $\mathcal{C}_{\varepsilon}^a\cap\mathcal{C}_{\partial}\in U_0^{+}$.
		For $\lambda\in (\lambda_{H}(\sqrt{\varepsilon}),\lambda_{sc}(\sqrt{\varepsilon}))$, this point is located to the right of $P_+$.
		
		\item[(iv)]
		$\mathcal{C}_{\varepsilon}^r\cap\mathcal{C}_0$ is either empty or contained in $U_0^{+}$ and to the right of $\mathcal{C}_{\varepsilon}^a$.
	\end{enumerate}
\end{lemma}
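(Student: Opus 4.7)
The plan is to reduce all four statements to two ingredients: (a) a single transversality computation along $\mathcal{C}_{\partial}$ that determines when trajectories lying below the parabola can cross into the critical manifold, and (b) the location of the Fenichel-perturbed manifolds $\mathcal{C}_{\varepsilon}^a$ and $\mathcal{C}_{\varepsilon}^r$ inherited from the classical analysis of Theorems~\ref{Thm:class_canard1}--\ref{Thm:class_canard2}, using the crucial fact that below $\mathcal{C}_0$ the system \eqref{Eq:blowup_x}--\eqref{Eq:blowup_lamb} agrees pointwise with the classical canard system \eqref{Eq:class_blowup_x}--\eqref{Eq:class_blowup_lamb}.

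The central computation is the following. Along any orbit in $\mathbb{R}^2\backslash \mathcal{C}_0=\{y<x^2\}$,
\begin{equation*}
\frac{\txtd}{\txtd t}(y-x^2) \;=\; y' - 2x\,x' \;=\; \varepsilon g(x,y,\lambda) + 2x(y-x^2),
\end{equation*}
which at a point $(x_0,x_0^2)\in\mathcal{C}_{\partial}$ collapses to $\varepsilon g(x_0,x_0^2,\lambda)$. Hence an orbit arriving from $\{y<x^2\}$ can cross $\mathcal{C}_{\partial}$ into $\mathcal{C}_0$ only at points where $g\geq 0$. Since $U_0^{-}\subset\{g<0\}$ by construction, statement (i) is immediate.

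For (ii), I would first apply Fenichel theory on the normally hyperbolic parts of $\mathcal{C}_{\partial}$ outside a fixed neighbourhood of the fold, producing $\mathcal{C}_{\varepsilon}^a,\mathcal{C}_{\varepsilon}^r$ as $\mathcal{O}(\varepsilon)$-perturbations of $\mathcal{C}_{0}^a,\mathcal{C}_{0}^r$. The standard asymptotic expansion $y=x^2+\varepsilon\phi_1(x)+\mathcal{O}(\varepsilon^2)$ with $\phi_1(x)=-g(x,x^2,\lambda)/(2x)$ shows that such a slow manifold lies strictly below $\mathcal{C}_{\partial}$ precisely where $g$ and $x$ have the same sign. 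At the left entry $x\approx -\rho$ this holds for all $\lambda\in(-\lambda_0,\lambda_{sc}(\sqrt{\varepsilon}))$, because $p_e$ is only $\mathcal{O}(\lambda)$-close to the origin and $g(-\rho,\rho^2,\lambda)\approx -\rho<0$. This simultaneously yields $(p_{\varepsilon}^{a,x},p_{\varepsilon}^{a,y})\in\mathbb{R}^2\backslash\mathcal{C}_0$ and $p_{\varepsilon}^{a,x}<p_e^x$. The claim that $\mathcal{C}_{\varepsilon}^r\cap U^{-}$ lies below $\mathcal{C}_{\varepsilon}^a$ is then transported verbatim from the classical picture in the rescaling chart $K_2$ (Theorem~\ref{Thm:class_canard1} and \cite{KruSzm2}), which places $\mathcal{C}_{\varepsilon}^r$ below $\mathcal{C}_{\varepsilon}^a$ for all $\lambda<\lambda_c(\sqrt{\varepsilon})$, a fortiori for $\lambda<\lambda_{sc}(\sqrt{\varepsilon})$.

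Statements (iii) and (iv) then follow by applying (i) to the particular trajectories $\mathcal{C}_{\varepsilon}^a$ and $\mathcal{C}_{\varepsilon}^r$ themselves: each is by definition an orbit lying below $\mathcal{C}_0$, so any intersection with $\mathcal{C}_{\partial}$ must lie in $U_0^{+}$. For (iii) in the regime $\lambda>\lambda_H(\sqrt{\varepsilon})$, $\mathcal{C}_{\varepsilon}^a$ is asymptotic to the Hopf-born limit cycle $\Gamma_{(\lambda,\varepsilon)}$ of the classical system, so its crossing into $\mathcal{C}_{\partial}$ can be placed just beyond the rightmost intersection $p_+$ of $\Gamma_{(\lambda,\varepsilon)}$ with $\mathcal{C}_{\partial}$, i.e.\ to the right of $P_+$. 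For (iv) the right-positioning of $\mathcal{C}_{\varepsilon}^r\cap\mathcal{C}_0$ relative to $\mathcal{C}_{\varepsilon}^a\cap\mathcal{C}_0$ is the translation into our setting of the classical non-connection result for $\lambda\neq\lambda_c(\sqrt{\varepsilon})$, combined with the sign of $g$ on the two branches.

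I expect the main obstacle to be the precise placement of $\mathcal{C}_{\varepsilon}^a\cap\mathcal{C}_{\partial}$ relative to $P_+$ in the intermediate range $\lambda\in(\lambda_H(\sqrt{\varepsilon}),\lambda_{sc}(\sqrt{\varepsilon}))$, which requires importing the detailed blow-up analysis of the canard cycle from \cite{KruSzm2} rather than a local calculation; in contrast, the transversality identity above does all the conceptual work for (i) and for the local direction of every crossing used in (iii) and (iv).
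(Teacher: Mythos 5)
Your transversality identity
\[
\frac{\txtd}{\txtd t}(y-x^2)\Big|_{y=x_0^2} \;=\; \varepsilon\,g(x_0,x_0^2,\lambda)
\]
is correct and is a clean way to see that a crossing from $\{y<x^2\}$ into $\mathcal{C}_0$ can only occur where $g\geq 0$. However, this only rules out entries in $U_0^{-}$; it does \emph{not} establish statement (i). The set $U_0^{+}$ is by construction $\{g>0\}\backslash U^{0}$ intersected with $\mathcal{C}_0$, where $U^{0}$ is the tube of width $\mathcal{O}(\varepsilon+\sqrt{\varepsilon}\lambda)$ (resp.\ $\mathcal{O}(\varepsilon^{3/2}+\sqrt{\varepsilon}\lambda)$) around the curve of equilibria $\Gamma_e$. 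The right half of $U^{0}\cap\mathcal{C}_{\partial}$ is an interval on which $g$ is \emph{positive} but of size only $\mathcal{O}(\varepsilon)$. Your argument leaves open the possibility that a trajectory from $U^{-}\backslash\mathcal{C}_0$ lands there, i.e.\ inside $U^{0}$, which is exactly what (i) has to exclude and why (i) is non-trivial.

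Closing this gap is the bulk of the paper's proof of (i). For $\lambda\in(\lambda_*,\lambda_{sc}(\sqrt{\varepsilon}))$ one uses that $p_+\notin U^{2}$ and that the orbit is attracted to $\tilde\Gamma_{(\lambda,\varepsilon)}$; for $\lambda\in[-\lambda_0,\lambda_*]$ one linearizes \eqref{Eq:blowup_x_2}--\eqref{Eq:blowup_y_2} at $p_e$ in chart $K_2$ and computes the half-turn of the resulting spiral explicitly, showing that a trajectory entering $\mathbb{R}^2\backslash\mathcal{C}_0$ at $(\lambda_2-c,(\lambda_2-c)^2)$ (just left of the tube) re-enters at $(\lambda_2+c,(\lambda_2+c)^2)+\mathcal{O}(r_2^2+|r_2\lambda_2|+\lambda_2^2)$, which for $c$ of the required magnitude lands strictly to the right of $U_2^{2}$. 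The condition $c\neq\mathcal{O}(2)$ is needed to keep the error terms subordinate, which is exactly what dictates the width of $U^{0}$ in Theorem~\ref{Thm:canard2}(iv). Without some quantitative argument of this type (monodromy/return map, or an equivalent estimate on the half-turn near $p_e$), (i) is not proved, and since you derive (iii) and (iv) by applying (i) to $\mathcal{C}_{\varepsilon}^a$ and $\mathcal{C}_{\varepsilon}^r$, those inherit the gap as well. Your treatment of (ii) via the Fenichel expansion $y=x^2+\varepsilon\phi_1(x)+\mathcal{O}(\varepsilon^2)$ with $\phi_1=-g/(2x)$ is consistent with the paper's Lemma~\ref{Lem:Slow_man} and is fine.
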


\begin{proof}
	(i)
	Let $(x(0),y(0))$ be located in $U^{-}\backslash \mathcal{C}_0$.
	Then by transformation to the chart $K_1$ or respectively $K_2$, $y$ is initially decreasing.
	Indeed, if $y(0)>0$ we may apply the transformation to chart $K_1$. By Lemma~\ref{Lem:chart1}, $x_1(0)=\frac{x(0)}{\sqrt{y(0)}}<-1$ and $r_1=\sqrt{y(0)}>0$ imply that $r_1$ is initially decreasing until the solution enters $V_{2,\rho^2}$, where we can change into chart $K_2$. Note that we still have $x_{1}<-1$, $r_{1}>0$ at this time.
	In chart $K_2$, we obtain $y_2<x_2^2$ and $y_2<-\frac{1}{r_2a_2}(x_2-\lambda_2 + r_2a_1x_2^2) + \mathcal{O}(|\lambda_2|+r_2)$.
	This implies that $x_2$ is increasing and $y_2$ decreasing. 
	Moreover, $U_{2}^{2}$ has right-curved boundaries.
	If $\lambda\in (\lambda_*,\lambda_{sc}(\sqrt{\varepsilon}))$ then	$p_+\notin U^{2}$, and the trajectory can only reach the parabola in $U^{+}$ since it is attracted by $\tilde{\Gamma}_{(\lambda,\varepsilon)}$.
	
	It remains to prove the statement for $\lambda\in [-\lambda_0,\lambda_*]$.
	We write $\mathcal{O}(2):=\mathcal{O}(r_2^2 + |r_2\lambda_2| + \lambda_2^2)$ and consider $\lambda\in [-\lambda_0,\lambda_*]$. 
	In order to prove the statement, it is enough to consider trajectories starting to the left of $U_2^2$ but close to $p_e$, and to prove that these trajectories reenter $\mathcal{C}_{0,2}$ to the right of $U_2^2$. Hence, we consider trajectories with initial conditions of the form $x_2(0) = \lambda_2 - c$, $y_2(0) = (\lambda_2 - c)^2$, where $c=\mathcal{O}(r_2 + |\lambda_2|)$ and $c>C_5(r_2^2 + r_2|\lambda_2|)$.
	We linearize \eqref{Eq:blowup_x_2}--\eqref{Eq:blowup_y_2} at $p_e=(\lambda_2,\lambda_2^2) + \mathcal{O}(2)$ to obtain
	\begin{align*}
	\left(
	\begin{matrix}
	x_2\\
	y_2
	\end{matrix}
	\right)' = 
	\left(
	\begin{matrix}
	2\lambda_2 	&& 	-1\\
	1 			&&	r_2a_2
	\end{matrix}
	\right)
	\left(
	\begin{matrix}
	x_2 - \lambda_2\\
	y_2 - \lambda_2^2
	\end{matrix}
	\right)
	+ \mathcal{O}(2, (x_2-\lambda_2)^2)
	=: A
	\left(
	\begin{matrix}
	x_2-\lambda_2\\
	y_2-\lambda_2^2
	\end{matrix}
	\right) + \mathcal{O}(2,(x_2-\lambda_2)^2).
	\end{align*}
	Note that the higher order term $\mathcal{O}((x_2-\lambda_2)^2)$ stems from the quadratic terms in $x_2$ in \eqref{Eq:blowup_x_2}--\eqref{Eq:blowup_y_2}.
	In order to estimate trajectories of the full system, we study the linearized system 
	\begin{align*}
	\left(
	\begin{matrix}
	x\\
	y
	\end{matrix}
	\right)' = 
	A
	\left(
	\begin{matrix}
	x\\
	y
	\end{matrix}
	\right).
	\end{align*}
	We denote by $(x,y)$ the solution of this system. Keep in mind that the term $(x_2(t)-\lambda_2)^2= x(t)^2$ has to be of order $\mathcal{O}(2)$ until the trajectory reenters $\mathcal{C}_0$, in order to ensure that $(x_2 , y_2)$ is approximated up to order $\mathcal{O}(2)$ by $(x + \lambda_2, y+\lambda_2^2)=(x,y) + p_e + \mathcal{O}(2)$.
	The eigenvalues of $A$ are given by $\mu=\lambda_2 + \frac{r_2a_2}{2} + \txti \frac{1}{2}\sqrt{|(2\lambda_2 - r_2a_2)^2 - 4|}$ and $\bar{\mu}$.
	We abbreviate 
	$$k:=\frac{1}{2}\sqrt{|(2\lambda_2 - r_2a_2)^2 - 4|}.
	$$
	The corresponding eigenvectors are given by 
	\benn
	v=\left(\begin{matrix}
		\lambda_2 - \frac{r_2a_2}{2} + \txti k\\
		1
	\end{matrix}\right) 
	\eenn
	and $\bar{v}$ respectively. Hence, we obtain real valued solutions of the linearized system as linear combinations of
	$$
	\txte^{(\lambda_2+\frac{r_2a_2}{2})t} 
	\left(
	\begin{matrix}
	(\lambda_2-\frac{r_2a_2}{2}) \cos(kt) - k \sin(kt)\\
	\cos(kt)
	\end{matrix}
	\right),
	$$
	$$
	\txte^{(\lambda_2+\frac{r_2a_2}{2})t} 
	\left(
	\begin{matrix}
	(\lambda_2-\frac{r_2a_2}{2}) \sin(kt) + k \cos(kt)\\
	\sin(kt)
	\end{matrix}
	\right).
	$$
	We want to estimate trajectories in \eqref{Eq:blowup_x_2}--\eqref{Eq:blowup_y_2} with initial conditions of the form $x_2(0) = \lambda_2 - c$, $y_2(0) = (\lambda_2 - c)^2$ for some $c>0$. Hence, in the linearized system, we obtain initial conditions (up to order $\mathcal{O}(2)$) of the form $x(0)=-c$, $y(0)=-2\lambda_2c + c^2=:d$, and hence the solution
	
	\begin{align*}
	\left(
	\begin{matrix}
	x(t)\\
	y(t)
	\end{matrix}
	\right)
	=
	\txte^{(\lambda_2+\frac{r_2a_2}{2})t} 
	\left[
	\left(
	\begin{matrix}
	-c\\
	d
	\end{matrix}
	\right)
	\cos(kt)
	-	\left(
	\begin{matrix}
	k^2 d + (\lambda_2 - \frac{r_2a_2}{2})^2d + (\lambda_2 - \frac{r_2a_2}{2})c\\
	(\lambda_2 - \frac{r_2a_2}{2})d + c
	\end{matrix}
	\right)
	\frac{\sin(kt)}{k}
	\right].
	\end{align*}
	Note that the requirement $x(t)^2=\mathcal{O}(2)$ is indeed fulfilled for $c=\mathcal{O}(r_2 + |\lambda_2|)$ and $t$ bounded, since then $c^2 + |cd| + d^2=\mathcal{O}(2)$.
	Also observe that $\frac{\sqrt{3}}{2}< k<2$ for $\lambda_2\in [-\mu,\lambda_{*,2}]$, where $\mu$ is small and with 
	$$
	\lambda_{H,2}=-\frac{r_2a_2}{2} + \mathcal{O}(r_2^2)
	< \lambda_{*,2} < \mu.$$
	For $c=\mathcal{O}(r_2 + |\lambda_2|)$ we obtain $d=\mathcal{O}(2)$ and therefore
	\begin{align*}
	\left(
	\begin{matrix}
	x(t)\\
	y(t)
	\end{matrix}
	\right)
	=
	\txte^{(\lambda_2+\frac{r_2a_2}{2})t} 
	\left[
	\left(
	\begin{matrix}
	-c\\
	0
	\end{matrix}
	\right)
	\cos(kt)
	-	\left(
	\begin{matrix}
	0 \\
	c
	\end{matrix}
	\right)
	\frac{\sin(kt)}{k}
	\right] + \mathcal{O}(2).
	\end{align*}
	At this stage, in order to justify that $(x_2,y_2)$ is indeed approximated up to order $\mathcal{O}(2)$ by $(\lambda_2 + x, \lambda_2^2 + y)$, it is crucial that $x'$ and $y'$ are not of order $\mathcal{O}(2)$ at the same time. This is satisfied if we require $c\neq \mathcal{O}(2)$. In this case, we obtain that - up to order $\mathcal{O}(2)$ - $(x_2,y_2)$ reaches the parabola $\mathcal{C}_{\partial,2}$ at the first time $t>0$ for which
	$$
	\mathcal{O}(2) = -2\lambda_2c\txte^{(\lambda_2+\frac{r_2a_2}{2})t}\cos(kt) + c^2 \txte^{2(\lambda_2+\frac{r_2a_2}{2})t}\cos^2(kt) = -\frac{c}{k}\txte^{(\lambda_2+\frac{r_2a_2}{2})t}\sin(kt).
	$$
	Since $\txte^{(\lambda_2+\frac{r_2a_2}{2})t}=1+ (\lambda_2+\frac{r_2a_2}{2})t + \mathcal{O}(2)$, we obtain 
	$$
	-\frac{c}{k}\txte^{(\lambda_2+\frac{r_2a_2}{2})t}\sin(kt)= -\frac{c}{k}\sin(kt) + \mathcal{O}(2).
	$$
	Hence, this time is approximately given by $t=\frac{\pi}{k}$.
	But since 
	$$(x(\pi/k),y(\pi/k))= (c,0) + \mathcal{O}(2) = (c,c^2) + \mathcal{O}(2),$$ we obtain that 
	$(x_2,y_2)$ reaches the parabola at some point 
	$$(\lambda_2 + c, (\lambda_2+c)^2) + \mathcal{O}(2).$$
	If $C_7(r_2 + |\lambda_2|) \geq c \geq C_6(r_2 + r_2|\lambda_2|)>C_5(r_2^2 + r_2|\lambda_2|)$ (as in \eqref{Def:tildeU_2^2}) with appropriate $C_6,C_7>0$, then $c=\mathcal{O}(r_2 + |\lambda_2|)$ as required and at the same time $c\neq \mathcal{O}(2)$ as required. Moreover, the point $(c,c^2) + \mathcal{O}(2)$ is located in $U_{2}^+$, which proves the statement for $\lambda\in [-\lambda_0,\lambda_*]$.
	
	(ii) We will show in Lemma~\ref{Lem:Slow_man} that $\mathcal{C}_{\varepsilon}^a\cap (V\backslash V_{2,\varepsilon})$ 
	is located below $\mathcal{C}_0$, provided that $\rho,\lambda_0$ are small enough. Moreover, the slow flow corresponding to $\mathcal{C}_{\varepsilon}^a$ enters $V$ at $(p_{\varepsilon}^{a,x},p_{\varepsilon}^{a,y})\in \mathcal{C}_{\varepsilon}^a\cap\partial V$ with $p_{\varepsilon}^{a,x} < p_{e}^x$. That $\mathcal{C}_{\varepsilon}^r\cap U^{-}$ is located below $\mathcal{C}_{\varepsilon}^a$ provided that $\lambda\in (-\lambda_0,\lambda_{sc}(\sqrt{\varepsilon}))$ is shown in \cite{KruSzm2}.
	
	(iii) By Theorem~\ref{Thm:class_canard1}, in the classical case, the slow flow corresponding to $\mathcal{C}_{\varepsilon}^a$ is attracted to $p_e$ for $\lambda\in (-\lambda_0,\lambda_{H}(\sqrt{\varepsilon})]$. For $\lambda\in (\lambda_{H}(\sqrt{\varepsilon}),\lambda_{sc}(\sqrt{\varepsilon}))$ it is attracted to $\Gamma_{\lambda,\varepsilon}$.
	Moreover, by (ii), $\mathcal{C}_{\varepsilon}^a$ can only be located outside of the periodic orbit.
	In both cases, this implies that the slow flow for $\mathcal{C}_{\varepsilon}^a$ enters $\mathcal{C}_0$ for the first time in $U_0^{+}$. The corresponding solution of \eqref{Eq:blowup_x}--\eqref{Eq:blowup_lamb} remains in $U_0^{+}$ until it leaves $V$.
	
	(iv) Follows from (ii) and (iii). 
\end{proof}
We are now able to prove the main results:
\begin{proof}[Proof of Theorem~\ref{Thm:canard1}]
	That $\Gamma_e$ exists follows from the implicit function theorem applied to \eqref{Eq:blowup_x_1}--\eqref{Eq:blowup_lamb_1} and \eqref{Eq:blowup_x_2}--\eqref{Eq:blowup_y_2}. The existence of the values $\lambda_{H}(\sqrt{\varepsilon})<\lambda_{sc}(\sqrt{\varepsilon})$ follows from Theorem~\ref{Thm:class_canard1}.
	By Lemma~\ref{Lem:chart1}, all trajectories starting in 
	$U_{1}^{-}$, see \eqref{Def:U_1^+-}, eventually reach the domain of definition of chart $K_2$, either above or below the critical manifold, i.e. with $x_1<-1$ close to $-1$ or still with $-1<x_1<0$, but to the left of $U^{1}$. Here, it is important to note that $U^{1}=P_{(x,y)}\Phi_1(U_1^1)$ is right-curved so that $(x_1,r_1,\varepsilon_1,\lambda_1)$ never enters $U_1^{1}$, see Lemma~\ref{Lem:chart1}.
	Note also that $U_{1}^{-}$ corresponds to the regions in (i) and (ii) of Lemma~\ref{Lem:chart1}.
	By Lemma~\ref{Lem:slow_flow}, the solution leaves the critical manifold at a point which is located above $\mathcal{C}_{\varepsilon}^a$, but reenters $\mathcal{C}_0$ in $U_0^{+}$. 
	Again by Lemma~\ref{Lem:slow_flow}, this behaviour applies for all $\lambda\in (-\lambda_0,\lambda_{sc}(\sqrt{\varepsilon}))$.
	
	A proof analog to that in \cite{KruSzm2} leads to the same results as in \cite[Theorem~3.1]{KruSzm2} about the dependence on $\lambda$ of the existence of a canard solution for the non-smooth problem \eqref{Eq:blowup_x}--\eqref{Eq:blowup_lamb}.
	In particular, we obtain the expansion 
	$$
	\lambda_c(\sqrt{\varepsilon}) = -\left(\frac{a_2}{2} + \frac{-2a_1 -2a_2}{8}\right)\varepsilon + \mathcal{O}(\varepsilon^{3/2})=\frac{a_1 - a_2}{4}\varepsilon + \mathcal{O}(\varepsilon^{3/2})
	$$
	for the critical value of $\lambda$ which yields maximal canard solutions. Here, it is important to note that each maximal canard solution, restricted to $V$, is strictly separated from $\mathcal{C}_0$ provided $\rho,\lambda_0$ are small enough.
	In particular, Lemma~\ref{Lem:Slow_man} shows that $\mathcal{C}_{\varepsilon}^a,\mathcal{C}_{\varepsilon}^r$ are located below $\mathcal{C}_0$ outside of $V_{2,\varepsilon}$. But inside $V_{2,\varepsilon}$, $\gamma_{c,2}$ lies strictly below $\mathcal{C}_{0,2}$, since $x_{c,2}^2 - y_{c,2} = \frac{1}{2}$. Because maximal canard solutions restricted to $V_{2,\varepsilon}$ are perturbations of $\gamma_{c,2}$ of order $\mathcal{O}(r_2,\lambda_2,r_2(|\lambda_2|+ r_2))$, and since $r_2 = \sqrt{\varepsilon}\in [0,\rho)$, $\lambda_2\in (-\mu,\mu)$, also this perturbation lies strictly below $\mathcal{C}_{0,2}$ for $\rho,\mu$ small.
	Finally, because maximal canard solutions are just the patching of $\mathcal{C}_{\varepsilon}^a,\mathcal{C}_{\varepsilon}^r$ restricted to $V\backslash V_{2,\varepsilon}$ and the perturbation of $\gamma_{c,2}$, this proves their existence and that their restriction to $V$ is located below $\mathcal{C}_0$.
	In particular, for $\lambda = \lambda_c(\sqrt{\varepsilon})$, the attracting slow manifold $\mathcal{C}_{\varepsilon}^a$ connects to $\mathcal{C}_{\varepsilon}^r$, and both are strictly separated from and located below $\mathcal{C}_0$ in $V$.
\end{proof}

We prove the second main result:
\begin{proof}[Proof of Theorem~\ref{Thm:canard2}]
	(i) follows from Theorem~\ref{Thm:canard1}.
	
	(ii) The first part of the statement follows from Theorem~\ref{Thm:class_canard1}.
	Let $\lambda\in (\lambda_{H}(\sqrt{\varepsilon}),\lambda_{sc}(\sqrt{\varepsilon}))$.
	As in the proof of Theorem~\ref{Thm:canard1} consider any solution starting in $U_0^{-}$ and to the left of $P_-$. This solution arrives at $\mathcal{C}_{\partial}$ to the left of $P_-$ and, by Lemma~\ref{Lem:slow_flow}, above $\mathcal{C}_{\varepsilon}^a$.
	Since the trajectory behaves as a classical canard solution while contained in $\mathbb{R}^2\backslash \mathcal{C}_0$, it remains between $\Gamma_{(\lambda,\varepsilon)}$ and $\mathcal{C}_{\varepsilon}^a$ until it reenters $\mathcal{C}_0$ to the right of $P_+$.
	That the solution reenters $\mathcal{C}_0$ at all follows from Lemma~\ref{Lem:slow_flow}.
	Now consider any solution starting in $V\backslash U^{0}$, between $P_-, P_+$ and above $\tilde{\Gamma}_{(\lambda,\varepsilon)}$.
	If the solution starts in $U_0^{-}$, then we can either apply the transformation to $U_{1}^{-}$, i.e. to chart $K_1$, and use Lemma~\ref{Lem:chart1}, or the transformation to chart $K_2$ and \eqref{Eq:blowup_x_2}--\eqref{Eq:blowup_y_2}. In particular, the solution moves vertically downwards until it arrives at $\mathcal{C}_{\partial}$, still between $P_-, P_+$ and above $\tilde{\Gamma}_{(\lambda,\varepsilon)}$.
	Further on, the trajectory behaves as a classical canard solution and is - by Theorem~\ref{Thm:class_canard2} - attracted by $\tilde{\Gamma}_{(\lambda,\varepsilon)}$ from the interior, until it reenters $\mathcal{C}_{\partial}$ in $U_0^{+}$ but to the left of $P_+$.
	
	(iii) As for $\lambda\in (-\lambda_0,\lambda_{sc}(\sqrt{\varepsilon}))$, one shows that
	$\Gamma_e$ is unstable. Since there are no further equilibria contained in $V$, and because $U^{0}$ is right-curved, this proves the statement.
	
	(iv) Follows from the definitions in Section~\ref{Sec:Domains}.
\end{proof}

\begin{proof}[Proof of Theorem~\ref{Thm:canard3}]
	For $\lambda\in (\lambda_{c}(\sqrt{\varepsilon}),\lambda_0]$, considering the relative position of the attracting and the repelling slow manifold for the classical system \eqref{Eq:class_blowup_x}--\eqref{Eq:class_blowup_lamb} implies that $\mathcal{C}_{\varepsilon}^a$ does not intersect $\mathcal{C}_{\partial}$ in $V$ at all, see \cite{KruSzm2,KruSzm3} and Lemma~\ref{Lem:slow_flow} and the part of the proof of Theorem~\ref{Thm:canard1} about maximal canard solutions. In particular, $\mathcal{C}_{\varepsilon}^a\subset V\backslash\mathcal{C}_0$.
	Moreover, the repelling slow manifold in the classical case (in backward time) spirals around $p_e$ until it eventually leaves $V$.
	For \eqref{Eq:blowup_x}--\eqref{Eq:blowup_lamb}, this implies that there exists a unique point $(p_c^x,p_c^y)\in U_0^{-}\cap\mathcal{C}_{\partial}$ together with its vertical extension $P_c=\{(p_c^x,y):\, y\in \mathbb{R}\}$ such that the following holds true: 
	
	The solution starting at $(p_c^x,p_c^y)$ does not reenter $\mathcal{C}_0$ again. In particular, it leaves $V$ in $V\backslash\mathcal{C}_0$.
	Moreover, all solutions starting in $U_0^{-}\cap\mathcal{C}_{\partial}$ and to the right of $P_c$ reenter $\mathcal{C}_0$ in $U_0^{+}$, see also Lemma~\ref{Lem:slow_flow}.
	
	This implies that all trajectories starting in $V\backslash U^{0}$ and to the left of $P_c$ also leave $V$ in $V\backslash\mathcal{C}_0$, since those trajectories stay to the left and below the solution starting at $(p_c^x,p_c^y)$.
	
	Moreover, any solution starting in $U_0^{+}$ remains in this set until it leaves $V$, and any solution starting in $U_0^{-}$ and to the right of $P_c$ reaches $\mathcal{C}_{\partial}$ to the right of $P_c$, so that it reenters $\mathcal{C}_0$ in $U_0^{+}$, see Figure~\ref{Fig:LargeLambda}.
\end{proof}

\subsection{Slow manifold}
In order to determine the dynamics of trajectories for \eqref{Eq:blowup_x}--\eqref{Eq:blowup_lamb}, it is useful to know where there exist branches of the slow manifold which are separated from the critical manifold. That is, we want to know whether and where the attracting/repelling slow manifold of the smooth system \eqref{Eq:class_blowup_x}--\eqref{Eq:class_blowup_lamb} is located below the parabola $\mathcal{C}_{\partial}=\{(x,y):\, y=x^2\}$.
In particular, those branches $\mathcal{C}_\varepsilon^a$ and $\mathcal{C}_\varepsilon^r$ are still observed in system \eqref{Eq:blowup_x}--\eqref{Eq:blowup_lamb}. However, also each trajectory in $\mathcal{C}_0$ follows the slow subsystem. This provides a key difference between system \eqref{Eq:class_blowup_x}--\eqref{Eq:class_blowup_lamb} with one-dimensional critical manifold $\mathcal{C}_\partial$ and system \eqref{Eq:blowup_x}--\eqref{Eq:blowup_lamb} with critical manifold $\mathcal{C}_0$ of codimension zero.
In this section, we study $\mathcal{C}_\varepsilon^a$ and $\mathcal{C}_\varepsilon^r$ outside the domain of $K_2$, i.e. in sets of the form $\{(x,y):\, y\in (\varepsilon,\rho^2),\ x\in (-x_{1,0}\sqrt{y},x_{1,0}\sqrt{y})\}$, where Fenichel theory and asymptotic expansion techniques are applicable.
In particular, we expand $x=x_{(0)} + \varepsilon x_{(1)} + \mathcal{O}(\varepsilon^2)$ and write $y=y(x)$.
Then, we obtain due to \eqref{Eq:class_blowup_x}--\eqref{Eq:class_blowup_lamb} and the chain rule
\begin{align*}
\left(\frac{\txtd x_{(0)}}{\txtd y} + \varepsilon\frac{\txtd x_{(1)}}{\txtd y} + \mathcal{O}(\varepsilon^2)\right)\varepsilon g 
&= \frac{\txtd x}{\txtd y}\varepsilon g = \frac{\txtd x}{\txtd y}y'\\
= &x'
= - y + x^2 = -y + x_{(0)}^2 + \varepsilon 2x_{(0)}x_{(1)} + \mathcal{O}(\varepsilon^2).
\end{align*}
Recall that $g=xg_1- \lambda g_2 + yg_3$ is $\mathrm{C}^r$-smooth for $r\geq 3$ and $g_i=g_i(x,y,\lambda)$ for $i=1,2,3$, with $g_1,g_2 = 1+\mathcal{O}(x,y,\lambda)$. 
Inserting the asymptotic expansion for $x$, we obtain by a Taylor expansion
\begin{align*}
g(x,y,\lambda)&= [g_1(0,0,0) + \partial_x g_1(0,0,0)(x_{(0)} + \varepsilon x_{(1)})](x_{(0)} + \varepsilon x_{(1)})\\
& - g_2(0,0,0)\lambda + g_3(0,0,0)y + \mathcal{O}(\varepsilon^2,xy,x\lambda,\lambda^2,y^2)\\
&= [1 + a_1(x_{(0)} + \varepsilon x_{(1)})](x_{(0)} + \varepsilon x_{(1)})\\
& - \lambda + a_2y + \mathcal{O}(\varepsilon^2,xy,x\lambda,\lambda^2,y^2)\\
&= x_{(0)} + a_1 x_{(0)}^2 - \lambda + a_2y + \mathcal{O}(\varepsilon,xy,x\lambda,\lambda^2,y^2).
\end{align*}
Inserting this into the previous equation, sorting for terms of order $\mathcal{O}(1)$ and $\mathcal{O}(\varepsilon)$, and neglecting terms of order $\mathcal{O}(xy,x\lambda,\lambda^2,y^2)$ implies the conditions
$$
y=x_{(0)}^2 \Leftrightarrow x_{(0)}=\pm\sqrt{y},\qquad \text{and then}
$$
$$
\pm\frac{1}{2\sqrt{y}}(\pm\sqrt{y} + (a_1+a_2)y - \lambda)  = \pm 2\sqrt{y}x_{(1)}.
$$
Note here that $y>\varepsilon>0$. Solving for $x_{(1)}$ yields
$$
x_{(1)} = \frac{1}{4y}(\pm\sqrt{y} + (a_1+a_2)y - \lambda).
$$
Therefore,
\begin{align*}
x&= x_{(0)} + \varepsilon x_{(1)} + \mathcal{O}(\varepsilon^2)
= \left\lbrace\begin{matrix}
-\sqrt{y} + \varepsilon\frac{1}{4y}(-\sqrt{y} + (a_1+a_2)y - \lambda)\\
\sqrt{y} + \varepsilon\frac{1}{4y}(\sqrt{y} + (a_1+a_2)y - \lambda)
\end{matrix}\right.
+\mathcal{O}(\varepsilon^2).
\end{align*}
From this we obtain
\begin{align*}
x^2&= \left\lbrace\begin{matrix}
y - \varepsilon\frac{1}{2\sqrt{y}}(-\sqrt{y} + (a_1+a_2)y - \lambda)\\
y + \varepsilon\frac{1}{2\sqrt{y}}(\sqrt{y} + (a_1+a_2)y - \lambda)
\end{matrix}\right.
+\mathcal{O}(\varepsilon^2)\\
&= \left\lbrace\begin{matrix}
y + \frac{\varepsilon}{2}\left(1 - (a_1+a_2)\sqrt{y} + \frac{\lambda}{\sqrt{y}}\right)\\
y + \frac{\varepsilon}{2}\left(1 + (a_1+a_2)\sqrt{y} - \frac{\lambda}{\sqrt{y}}\right)
\end{matrix}\right.
+\mathcal{O}(\varepsilon^2).
\end{align*}
In order to obtain branches of the slow manifold which are located below $\mathcal{C}_{\partial}$, we have to check if the conditions
\begin{align*}
C < \left\lbrace\begin{matrix}
1 - (a_1+a_2)\sqrt{y} + \frac{\lambda}{\sqrt{y}},\\
1 + (a_1+a_2)\sqrt{y} - \frac{\lambda}{\sqrt{y}},
\end{matrix}\right.
\end{align*}
hold for some $C>0$, since this implies that the $\mathcal{O}(\varepsilon)$-terms are strictly positive and exceed the $\mathcal{O}(\varepsilon^2)$-terms, provided that $\varepsilon\in (0,\rho^2]$ with $\rho$ small enough.
The first expression can be estimated from below by
$$
1 - (a_1+a_2)\sqrt{y} + \frac{\lambda}{\sqrt{y}} > 1 - (a_1+a_2)\rho + \frac{\lambda}{\rho},
$$
for $\varepsilon\in (0,\rho^2]$ and the right side is strictly larger than some $C>0$ for $\lambda\in (-\lambda_0,\lambda_{c}]$, if $\rho,\lambda_0>0$ are chosen small enough.
Similarly, the second expression can be estimated from below by
$$
1 + (a_1+a_2)\sqrt{y} - \frac{\lambda}{\sqrt{y}} > 1 + (a_1+a_2)\sqrt{\varepsilon} - \frac{\lambda}{\sqrt{\varepsilon}},
$$
and also this time the right side is strictly larger than some $C>0$ for $\varepsilon\in (0,\rho^2]$ and $\lambda\in (-\lambda_0,\lambda_{c}]$, if $\rho,\lambda_0>0$ are chosen small enough. Note here, that $\lambda_{c}=\frac{a_1-a_2}{4}\varepsilon + \mathcal{O}(\varepsilon^{3/2})$.
In particular, this proves the following lemma.
\begin{lemma}\label{Lem:Slow_man}
	If $\rho,\lambda_0>0$ are chosen small enough, then for all $\varepsilon\in (0,\rho^2]$ and $\lambda\in (-\lambda_0,\lambda_{c}]$, the restrictions to $V\backslash V_{2,\varepsilon}$ of the attracting branch $\mathcal{C}_{\varepsilon}^a$ and the repelling branch $\mathcal{C}_{\varepsilon}^r$ of the slow manifold for system \eqref{Eq:class_blowup_x}--\eqref{Eq:class_blowup_lamb} are both located below the critical manifold $\mathcal{C}_{\partial}$. Here, $V=V_{\rho^2}$ is defined according to Definition~\ref{Def:nbhd}.
\end{lemma}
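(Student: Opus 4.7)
The plan is to compute an asymptotic expansion of the slow manifold to leading order in $\varepsilon$ in the regime covered by chart $K_1$ and Fenichel theory, and then check that the $\mathcal{O}(\varepsilon)$ correction pushes the slow manifold strictly into $\{y < x^2\}$. Outside $V_{2,\varepsilon}$ we have $y \geq \varepsilon$, so both $y$ and $\sqrt{y}$ stay bounded away from zero on the relevant slow-manifold scale, which is what lets the expansion be rigorous.

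First, I would apply classical Fenichel theory to the smooth system \eqref{Eq:class_blowup_x}--\eqref{Eq:class_blowup_lamb} on the normally hyperbolic portions of $\mathcal{C}_\partial$ contained in $V\setminus V_{2,\varepsilon}$: this yields $C^r$-smooth attracting and repelling slow manifolds $\mathcal{C}_\varepsilon^a,\mathcal{C}_\varepsilon^r$ that may be written locally as graphs $x = x(y;\varepsilon,\lambda)$ with $x(y;0,0) = \mp\sqrt{y}$. I would then write $x = x_{(0)} + \varepsilon x_{(1)} + \mathcal{O}(\varepsilon^2)$, substitute into the invariance equation
\[
\frac{\txtd x}{\txtd y}\,\varepsilon g(x,y,\lambda) \;=\; -y + x^2,
\]
and Taylor-expand $g = xg_1-\lambda g_2 + yg_3$ around the origin using $g_1,g_2 = 1+\mathcal{O}(x,y,\lambda)$, $\partial_x g_1(0,0,0)=a_1$, $g_3(0,0,0)=a_2$. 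Matching orders $\varepsilon^0$ forces $x_{(0)} = \pm\sqrt{y}$ (the two branches of $\mathcal{C}_\partial$); matching $\varepsilon^1$ and solving gives an explicit formula for $x_{(1)}$, from which I can read off
\[
x^2 \;=\; y + \tfrac{\varepsilon}{2}\!\left(1 \mp (a_1+a_2)\sqrt{y} \pm \tfrac{\lambda}{\sqrt{y}}\right) + \mathcal{O}(\varepsilon^2).
\]

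The key step is then to show the $\mathcal{O}(\varepsilon)$-bracket is bounded below by a positive constant uniformly on $V\setminus V_{2,\varepsilon}$ for $\lambda \in (-\lambda_0, \lambda_c]$. For $\mathcal{C}_\varepsilon^a$ (the $-\sqrt{y}$ branch), the bracket is $1-(a_1+a_2)\sqrt{y}+\lambda/\sqrt{y}$, which on $\sqrt{y}\in[\sqrt{\varepsilon},\rho]$ is at least $1-(a_1+a_2)\rho + \lambda/\rho$; since $\lambda \geq -\lambda_0$, this exceeds some $C>0$ once $\rho$ and $\lambda_0$ are small enough. For $\mathcal{C}_\varepsilon^r$ the bracket is $1+(a_1+a_2)\sqrt{y}-\lambda/\sqrt{y}$, which is bounded below by $1+(a_1+a_2)\sqrt{\varepsilon} - \lambda/\sqrt{\varepsilon}$; using $\lambda \leq \lambda_c = \tfrac{a_1-a_2}{4}\varepsilon + \mathcal{O}(\varepsilon^{3/2})$ the ratio $\lambda/\sqrt{\varepsilon} = \mathcal{O}(\sqrt{\varepsilon})$ stays small, so this bracket is also uniformly positive. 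In both cases $x^2 - y > 0$, so $y < x^2$, which places the slow manifold below $\mathcal{C}_\partial$.

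The main obstacle I anticipate is the uniformity of the asymptotic expansion as $y\downarrow \varepsilon$, i.e.\ at the boundary with $V_{2,\varepsilon}$: the coefficient $x_{(1)}$ contains a $1/y$, so naively the expansion degenerates. The point is that on $V\setminus V_{2,\varepsilon}$ one has $y > \varepsilon$, so $\varepsilon x_{(1)} = \mathcal{O}(1/\sqrt{y})$ at worst, and the remainder $\mathcal{O}(\varepsilon^2)$ from Fenichel's smooth dependence theorem remains genuinely smaller than the retained $\mathcal{O}(\varepsilon)$ term once $\rho$ is small; verifying this uniform bound — together with handling the $\lambda$-dependence in the bound for $\mathcal{C}_\varepsilon^r$ via the explicit size of $\lambda_c$ — is the essential quantitative point of the argument.
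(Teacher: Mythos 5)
Your proposal follows the paper's own argument essentially line by line: the same $\varepsilon$-expansion $x = x_{(0)} + \varepsilon x_{(1)} + \mathcal{O}(\varepsilon^2)$ inserted into the invariance relation $\frac{\txtd x}{\txtd y}\varepsilon g = -y + x^2$, the same matching at orders $\varepsilon^0$ and $\varepsilon^1$, the same formula $x^2 = y + \tfrac{\varepsilon}{2}\bigl(1 \mp (a_1+a_2)\sqrt{y} \pm \lambda/\sqrt{y}\bigr) + \mathcal{O}(\varepsilon^2)$, and the same lower bounds on the two brackets. So the approach is not merely similar, it is the same.

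One caveat you should tighten, and which is present in the paper's write-up as well: the chain of inequalities used for the attracting branch, $1 - (a_1+a_2)\sqrt{y} + \lambda/\sqrt{y} > 1 - (a_1+a_2)\rho + \lambda/\rho$, only goes the right way when $\lambda \geq 0$. For $\lambda < 0$ and $\sqrt{y} < \rho$ one has $\lambda/\sqrt{y} < \lambda/\rho$, so the displayed bound is reversed. The correct lower bound in the regime $\lambda<0$ uses the other endpoint, $\lambda/\sqrt{y} \geq \lambda/\sqrt{\varepsilon} = \lambda_2$, and then the smallness of $\lambda_2 \in (-\mu,\mu)$ coming from the blow-up domain (not the smallness of $\lambda_0$ alone) is what keeps the bracket uniformly positive on $V\setminus V_{2,\varepsilon}$. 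Since you explicitly identify $y\downarrow\varepsilon$ as the quantitative crux, this is exactly the place to check the direction of that inequality; with that repair the argument closes.
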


\textbf{Acknowledgments:} This research was supported by the DFG Collaborative Research Center TRR109, Discretization in Geometry and Dynamics. CK would also like to thank the VolkswagenStiftung for support via a Lichtenberg Professorship.

\end{document}